\def\w*lim{\mathop{\mbox{\textup{w*-lim}}}}
\newtheorem{theorem}{\sc \textbf{Theorem}}[section]
\newtheorem{lemma}[theorem]{\sc \textbf{Lemma}}
\newtheorem{corollary}[theorem]{\sc \textbf{Corollary}}
\newtheorem{proposition}[theorem]{\sc \textbf{Proposition}}
\newtheorem{remark}[theorem]{\sc \textbf{Remark}}
\newtheorem{definition}[theorem]{\sc \textbf{Definition}}
\newtheorem{question}[theorem]{\sc \textbf{Question}}
\newtheorem{notation}[theorem]{\textbf{Notation}}
\newtheorem{lem}[theorem]{\sc \textbf{Lemma}}
\newcommand{\cH}{{\mathcal H}}
\newcommand{\cM}{{\mathcal M}}
\newcommand{\cN}{{\mathcal N}}
\newcommand{\cP}{{\mathcal P}}
\newcommand{\bC}{{\mathbb C}}
\newcommand{\bM}{{\mathbb M}}
\newcommand{\bR}{{\mathbb R}}
\newcommand{\bT}{{\mathbb T}}
\newcommand{\bZ}{{\mathbb Z}}
\newcommand{\sX}{{\mathscr X}}
\newcommand{\sY}{{\mathscr Y}}
\newcommand{\fD}{{\mathfrak D}}
\newcommand{\fM}{{\mathfrak M}}
\title[Operator $\theta$-H\"{o}lder functions]{Operator $\theta$-H\"{o}lder functions with respect to $\left\|\cdot\right\|_p$, $0< p\le \infty$}
\author{J. Huang, F. Sukochev and D. Zanin}
\begin{document}
%\tableofcontents

\maketitle
\begin{abstract}
Let $\theta \in(0,1)$ and $(\cM,\tau)$ be a semifinite von Neumann algebra.
We consider the function spaces  introduced by Sobolev \cite{Sobolev,Sobolev17} (denoted by $S_{d,\theta}$),
showing  that there exists a constant $d>0 $ depending on $p$, $0<p\le \infty$,  only such that
 every  function  $f:\bR\rightarrow \bC \in S_{d,\theta}$
is  operator $\theta$-H\"older with respect to $\left\|\cdot \right\|_p$, that is,  there exists a constant $C_{p,f}$ depending on $p$ and $f$ only such that the estimate
$$\left\|f(A) -f(B)\right\|_p \le C_{p,f}\left\| \left| A-B \right|^\theta \right \|_p $$
holds
for arbitrary self-adjoint $\tau$-measurable operators $A$ and $ B$.
In particular, we obtain a sharp condition such that  a  function $f$ is  operator $\theta$-H\"older with respect to all quasi-norms $\left\|\cdot \right\|_p$,  $0<p\le \infty$,
which
 complements the results on the case for  $ \frac1\theta  < p<\infty $ by Aleksandrov and Peller \cite{AP2}, and the case  when $p=\infty$ treated  by  Aleksandrov and Peller \cite{AP3}, and by  Nikol$^\prime$skaya and Farforovskaya \cite{NF}.
 As an application, we show that this class of functions is operator $\theta$-H\"older with respect to a wide class of symmetrically quasi-normed operator spaces affiliated with $\cM$,  which  unifies the results on specific functions due to Birman, Koplienko  and  Solomjak \cite{BKS,BS89},  Bhatia \cite{Bhatia},  Ando \cite{Ando},  and Ricard   \cite{Ricard, Ricard2} with significant extension.
In addition,
when $\theta>1$,
we
obtain a reverse of the  Birman--Koplienko--Solomjak inequality, which extends a couple of existing results on fractional powers $t\mapsto t^\theta$ by Ando et al.
\end{abstract}

\section{Introduction}\label{intro}
Let $0<\theta<1$.
A function $f:\bR \rightarrow \bC$ is called a \emph{$\theta$-H\"older function} if it satisfies the inequality
\begin{align}\label{conH}
|f(x)-f(y)|\le {\rm const}  |x-y|^\theta, ~x,y\in \bR.
\end{align}
  Let $\cM$ be a semifinite von Neumann algebra equipped with a semifinite faithful normal trace $\tau$.
  We denote by $S(\cM,\tau)$ the $*$-algebra  of all $\tau$-measurable operators affiliated with $\cM$.
The main object of the present paper is the so-called  \emph{operator $\theta$-H\"older} (or \emph{operator-H\"older of order $\theta$} \cite{ST}) functions  $f$  with respect to
$\left\|\cdot \right\|_p$, where $\left\|\cdot \right\|_p$ stands for the quasi-norm of
the
 non-commutative $L_p$-space $L_p(\cM,\tau)$, $0< p\le \infty$ \cite{PX}.
 That is, for a  given $p>0$, there exists a constant $C(p,f)$ depending on $p$ and $f$ (and, obviously, on $\theta$) %\footnote{Since  $f$ must be  a $\theta$-H\"older function on $\bR$, whenever a constant depends on $f$, it  depends on  $\theta$ as well.}
  only such that
\begin{align}\label{def:opH}
\|f(X)-f(Y)\|_{p} \le C(p,f) \left\|\left|X-Y \right|^
\theta \right\|_p
\end{align}
for an arbitrary semifinite von Neumann algebra $\cM$ and any self-adjoint operators $X,Y\in S(\cM,\tau)$.

The principal results of this paper were motivated by the following question:
%
% We introduce a class of functions which are operator $\theta$-H\"older with respect to   all quasi-norms $\left\|\cdot\right\|_p$, $p>0$.
%  Precisely, we  consider the following question.
 \begin{question} \label{Q:1.1}
 Let $\theta\in (0,1)$.
 What is the class of functions $f:\bR \rightarrow \bC$ such that for an arbitrary   $p\in (0,\infty ] $, there exists  a constant  $ C(p,f)$ depending on $p$ and $f$ only (independent of $\cM$)  such that
 $$\|f(X) -f(Y)\|_p \le C(p,f)\left\||X-Y|^\theta \right\|_p, ~\forall X=X^*,Y=Y^*\in  S(\cM,\tau)?$$
 \end{question}

This question has a deep history in operator theory.
The starting point is the so-called Powers-St${\o}$rmer inequality \cite{Powers_S} (see also \cite{Bhatia,Hemmen_A,Kittaneh}):
 $$
\left\|X^\frac 12 -Y^\frac 12 \right\|_2 \le \left\||X-Y|^\frac 12 \right\|_2, ~X,Y \in B(\cH)^+,
$$
where $B(\cH)^+$ stands for the positive part of the  $*$-algebra $B(\cH)$ of all bounded linear operators on a Hilbert space $\cH$.
A remarkable extension of the above inequality  is due to  Birman, Koplienko  and  Solomjak by using double operator integrals techniques \cite{BKS} (an alternative proof was given in \cite{PS1}):
 if $0<\theta<1$, $f(t):=|t|^\theta$, $t\in \bR$ and  $\left\|\cdot\right\|$ is an arbitrary fully symmetric norm on $B(\cH)$, then
 \begin{align}\label{eq:1}
\left\|X^\theta -Y^\theta \right\| \le \left\||X-Y|^\theta \right\|, ~X,Y \in B(\cH)^+.
\end{align}

After that, many mathematicians enlarged the classes of functions $f$ or the (quasi)-norms for which \eqref{eq:1} holds.
 There is a vast literature concerned with problems of this type, with a large number of deep results (see e.g. \cite{AP1,AP2,AP3,AP4,APPS,Ando,PS2,Ricard,DD1995,CPSZ,KPSS,AP11,AZ}).
In particular, Ando \cite{Ando} replaced the function $t\rightarrow t^\theta$ with
  any non-negative \emph{operator monotone function} $f:\bR^+ \to \bR^+$ (classic examples are $t\mapsto t^\theta$, $0< \theta<1$, $t\mapsto \log(t+1)$ and $t\mapsto \frac{t}{r+t}$, $r>0$),  showing that
$$
\left\|f(X)-f(Y)\right\| \le \left\| f(|X-Y|) \right\|, ~X,Y\in B(\cH)^+ ,
$$
for any fully symmetric norm $\left\|\cdot\right\|$ on $B(\cH)$.
This result  was later extended by P. Dodds and T. Dodds \cite{DD1995} to the case of fully symmetric spaces affiliated with a semi-finite von Neumann algebra (see also \cite{AZ,Ando_H,Hemmen_A} for related topics on operator monotone functions).
Kosaki \cite{Hiai_N} proved \eqref{eq:1}   for the Haagerup $L_p$-spaces, $p\ge 1$ (see also \cite[Proposition 7]{Kosaki1984}).

If $\theta=1$,
then operator $\theta$-H\"older functions are the so-called \emph{operator-Lipschitz} functions.
It is well-known \cite{Far,Kato} that a Lipschitz function on the real line is not necessarily \emph{operator-Lipschitz} with respect to $\left\|\cdot\right\|_\infty$, i.e., the condition
$$|f(x) -f(y)|\le {\rm const} |x-y|, ~x,y\in \bR,$$
does not imply the that estimate
$$\|f(A)-f(B)\|_\infty \le {\rm const}' \|A-B\|_\infty$$
holds for any   self-adjoint operators $A,B\in B(\cH)$.
Davies \cite{Davies} showed that the absolute value function $t\mapsto |t|$ is not \emph{operator-Lipschitz} for $\left\|\cdot\right\|_1$.
However, it has been shown recently by Potapov and Sukochev  \cite{PS2}   that  every Lipschitz function is necessarily operator-Lipschitz in the Schatten--von Neumann ideal  $S_p$ (or noncommutative $L_p$-space) for any $1<p<\infty$ (see \cite{CMPS} for the best constant for operator Lipschitz functions).
A class of operator-Lipschitz functions for Schatten--von Neumann ideal $S_p$ for $p<1$ has been identified in \cite{MS20}.

An interesting  extension of \eqref{eq:1} was obtained recently by Aleksandrov and Peller \cite{AP2,AP3} (the case for $\left\|\cdot\right\|_\infty$ was obtained also by Nikol$^\prime$skaya and Farforovskaya  \cite{NF}).
They showed that the situation changes dramatically if we consider $\theta$-H\"older functions instead of Lipschitz functions, i.e., for any $0<\theta <1$,
  condition \eqref{conH} %$$|f(x)-f(y)|\le {\rm const}  |x-y|^\theta, ~x,y\in \bR,$$
  implies that
for every  $1< p\le \infty$ and any self-adjoint $X,Y\in B(\cH)$, one has
$$\|f(X)-f(Y)\|_{p/\theta } \le C_{p,f} \|X-Y\|_p^\theta,  $$
with  $C_{p,f}$ depending on $p$ and $f$ only (see \cite{APPS} for the case of normal operators in $\left( B(\cH), \left\|\cdot \right\|_\infty \right)$).
Equivalently, every $\theta$-H\"older function $f$ on $\bR$ is necessarily operator $\theta$-H\"older  with respect to $\left\|\cdot\right\|_p$ provided that  $p>\frac1\theta$, that is,
$$ \|f(X)-f(Y)\|_p \le C_{p,f}\left\|| X-Y|^\theta\right\|_p. $$
However,
a $\theta$-H\"older   function $f$   is not necessarily  operator $\theta$-H\"older with respect to $\left\|\cdot\right\|_p$,
when $p=\frac1\theta$ (see \cite[Section 9]{AP2}).
Very little is known for the case of $\left\|\cdot\right\|_p$ when $p\le \frac1\theta$, and  for other symmetric quasi-norms.
%When $  p\le \frac1\theta$ (or for the case of more general quasi-normed spaces),
%very little on operator $\theta$-H\"older functions is known.
%In the present paper, we  consider the general case when $  p>0$.
Birman and Solomjak \cite[Proposition 4.9]{BS89} obtained a result for weak $L_p$-spaces, showing  that for every  $0< p< \infty$, $0<\theta<1$ and any positive  $X,Y\in B(\cH)$, one has
$$\left\| X^\theta-Y^\theta \right\|_{p,\infty } \le C_{p,\theta} \left\| |X-Y|^\theta  \right\|_{p,\infty },  $$
with  $C_{p,\theta}$ depending on $p$ and $\theta$ only.

For the general case of quasi-norms $\left\|\cdot\right\|_p$, $p>0$,
 a weak-type result (in terms of quasi-commutator estimates) was recently obtained by  Sobolev  \cite{Sobolev}.
  He introduced a  class (of $\theta$-H\"{o}lder functions)
$$S_{d,\theta} := \Big\{f\in C^d(\bR \backslash \{0\})\cap C(\bR): ~ \|f\|_{S_{d,\theta}} :=\max_{0\le k \le d } \sup _{x\ne 0}|f^{(k)}(x)||x|^{-\theta +k}<\infty \Big\},$$
which also plays a crucial  role in the study of Wiener--Hopf operators \cite{Sobolev17,LSS}.
Note that all functions in $S_{d,\theta}$ are $\theta$-H\"{o}lder when $d\ge 1$ (see \cite[(2.3)]{Sobolev17} or
Lemma \ref{Lemmaholdertrivial}).
Sobolev considered $f\in S_{d,\theta},$  supported on a compact  interval $[-r,r]$ and proved the inequality
\begin{align}\label{eq:so}
\|f(A)-f(B)\|_\mathfrak{S} \le C_{d,f,\sigma,\theta,r}\left\| \left|A-B\right|^\sigma\right\|_\mathfrak{S},\quad A=A^{\ast},B=B^{\ast}\in B(\cH)
\end{align}
for $\sigma<\theta$ such that $(d-\sigma)^{-1} <p \le 1$ and for every symmetrically $p$-normed ideal $(\mathfrak{S},\|\cdot\|_{\mathfrak{S}})$ of $B(\cH)$.  However, since $\sigma$ is strictly smaller than $\theta,$  $C_{d,f,\sigma,\theta,r}\rightarrow \infty$ as $\sigma\rightarrow\theta,$ and since the function
 $f:\bR \rightarrow \bR, ~ t \mapsto |t|^\theta$  has unbounded support, until recently it remained unknown  whether the classic $\theta$-H\"older function $t \mapsto |t|^\theta$ is operator $\theta$-H\"older (i.e. \eqref{def:opH}) even for Schatten $p$-class, $0<p<1$.

A recent breakthrough is due to
Ricard \cite{Ricard}, who established \eqref{eq:1} for  noncommutative $L_p$-spaces affiliated with $\cM$, $0<p\le \infty$.
Precisely, Ricard \cite{Ricard} proved that there exists a constant $C_{p,\theta}$ depending on $p$ and $\theta$ only such that
\begin{align}\label{eq:2}
\left\||X|^\theta -|Y|^\theta \right\|_p \le C_{p,\theta } \left\||X- Y|^{\theta}\right \|_p, ~X=X^*,Y=Y^*\in S(\cM,\tau)
\end{align}
 (see also \cite{DD1995,Ando} for the case when $p\ge 1$).
This result demonstrates that
$f:t\mapsto|t|^\theta$ provides a non-trivial positive (partial) answer to
Question \ref{Q:1.1}.
However, the argument in \cite{Ricard} heavily rests on the homogeneity of fractional power functions, which does not seem to extend to more general functions.

For the sake of convenience, we denote  $$S_{\infty,\theta} := \cap_{0\le d<\infty} S_{d,\theta} .$$
In particular, all functions in $S_{\infty,\theta}$ are $\theta$-H\"{o}lder (note that $\sup_{d\ge 0}\left\| f\right\|_{S_{d,\theta}}$ is not necessarily finite).
It is immediate that  $t\mapsto |t|^\theta \in S_{ \infty,\theta}$.
We show that the class $S_{\infty,\theta}$ generated by the function spaces $S_{d,\theta}$ introduced in \cite{Sobolev} provides a  condition such that Question \ref{Q:1.1} has an affirmative answer.
 The following is the main result of the present paper,
which complements the results in \cite{AP2,AP3} (see also \cite{NF} and \cite{APPS}; note that these results do not require differentiability imposed on the functions) for  Schatten $p$-classes when $p> \frac1\theta$,  and extends those in  \cite{Ricard,Ricard2,BKS,BS89}.
\begin{theorem}
Let $\theta\in (0,1)$.
Then, for every $p>0$, there exists  a constant  $d=d(p)$ such that every function $f\in S_{d,\theta}$ is  operator $\theta$-H\"older with respect to $\left\|\cdot \right\|_p$.
In particular,
 every function   $f \in S_{\infty,\theta}$
 is  operator $\theta$-H\"older with respect to $\left\|\cdot \right\|_p$ for  arbitrary $p>0$, that is, for $p>0$ and $0<\theta<1$, there exists a constant $C_{p,\theta}$ such that
\begin{align}\label{eq:4}
\left\|f(X) -f(Y)\right\|_{p} \le C_{p, \theta } \left\|f \right\|_{S_{d,\theta}} \left\||X- Y|^{\theta}\right \|_{p} ~X=X^* ,Y=Y^* \in S(\cM,\tau).
\end{align}
\end{theorem}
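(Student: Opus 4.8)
The plan is to deduce \eqref{eq:4} from Ricard's inequality \eqref{eq:2} together with operator-Lipschitz estimates for smooth functions, after a Littlewood--Paley splitting of $f$; so throughout I assume the reductions below. Splitting $f$ into real and imaginary parts it suffices to treat $f$ real. A standard truncation (replacing $X$ by $XE_X([-N,N])$, etc.) together with a Fatou-type passage to the limit reduces matters to \emph{bounded} self-adjoint $X,Y$; and since $t\mapsto f(Rt)$ again lies in $S_{d,\theta}$ with $\|f(R\cdot)\|_{S_{d,\theta}}=R^{\theta}\|f\|_{S_{d,\theta}}$, one may rescale so that $\|X\|,\|Y\|\le 1$ without changing the constant in \eqref{eq:4}. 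When $p>\tfrac1\theta$, \eqref{eq:4} holds for every $\theta$-H\"older $f$ (in particular for $f\in S_{1,\theta}$, by Lemma \ref{Lemmaholdertrivial}) by Aleksandrov--Peller \cite{AP2,AP3}, with no smoothness needed. When $1<p\le\tfrac1\theta$ there is a short route: write $f=F\circ g$ with $g(t):=t^{\langle\theta\rangle}:=|t|^{\theta}\sgn t$ (a homeomorphism of $\bR$) and $F:=f\circ g^{-1}$; the bound $|f'(t)|\le\|f\|_{S_{d,\theta}}|t|^{\theta-1}$ forces $|F'|\le\theta^{-1}\|f\|_{S_{d,\theta}}$, so $F$ is Lipschitz and hence operator-Lipschitz with respect to $\|\cdot\|_p$, $p>1$, by Potapov--Sukochev \cite{PS2}, while $g$ (being homogeneous of degree $\theta$) satisfies \eqref{eq:2}; composing gives \eqref{eq:4}. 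Thus the genuinely new range is $0<p\le 1$.

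For $0<p\le 1$, fix $\phi\in C^\infty(\bR)$ with $\supp\phi\subset\{\tfrac12\le|t|\le 2\}$ and $\sum_{j\in\bZ}\phi(2^{-j}t)=1$ for $t\ne0$, and set $f_j:=f\cdot\phi(2^{-j}\cdot)$, so $f=\sum_j f_j$ on $\bR\setminus\{0\}$. From $|f^{(k)}(t)|\le\|f\|_{S_{d,\theta}}|t|^{\theta-k}$ for $0\le k\le d$ one gets $\|f_j^{(k)}\|_\infty\le C_k\|f\|_{S_{d,\theta}}2^{j(\theta-k)}$, hence the rescaled pieces $\sigma_j(t):=2^{-j\theta}f_j(2^{j}t)$ are supported in $\{\tfrac12\le|t|\le2\}$ and obey $\|\sigma_j\|_{C^{d}}\le C_d\|f\|_{S_{d,\theta}}$ \emph{uniformly in $j$}. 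Using $\|A+B\|_p^p\le\|A\|_p^p+\|B\|_p^p$ and $f_j(X)=2^{j\theta}\sigma_j(2^{-j}X)$,
\[\|f(X)-f(Y)\|_p^p\ \le\ \sum_{j\in\bZ}2^{j\theta p}\,\big\|\sigma_j(2^{-j}X)-\sigma_j(2^{-j}Y)\big\|_p^p .\]
Each summand I would bound two complementary ways: (i) since $\sigma_j$ is, uniformly in $j$, a compactly supported $C^{d(p)}$ function, the operator-Lipschitz calculus available in $L_p(\cM,\tau)$ for sufficiently smooth symbols when $0<p\le1$ (cf. \cite{MS20,PS1}) gives, in localized form, $\|\sigma_j(2^{-j}X)-\sigma_j(2^{-j}Y)\|_p\lesssim_{p}\|f\|_{S_{d,\theta}}\,\|2^{-j}(X-Y)\,r_j\|_p$, where $r_j$ is the spectral projection of $X$ (or $Y$) onto a fixed dilate of $\{2^{j-1}\le|t|\le 2^{j+1}\}$; (ii) trivially $\|\sigma_j(2^{-j}X)-\sigma_j(2^{-j}Y)\|_p\le 2\|\sigma_j\|_\infty\big(\|E_X(A_j)\|_p+\|E_Y(A_j)\|_p\big)$ with $A_j:=\{2^{j-1}\le|t|\le 2^{j+1}\}$. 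Inserting the minimum of (i) and (ii) (and the symmetric terms), one is reduced to proving an inequality of the shape
\[\sum_{j\in\bZ}2^{j\theta p}\,\min\!\big(\,2^{-jp}\,\|(X-Y)r_j\|_p^p\ ,\ \|E_X(A_j)\|_p^p\,\big)\ \lesssim_{p,\theta}\ \||X-Y|^{\theta}\|_p^p .\]

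The last inequality is the heart of the matter, and the step I expect to be hardest. On scales $2^{j}$ small relative to the local size of $X-Y$ the first entry of the minimum dominates, and the factor $2^{j(\theta-1)p}$ turns the corresponding partial sum into a convergent geometric series --- this is exactly where $\theta<1$ enters, just as in Ricard's homogeneous argument; on the large scales the second entry dominates and $\sum_j 2^{j\theta p}\|E_X(A_j)\|_p^p$ is summed against the spectral distribution of $X$. Splitting at the scale where $2^{-j}|X-Y|\sim 1$ and running a (non-commutative, Besov-flavoured) layer-cake estimate should bound the total by a constant times the $p\theta$-th moment of $|X-Y|$, i.e. by $\||X-Y|^{\theta}\|_p^p$; this refines the summation of \cite{AP2} and relies crucially on the \emph{uniform} $C^{d(p)}$-control of the $\sigma_j$, which membership in $S_{d,\theta}$ delivers but a generic $\theta$-H\"older function does not --- whence the failure of \eqref{eq:4} at $p=\tfrac1\theta$ in the absence of smoothness. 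Two ancillary points are routine but not automatic: the localized operator-Lipschitz estimate for $\|\cdot\|_p$, $p<1$, which is precisely what forces $d=d(p)\to\infty$ as $p\to0$ (the known operator-Lipschitz classes for $L_p(\cM,\tau)$, $p<1$, require smoothness of order comparable to $1/p$), and the truncation/Fatou step passing from bounded to arbitrary $\tau$-measurable $X,Y$. Putting everything together gives \eqref{eq:4}, and the final assertion follows at once from $S_{\infty,\theta}\subset S_{d(p),\theta}$.
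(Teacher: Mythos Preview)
Your overall architecture (Littlewood--Paley decomposition of $f$, uniform $C^{d}$ control of the rescaled pieces $\sigma_j$, and $p$-triangle inequality) is reasonable and close in spirit to the paper's dyadic scheme. The reductions for $p>1$ are fine; the composition trick $f=F\circ g$ with $g(t)=|t|^\theta\sgn t$ for $1<p\le 1/\theta$ is a nice shortcut, though you are tacitly using Ricard's estimate for the \emph{odd} power $g$, not just for $|t|^\theta$ as in \eqref{eq:2}.

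The genuine gap is in the ``heart of the matter''. Your localized Lipschitz bound (i) can indeed be justified via the double-operator-integral identity $\sigma_j(A)-\sigma_j(B)=T^{A,B}_{\fD\sigma_j}(A-B)$ together with the support property of $\fD\sigma_j$, yielding control by $\|r_j^A(A-B)\|_p+\|(A-B)r_j^B\|_p$. But the summation inequality you then assert,
\[
\sum_{j\in\bZ}2^{j(\theta-1)p}\min\big(\|r_j^X(X-Y)\|_p^p,\;2^{jp}\|E_X(A_j)\|_p^p\big)\ \lesssim\ \||X-Y|^\theta\|_p^p,
\]
is not proved, and the heuristic ``split at the scale where $2^{-j}|X-Y|\sim1$'' only makes sense when $X-Y$ lives at a single scale. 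When $X-Y$ has singular values spread across many scales, there is no single crossover index $j$, and the projections $r_j^X=E_X(A_j)$ bear no a priori relation to the spectral scales of $X-Y$; a direct ``Besov-flavoured layer-cake'' does not obviously close.

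This is exactly the difficulty the paper circumvents by \emph{not} summing directly. Instead it first reduces to the elementary case $A-B=xe$ (a single $\tau$-finite projection times a scalar), where $|A-B|$ genuinely has one scale and the scalar inequality $\sum_{l}2^{ql(1-\theta)}\min\{\alpha,2^{1-l}\}^q\lesssim\alpha^{q\theta}$ (Proposition~\ref{5.5}) handles the sum; then it passes to finite spectral sums $A-B=\sum_k x_ke_k$ via the $p$-triangle inequality (Lemma~\ref{final lemma}), and finally to general $A-B$ by approximation and Fatou. Your scheme is missing an analogue of this single-scale reduction, and without it the key summation step remains open.
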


As an application of \eqref{eq:4},
  we obtain a submajorization inequality for general self-adjoint $\tau$-measurable operators $X $ and $Y$, i.e., there exists  a  constant $C_{p,\theta}$ depending only on $p$ and $\theta$ such that for any $X=X^*, Y=Y^*\in S(\cM,\tau)$, we have  (see Theorems \ref{th:subm} and \ref{th:mainE})
\begin{align}\label{intro_subm}
\left(\mu\left( f(X) -f(Y) \right) \right)^p  \prec \prec C_{p,\theta}\|f\|_{S_{d,\theta}}^p \cdot \mu(|X-Y|^\theta)^p.
\end{align}
Here, $\prec\prec$ denotes submajorisation in the sense of Hardy--Littlewood--Polya and $\mu(\cdot)$ is the generalised singular value function \cite{DPS,LSZ}.
The estimate \eqref{intro_subm} extends results in \cite{BKS,Ando,DD1995}.

The majority
of Banach symmetric spaces used in analysis are fully symmetric rather than just symmetric.
 Moreover, a wide class of symmetric $p$-normed spaces can be constructed from  the $1/p$-th power of   symmetriclly normed spaces \cite{Kalton_S,DDS2014} (e.g., $\left\|\cdot\right\|_{p,\infty}$ is equivalent to the $
\frac{1}{p+\varepsilon}$-power of a fully symmetric norm for any $\varepsilon>0$ \cite[Chapter 4, Lemma 4.5]{Bennett_S}).
Denote by $E(\cM,\tau)^{(p)} $ the $1/p$-power of a fully symmetric space  $E(\cM,\tau)$ affiliated with $\cM$ \cite{DDS2014}.
That is, $E(\cM,\tau)^{(p)} $ is the class of operators $X\in S(\cM,\tau)$ with $|X|^{p}\in E(\cM,\tau)$ and $\left\|X \right\|_{E^{(p)}}  = \left\| |X|^p \right\|_E^{1/p}$.
We obtain the following corollary.
\begin{corollary}\label{intr_main}
Let $\theta\in (0,1)$ and $p\in (0,\infty)$. Then, there exists a constant  $d=d(p)$ such that  every function   $f \in S_{d,\theta} $
is operator $\theta$-H\"older with respect to $\left\|\cdot \right\|_{E^{(p)}}$, i.e.,
there exists a constant $C_{p,\theta}$ such that
for any  self-adjoint $X,Y\in S(\cM,\tau)$ with  $X-Y\in E^{(p)}(\cM,\tau)$, we have
\begin{align}\label{ineq:fep}
\left\|f(X) -f(Y)\right\|_{E^{(p)}} \le C_{p, \theta } \|f\|_{S_{d,\theta}} \left\||X- Y|^{\theta}\right \|_{E^{(p)}} .
\end{align}
\end{corollary}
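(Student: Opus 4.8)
The plan is to obtain Corollary~\ref{intr_main} as a soft consequence of the submajorisation inequality \eqref{intro_subm} (Theorems~\ref{th:subm} and \ref{th:mainE}) together with the defining feature of a fully symmetric space, namely that its closed unit ball is invariant under Hardy--Littlewood--Polya submajorisation (see, e.g., \cite{DPS,LSZ}). Recall that, by definition \cite{DDS2014}, $X\in E^{(p)}(\cM,\tau)$ means $|X|^{p}\in E(\cM,\tau)$, with $\|X\|_{E^{(p)}}=\bigl\||X|^{p}\bigr\|_{E}^{1/p}$; since $\|\cdot\|_{E}$ depends only on the generalised singular value function, this reads $\|X\|_{E^{(p)}}=\bigl\|\mu(X)^{p}\bigr\|_{E}^{1/p}$, and I adopt the convention $\|X\|_{E^{(p)}}=\infty$ when $\mu(X)^{p}\notin E(\cM,\tau)$. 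Fix $p\in(0,\infty)$ and let $d=d(p)$ be the value supplied by \eqref{intro_subm}; this $d$ depends only on $p$ (and $\theta$) and not on $E$, because the submajorisation in \eqref{intro_subm} already controls every symmetric functional of the relevant $p$-homogeneous type simultaneously.

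Now let $f\in S_{d,\theta}$ and $X=X^{*},Y=Y^{*}\in S(\cM,\tau)$; since $f$ is continuous, $f(X)-f(Y)\in S(\cM,\tau)$, so $\mu(f(X)-f(Y))$ makes sense. The first step is to apply \eqref{intro_subm}, which gives a constant $C_{p,\theta}$, depending only on $p$ and $\theta$, with
\[
\bigl(\mu(f(X)-f(Y))\bigr)^{p}\prec\prec C_{p,\theta}\,\|f\|_{S_{d,\theta}}^{p}\,\mu(|X-Y|^{\theta})^{p}.
\]
For the second step, if $\mu(|X-Y|^{\theta})^{p}\notin E(\cM,\tau)$ the right-hand side of \eqref{ineq:fep} is $+\infty$ and there is nothing to prove, so assume $\mu(|X-Y|^{\theta})^{p}\in E(\cM,\tau)$; then the right-hand function above also lies in $E(\cM,\tau)$, and the invariance of the unit ball of the fully symmetric space $E(\cM,\tau)$ under $\prec\prec$ upgrades the displayed submajorisation to
\[
\bigl\|\mu(f(X)-f(Y))^{p}\bigr\|_{E}\le C_{p,\theta}\,\|f\|_{S_{d,\theta}}^{p}\,\bigl\|\mu(|X-Y|^{\theta})^{p}\bigr\|_{E},
\]
in particular $f(X)-f(Y)\in E^{(p)}(\cM,\tau)$. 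The third step is purely formal: raising to the power $1/p$ and using $\|Z\|_{E^{(p)}}=\|\mu(Z)^{p}\|_{E}^{1/p}$ gives $\|f(X)-f(Y)\|_{E^{(p)}}\le C_{p,\theta}^{1/p}\|f\|_{S_{d,\theta}}\bigl\||X-Y|^{\theta}\bigr\|_{E^{(p)}}$, which is \eqref{ineq:fep} once the exponent is absorbed into the constant.

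I do not expect a genuine obstacle beyond bookkeeping: given \eqref{intro_subm}, the passage to an arbitrary $1/p$-power $E^{(p)}$ of a fully symmetric $E$ uses only the order structure of symmetric operator spaces and the identity $\|\cdot\|_{E^{(p)}}=\|\mu(\cdot)^{p}\|_{E}^{1/p}$. The two points worth a word of care are: (i) adopting the convention for $\|\cdot\|_{E^{(p)}}$ above so that \eqref{ineq:fep} is literally valid in $[0,\infty]$ for all self-adjoint $X,Y$ --- the domain hypothesis $X-Y\in E^{(p)}(\cM,\tau)$ is then a harmless normalisation, and one records separately that $f(X)-f(Y)\in E^{(p)}(\cM,\tau)$ whenever $|X-Y|^{\theta}\in E^{(p)}(\cM,\tau)$; and (ii) noting, as above, that the value $d=d(p)$ can be chosen uniformly over the class of fully symmetric spaces $E$. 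All of the analytic depth resides in \eqref{intro_subm}, which is assumed here.
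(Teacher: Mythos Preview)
Your proposal is correct and follows essentially the same route as the paper: the paper derives the submajorisation \eqref{intro_subm} from Theorem~\ref{th:subm} (with the $L_p$ input from Theorem~\ref{th:main} and the $L_\infty$ input from \cite{AP3}) and then declares Theorem~\ref{th:mainE}, which is the same statement as Corollary~\ref{intr_main}, to be ``an immediate consequence of this inequality''; your argument simply spells out that immediate step via the definition of $\|\cdot\|_{E^{(p)}}$ and the full symmetry of $E$. One minor remark: citing Theorem~\ref{th:mainE} as a \emph{source} of \eqref{intro_subm} is slightly circular, since \ref{th:mainE} is the very statement you are proving --- the submajorisation \eqref{intro_subm} is established in the text of Section~\ref{sec:symm} just \emph{before} Theorem~\ref{th:mainE}, as a consequence of Theorem~\ref{th:subm} together with Theorem~\ref{th:main} and \cite[Theorem~4.1]{AP3}.
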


This corollary  extends results in \cite{BKS}, \cite{BS89} \cite{Ricard2} and \cite{Ricard} in two directions.
Firstly, instead of considering only noncommutative $L_p$-spaces,
we prove that \eqref{eq:2} holds true for a very wide class of quasi-Banach symmetric spaces.
 Secondly, we extend significantly the class of functions $f$ for which the result is applicable.
 In particular, letting $f(t)=|t|^{\theta}$ and $p=1$ in \eqref{ineq:fep},  we obtain \eqref{eq:1}; letting $E(\cM,\tau) =L_1(\cM,\tau)$, we obtain  \eqref{eq:2}.
Indeed, the space $S_{
\infty,\theta}$ embraces a wide class of functions, e.g.,
   Schwartz class functions and  classic operator monotone functions such as $t\mapsto {\rm sgn}(t)\log(|t|+1)$ and $t\mapsto \frac{t}{r+|t|}$, $t>0$.
As an application of \eqref{intro_subm}, we obtain the inequality for  fractional powers when  $\theta >1$ (see Corollaries \ref{cor:inv} and \ref{cor:>1}), extending existing results for fully symmetric ideals on $B(\cH)$ in \cite{Bhatia,Ando,Hemmen_A}.

Finally, we obtain the estimates for  absolute value map, commutators and quasi-commutators, extending results in \cite{Bhatia1988}, \cite{Ricard} and \cite{Kittaneh_K,Kosaki1984}.
In contrast with the recent result by Sobolev \cite[Theorem 2.4]{Sobolev},
the compactness of the supports of the functions imposed in \eqref{eq:so} is no longer required %in Corollary \ref{intr_main}
and
 we can also treat  the case when $\theta =\sigma$.
 % which is in strong contrast with the recent result by Sobolev \cite[Theorem 2.4]{Sobolev}.
Comparing with the constant $d$ obtained in \cite{Sobolev}, the constant $d(p)$ obtained in our paper is sharp (see Remark \ref{7.4}),   which nicely complements \cite[Theorem 2.4]{Sobolev}.
The main result presented in this paper  could be used to provide an alternative way to prove  main results in \cite{Sobolev17} and \cite{LSS} even for unbounded domains $\Lambda$ in $\bR^d$.
To keep this paper to a reasonable length, this application will be published separately.

\section{Preliminaries}\label{s:p}

In this section, we recall main notions from the theory of noncommutative integration and  recall some properties of the  generalised singular value function.
In what follows,  $\cH$ is a  Hilbert space and $B(\cH)$ is the
$*$-algebra of all bounded linear operators on $\cH$, and
$\mathbf{1}$ is the identity operator on $\cH$.
Let $\mathcal{M}$ be
a von Neumann algebra represented on $\cH$.
For details on von Neumann algebra
theory, the reader is referred to e.g. \cite{Dixmier}.
%or \cite{Tak}.
General facts concerning measurable operators may
be found in \cite{Nelson}, \cite{Se} (see also %\cite[Chapter IX]{Ta2} and
the forthcoming book \cite{DPS}).
%For convenience of the reader, some of the basic definitions are recalled.
\subsection{$\tau$-measurable operators and generalised singular values}

A linear operator $X:\mathfrak{D}\left( X\right) \rightarrow \cH $,
where the domain $\mathfrak{D}\left( X\right) $ of $X$ is a linear
subspace of $\cH$, is said to be {\it affiliated} with $\mathcal{M}$ (denoted by $X\eta \cM$)
if $YX\subseteq XY$ for all $Y\in \mathcal{M}^{\prime }$, where $\mathcal{M}^{\prime }$ is the commutant of $\mathcal{M}$.
A linear
operator $X:\mathfrak{D}\left( X\right) \rightarrow \cH $ is termed
{\it measurable} with respect to $\mathcal{M}$ if $X$ is closed,
densely defined, affiliated with $\mathcal{M}$ and there exists a
sequence $\left\{ P_n\right\}_{n=1}^{\infty}$ in the lattice of all
projections of $\mathcal{M}$, $\cP\left(\mathcal{M}\right)$, such
that $P_n\uparrow \mathbf{1}$, $P_n(\cH)\subseteq\mathfrak{D}\left(X\right) $
and $\mathbf{1}-P_n$ is a finite projection (with respect to $\mathcal{M}$)
for all $n$. It should be noted that the condition $P_{n}\left(
\cH\right) \subseteq \mathfrak{D}\left( X\right) $ implies that
$XP_{n}\in \mathcal{M}$. The collection of all measurable
operators with respect to $\mathcal{M}$ is denoted by $S\left(
\mathcal{M} \right) $, which is a unital $\ast $-algebra
with respect to strong sums and products (denoted simply by $X+Y$ and $XY$ for all $X,Y\in S\left( \mathcal{M}\right) $).

Let $X$ be a self-adjoint operator affiliated with $\mathcal{M}$.
We denote its spectral measure by $\{E^X\}$.
It is well known that if
$X$ is a closed operator affiliated with $\mathcal{M}$ with the
polar decomposition $X = U|X|$, then $U\in\mathcal{M}$ and $E\in
\mathcal{M}$ for all projections $E\in \{E^{|X|}\}$.
Moreover,
$X\in S(\mathcal{M})$ if and only if $X$ is closed,
 densely
defined, affiliated with $\mathcal{M}$ and $E^{|X|}(\lambda,
\infty)$ is a finite projection for some $\lambda> 0$.
 It follows
immediately that in the case when $\mathcal{M}$ is a von Neumann
algebra of type $III$ or a type $I$ factor, we have
$S(\mathcal{M})= \mathcal{M}$.
For type $II$ von Neumann algebras,
this is no longer true.
From now on, let $\mathcal{M}$ be a
semifinite von Neumann algebra equipped with a faithful normal
semifinite trace $\tau$.

For any closed and densely defined linear operator $X:\mathfrak{D}\left( X\right) \rightarrow \cH $,
the \emph{null projection} $n(X)=n(|X|)$ is the projection onto its kernel $\mbox{Ker} (X)$,
 the \emph{range projection } $r(X)$ is the projection onto the closure of its range $\mbox{Ran}(X)$ and the \emph{support projection} $s(X)$ of $X$ is defined by $s(X) ={\bf{1}} - n(X)$.

An operator $X\in S\left( \mathcal{M}\right) $ is called $\tau$-measurable if there exists a sequence
$\left\{P_n\right\}_{n=1}^{\infty}$ in $P\left(\mathcal{M}\right)$ such that
$P_n\uparrow \mathbf{1},$ $P_n\left( \cH \right)\subseteq \mathfrak{D}\left(X\right)$ and
$\tau(\mathbf{1}-P_n)<\infty $ for all $n$.
 The collection $S\left(\cM, \tau \right) $ of all $\tau $-measurable
operators is a unital $\ast $-subalgebra of $S\left(
\mathcal{M}\right) $ denoted by $S\left( \mathcal{M}, \tau\right)
$.
We denote by $S(\cM,\tau)_h$ the real subspace of self-adjoint elements of $S(\cM,\tau)$.
It is well known that a linear operator $X$ belongs to $S\left(
\mathcal{M}, \tau\right) $ if and only if $X\in S(\mathcal{M})$
and there exists $\lambda>0$ such that $\tau(E^{|X|}(\lambda,
\infty))<\infty$. Alternatively, an unbounded operator $X$
affiliated with $\mathcal{M}$ is  $\tau$-measurable
\cite{FK} if and only if
$\tau\left(E^{|X|}\bigl(n,\infty\bigr)\right)\rightarrow 0$ as $ n\to\infty.$
%For any $X=X^*\in S\left( \mathcal{M}, \tau\right)$, we set $X_+=XE^{X}(0,\infty)$ and $X_-=XE^{X}(-\infty,0)$.
\begin{definition}\label{mu}
Let a semifinite von Neumann  algebra $\mathcal M$ be equipped
with a faithful normal semi-finite trace $\tau$ and let $X\in
S(\mathcal{M},\tau)$. The generalised singular value function $\mu(X):t\rightarrow \mu(t;X)$ of
the operator $X$ is defined by setting
$$
\mu(s;X)
=
\inf\{\|XP\|_\infty :\ P=P^*\in\mathcal{M}\mbox{ is a projection,}\ \tau(\mathbf{1}-P)\leq s\}.
$$
\end{definition}
An equivalent definition in terms of the
distribution function of the operator $X$ is the following. For every $X\in S(\mathcal{M},\tau)_h$, we define a right-continuous function $d_X(\cdot)$ by
$d_X(t)=\tau(E^{X}(t,\infty))$, $ t>0$ (see e.g.  \cite{FK}).
We have (see e.g. \cite{FK} and \cite{LSZ})
$$
\mu(t; X)=\inf\{s\geq0:\ d_{|X|}(s)\leq t\}.
$$

%Consider the algebra $\mathcal{M}=L^\infty(0,\infty)$ of all
%Lebesgue measurable essentially bounded functions on $(0,\infty)$.
%Algebra $\mathcal{M}$ can be seen as an abelian von Neumann
%algebra acting via multiplication on the Hilbert space
%$\mathcal{H}=L^2(0,\infty)$, with the trace given by integration
%with respect to Lebesgue measure $m.$
%It is easy to see that the
%algebra of all $\tau$-measurable operators
%affiliated with $\mathcal{M}$ can be identified with
%the subalgebra $S(0,\infty)$ of the algebra of Lebesgue measurable functions which consists of all functions $f$ such that
%$m(\{|f|>s\})$ is finite for some $s>0$. It should also be pointed out that the
%generalised singular value function $\mu(f)$ is precisely the
%decreasing rearrangement $\mu(f)$ of the function $|f|$ (see e.g. \cite{KPS}) defined by
%$$\mu(t;f)=\inf\{s\geq0:\ m(\{|f|\geq s\})\leq t\}.$$

%For convenience of the reader,  we also recall the definition of the \emph{measure topology} $t_\tau$ on the algebra $S(\cM,\tau)$.
For every $\varepsilon,\delta>0,$ we define the set
$$V(\varepsilon,\delta)=\{x\in S(\mathcal{M},\tau):\ \exists P \in \cP\left(\mathcal{M}\right)\mbox{ such that }
\|X(\mathbf{1}-P)\|_\infty \leq\varepsilon,\ \tau(P)\leq\delta\}.$$ The topology
generated by the sets $V(\varepsilon,\delta)$,
$\varepsilon,\delta>0,$ is called the measure topology $t_\tau$ on $S(\cM,\tau)$ \cite{DPS, FK, Nelson}.
%It is well known that the algebra $S(\cM,\tau)$ equipped with the measure topology is a complete metrizable topological algebra \cite{Nelson}.
%We note that a sequence $\{x_n\}_{n=1}^\infty\subset S(\cM,\tau)$ converges to zero with respect to measure topology $t_\tau$ if and only if $\tau\big(E^{|x_n|}(\varepsilon,\infty)\big)\to 0$ as $n\to \infty$ for all $\varepsilon>0$ \cite{DPS}.
A further important vector space topology on $S(\cM,\tau)$ is the \emph{local measure topology} \cite{DP2,DPS}.
A neighbourhood base for this topology is given by the sets $V(\varepsilon, \delta; P)$, $\varepsilon, \delta>0$, $P\in \cP(\cM) $ with $\tau(P)<\infty$, where
$$V(\varepsilon,\delta; P ) = \{X \in S(\cM,\tau): PXP \in V(\varepsilon,\delta)\}. $$
%It is clear that the local measure topology is weaker than the measure topology \cite{DP2,DPS}.
If $\{X_\alpha\}\subset S(\cM,\tau)$ is a net and if $X_\alpha \rightarrow_\alpha X \in S(\cM,\tau)$ in local measure topology, then $X_\alpha Y\rightarrow  XY  $ and $YX _\alpha \rightarrow YX $ in the local measure topology for all $Y \in S(\cM,\tau)$ \cite{DP2,DPS}.
%The space  $S_0(\cM,\tau)$ of $\tau$-compact operators is the space associated to the algebra of functions from $S(0,\infty)$ vanishing at infinity, that is,
%$$S_0(\cM,\tau) = \{x\in S(\cM,\tau) :  \ \mu(\infty; x) =0\}.$$
%The two-sided ideal $\cF(\tau)$ in $\cM$ consisting of all elements of %$\tau$-finite range is defined by
%$$\cF(\tau)=\{x\in \cM ~:~ \tau(r(x)) <\infty\} = \{x \in \cM ~:~ \tau(s(x)) %<\infty\}.$$
%Clearly, $S_0(\cM,\tau)$ is the closure of $\cF(\tau)$ with respect to the %measure topology \cite{DP2}.

\subsection{Quasi-Banach symmetric spaces}\label{sub:sys}It is  convenient to recall the notion of a quasi-normed space \cite{Kalton,KPR}.
Let $X$ be a linear space.
A strictly positive, absolutely homogeneous functional $\left\|\cdot\right\|: X\rightarrow [0,\infty)$ is called a quasi-norm if there exists a constant $K>0$  such that
$$\|x_1+x_2\|\le K\left(\|x_2\| +\|x_2\| \right) , ~x_1,x_2\in X.$$
The optimal choice of $K$ will be called the  modulus of concavity of the quasi-norm.
\begin{definition}\label{opspace}
Let a semifinite von Neumann  algebra $\cM$ be equipped
with a faithful normal semi-finite trace $\tau$.
Let $E(\cM,\tau)$ be a linear subset in $S({\mathcal{M}, \tau})$
equipped with a quasi-norm $\left\|\cdot\right\|_{E}$.
We say that
$E(\cM,\tau)$ is a \textit{symmetrically quasi-normed  space}  if
for $X\in
E(\cM,\tau)$, $Y\in S({\mathcal{M}, \tau})$ and  $\mu(Y)\leq \mu(X)$ imply that $Y\in E(\cM,\tau)$ and
$\|Y\|_E\leq \|X\|_E$.
In particular, if $\left\|\cdot\right\|_ E$ is a norm, then $ E(\cM,\tau) $ is called a symmetrically normed space.
\end{definition}

A symmetrically (quasi-)normed space is called a (quasi-Banach) symmetric space if it is complete.
It is well-known  that any quasi-symmetrically normed space $E(\cM,\tau)$ is a quasi-normed $\cM$-bimodule, that is $AXB\in E(\cM,\tau)$ for any $X\in E(\cM,\tau)$, $A,B\in \cM$ and $\left\|AXB\right\|_E\leq \left\|A\right\|_\infty\left\|B\right\|_\infty\left\|X\right\|_E$ \cite{Sukochev,DP2,DPS}.

If $X,Y\in S(\cM,\tau)$, then $X$ is said to be submajorized by $Y$, denoted by $X\prec\prec Y$ (Hardy--Littlewood--Polya submajorization, see \cite{LSZ,DPS,DP2}), if
\begin{align*}
\int_{0}^{t} \mu(s;X) ds \le \int_{0}^{t} \mu(s;Y) ds, ~\forall t>0.
\end{align*}

%A (quasi-)Banach symmetrically normed space is called  a  (quasi-)Banach symmetric space.
 A symmetric space $E(\cM,\tau)\subset S(\cM,\tau)$ is called \emph{strongly symmetric} if its norm $\left\|\cdot\right\|_E$ has the additional property that   $\left\|X\right\|_E \le \left\|Y\right\|_E$ whenever $X,Y \in E(\cM,\tau)$ satisfy $X\prec\prec Y$.
In addition, if $X\in S(\cM,\tau)$, $Y \in E(\cM,\tau)$ and $X\prec\prec Y$ imply that $X\in E(\cM,\tau)$ and $\left\|X\right\|_E \le \left\|Y\right\|_E$, then
 $E(\cM,\tau)$  is called \emph{fully symmetric space} (of $\tau$-measurable operators).
We denote by $E(\cM,\tau)_h$ the real subspace of a symmetrically quasi-normed space $E(\cM,\tau)$.

A wide class of quasi-Banach symmetric operator spaces associated with the von  Neumann algebra $\cM$ can be constructed from concrete quasi-Banach symmetric function spaces studied extensively in e.g. \cite{KPS}.
Let $(E(0,\infty),\left\|\cdot\right\|_{E(0 ,\infty)})$ be a (quasi-Banach) symmetric function space on the semi-axis $(0,\infty)$.
That is, a quasi-Banach symmetric space for the von Neumann algebra $L_\infty(0,\infty)$ with trace given by Lebesgue integration.
The pair
$$E(\cM,\tau)=\{X\in S(\cM,\tau):\mu(X)\in E(0,\infty)\},\quad \left\|X\right\|_{E(\cM,\tau)}:=\left\| \mu(X) \right\|_{E(0,\infty)}$$ is a (quasi-Banach) symmetric operator space affiliated with $\cM$  (see e.g.  \cite{Kalton_S,LSZ,HLS2017,Sukochev}).
For convenience, we denote $\left\|\cdot\right\|_{E(\cM,\tau)}$ by $\left\|\cdot\right\|_E$.
Many properties of quasi-Banach symmetric spaces, such as reflexivity, Fatou property, order continuity of the norm as well as K\"{o}the duality carry over from commutative symmetric function space $ E(0,\infty)$ to its noncommutative counterpart $ E(\cM,\tau)$ (see e.g. \cite{HSZ,DP2,DDS2014,DDST,DPS,DDP2}).
In particular,
$E(\cM,\tau)$ is fully symmetric whenever $E(0,\infty)$ is a fully symmetric function space on $(0,\infty)$ \cite{DP2,DPS}.

\subsection{$\theta$-H\"older functions}
Throughout this section, we always assume that $0<\theta<1$.
A function $f:\bR \rightarrow \bC$ is called a \emph{$\theta$-H\"older function}  if it satisfies the inequality
\begin{align*}
|f(x)-f(y)|\le {\rm const}  |x-y|^\theta, ~x,y\in \bR.
\end{align*}

Let $\Lambda_\theta $ denote the space of all $\theta$-H\"older functions.
Suppose that  $f:\bR\rightarrow \bR$ is a continuous function and $p>0$.
 If $f(A)-f(B)\in S_p$ for any bounded self-adjoint operators $A,B\in B(\cH)$ with  $rank(A-B)<\infty$,
  then
$g:= f\circ h\in B_{p,p}^{1/p} $ for any rational function $h$ that is real on $\bR$ and has no pole at $\infty$ \cite[Theorem 9.2]{AP2},
where $B^{s}_{p,q}$, $0<s,p,q<\infty$ denotes the Besov space on $\bR$ defined by using Littlewood-Paley decomposition  (see \cite[Section 2.6.1]{Triebel2}). %, see also \cite{Devore_S,Devore_P} for Besov spaces defined by moduli of smoothness).
 In particular,
if $f$ is operator $\theta$-H\"older with respect to   $\left\|\cdot \right\|_p$, then $f\in B_{p, p}^{1/p}$ locally, i.e., the restriction of $f$ to an arbitrary finite interval can be extended to a function of class $B_{p, p}^{1/p}$  (see \cite[Theorem 9.2]{AP2} and the comments below).
However, this necessary condition  is very loose.
The main goal of the present paper  to obtain a simple criterion for a function to be  operator $\theta$-H\"older.

Recently, Sobolev \cite{Sobolev,Sobolev17} introduced   the space $S_{d,\theta}$ consists of  functions $f\in C^{d}(\bR\backslash {0})\cap C(\bR)$  such that
  \begin{align}\label{def:funct}
  \left\|f\right\|_{S_{d,\theta}} :=
  \max_{0\le k \le d }
   \left\|f\right\|_{-\theta+k,k}  :=  \max_{0\le k \le d } \sup _{x\ne 0}
   |x|^{-\theta +k} |f^{(k)}(x)|<\infty .
  \end{align}

The following fact is well-known (see e.g. \cite[(2.3)]{Sobolev17}).
For the sake of completeness, we provide a short proof.
\begin{lemma}\label{Lemmaholdertrivial} For $d\geq 1,$ we have $S_{d,\theta}\subset\Lambda_{\theta}.$
\end{lemma}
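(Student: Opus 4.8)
The plan is to reduce the claim to two elementary estimates, one valid for $|x-y|$ small relative to $|x|,|y|$, and one valid for $|x-y|$ large. Fix $f \in S_{d,\theta}$ with $d \ge 1$. The quantity to control is $|f(x)-f(y)|$; by symmetry assume $|x| \le |y|$, so in particular $|x-y| \le 2|y|$.

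First I would handle the case where $y$ and $x$ lie on the same side of $0$, say $0 < x \le y$ (the case $x \le y < 0$ is identical after reflecting, and the case $x=0$ follows by continuity of $f$ at $0$ together with $|f(y)-f(0)| = \lim_{t\to 0^+}|f(y)-f(t)|$). On the interval $[x,y] \subset (0,\infty)$ the function $f$ is $C^1$, so by the mean value theorem there is $\xi \in [x,y]$ with $|f(x)-f(y)| = |f'(\xi)|\,|x-y|$. Since $\xi \ge x \ge 0$ we only get $|f'(\xi)| \le \|f\|_{S_{d,\theta}}\,\xi^{\theta-1} \le \|f\|_{S_{d,\theta}}\,x^{\theta-1}$ when $\theta < 1$ (the exponent $\theta - 1$ is negative, so larger arguments give smaller bounds, and $\xi \ge x$ is exactly what we want). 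This yields $|f(x)-f(y)| \le \|f\|_{S_{d,\theta}}\,x^{\theta-1}|x-y|$. If $|x-y| \le x$ this already gives $|f(x)-f(y)| \le \|f\|_{S_{d,\theta}}\,|x-y|^\theta$, since $x^{\theta-1}|x-y| = |x-y|^\theta (|x-y|/x)^{1-\theta} \le |x-y|^\theta$. If instead $|x-y| > x$, then $0 < x < y \le 2|x-y|$ (using $y = x + (y-x) \le 2(y-x)$ when $x < y-x$), hence both $|f(x)-f(0)|$ and $|f(y)-f(0)|$ are bounded, via the case just treated with one endpoint pushed to $0$, by $\|f\|_{S_{d,\theta}}\,x^\theta$ and $\|f\|_{S_{d,\theta}}\,y^\theta$ respectively; the triangle inequality then gives $|f(x)-f(y)| \le \|f\|_{S_{d,\theta}}(x^\theta + y^\theta) \le 3\,\|f\|_{S_{d,\theta}}\,|x-y|^\theta$.

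The only remaining case is $x < 0 < y$ (or $y < 0 < x$, symmetric), where now $|x|,|y| \le |x-y|$. Writing $|f(x)-f(y)| \le |f(x)-f(0)| + |f(0)-f(y)|$ and applying the one-sided estimate on each of $[x,0]$ and $[0,y]$ (again with an endpoint at $0$) bounds this by $\|f\|_{S_{d,\theta}}(|x|^\theta + |y|^\theta) \le 2\,\|f\|_{S_{d,\theta}}\,|x-y|^\theta$. Collecting the cases, $|f(x)-f(y)| \le 3\,\|f\|_{S_{d,\theta}}\,|x-y|^\theta$ for all $x,y \in \bR$, so $f \in \Lambda_\theta$.

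I do not expect a genuine obstacle here — the statement is the routine observation recorded as $(2.3)$ in \cite{Sobolev17}, and only $d \ge 1$ (i.e. control of $f'$) is used. The one point requiring a little care is the bookkeeping in the case $|x-y|$ large, where one must pass through the value $f(0)$ and invoke continuity of $f$ at $0$ to justify the endpoint-at-$0$ version of the mean value estimate (the derivative bound $|f'(\xi)| \le \|f\|_{S_{d,\theta}}|\xi|^{\theta-1}$ is only asserted for $\xi \ne 0$, but it is integrable near $0$ since $\theta - 1 > -1$, which is exactly what makes $f(0^\pm)$ well-defined and the estimate $|f(y)-f(0)| \le \|f\|_{S_{d,\theta}}|y|^\theta$ legitimate); the explicit value of the constant ($3$, or whatever falls out) is immaterial.
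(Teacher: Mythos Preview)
Your argument is correct, with one small imprecision: when you bound $|f(x)-f(0)|$ and $|f(y)-f(0)|$ ``via the case just treated with one endpoint pushed to $0$,'' the case just treated was the MVT bound $|f(x)-f(y)|\le \|f\|_{S_{d,\theta}}x^{\theta-1}|x-y|$, which blows up as $x\to 0^+$ and so cannot be applied by limit. What actually works (and what your final paragraph correctly indicates) is either the direct $k=0$ bound $|f(y)|=|f(y)-f(0)|\le \|f\|_{S_{0,\theta}}|y|^\theta$, or the integral $|f(y)-f(0)|\le \|f\|_{S_{1,\theta}}\int_0^{|y|}t^{\theta-1}\,dt=\theta^{-1}\|f\|_{S_{1,\theta}}|y|^\theta$. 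Either way the constant is harmless, as you note.

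The paper's proof follows the same sign-based case split but is slightly more economical on the same-sign side: rather than MVT plus a near/far sub-split, it integrates $f'$ exactly to get $|f(x)-f(y)|\le \theta^{-1}\|f\|_{S_{1,\theta}}\bigl||x|^\theta-|y|^\theta\bigr|$ and then invokes the elementary inequality $\bigl||x|^\theta-|y|^\theta\bigr|\le |x-y|^\theta$ once and for all. For the opposite-sign case it uses the $k=0$ bound directly. This avoids your two sub-cases and the limiting argument at $0$; your route trades that cleanliness for not needing to quote the scalar inequality $|a^\theta-b^\theta|\le |a-b|^\theta$.
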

\begin{proof}
When $xy\le 0$, it is clear that
$$|f(x)-f(y)| \le |f(x)| +|f(y)|\le \left\|f\right\|_{S_{0,\theta}}|x|^\theta+\|f\|_{S_{0,\theta}}|y|^\theta \le 2\left\|f\right\|_{S_{0,\theta}}|x-y|^\theta.$$

When $xy\geq0,$ we have
$$|f(x)-f(y)|=\Big|\int_x^yf'(t)dt\Big|\leq \left\|f\right\|_{S_{1,\theta}}\Big|\int_x^y|t|^{\theta-1}dt\Big|=\frac1{\theta}\left\|f\right\|_{S_{1,\theta}}\Big||x|^{\theta}-|y|^{\theta}\Big|.$$
Note that
$$\Big||x|^{\theta}-|y|^{\theta}\Big| \stackrel{\tiny \mbox{\cite[(2.2)]{KPR}}}{\le} \Big| |x| -|y | \Big|^\theta  = |x-y|^{\theta},\quad x,y\in\mathbb{R}.$$
Hence, for $xy\geq0,$ we have
$$|f(x)-f(y)|\leq \frac1{\theta}\left\|f\right\|_{S_{1,\theta}}|x-y|^{\theta}.$$

In either case, we have
$$|f(x)-f(y)|\leq \frac2{\theta}\left\|f\right\|_{S_{1,\theta}}|x-y|^{\theta}.$$
\end{proof}

\begin{remark}
The definition of $S_{d,\theta}$ can be modified to the case of $f\in C^{d}(\bR\backslash {x_0})\cap C(\bR)$, $x_0\in \bR$, and the results of the present paper  remain true (see \cite[Remark 2.6]{Sobolev}).
\end{remark}

We note that   $\left\|\cdot \right\|_{-\theta+k,k}$ is an analogue of the
 Schwartz semi-norm  with multi-indices $(k-\theta, k)$, $0\le k\le d$.
The space $S_{d,\theta}$ embraces a wide class of functions, e.g.,
if $f$ is a Schwartz class function with $f(0)=0$, then   $f\in S_{\infty,\theta}$.
It is  easy to see that
 classic operator-monotone functions such as $t\mapsto |t|^\theta$,
 $t\mapsto \log(|t|+1) $ and $t\mapsto \frac{|t|}{r+|t|} $, $t\in \bR$ are in $ S_{\infty,\theta}$.

In this paper, we mainly consider $\theta$-H\"older functions of the class $S_{d,\theta}$.
The following proposition demonstrates that $f\in S_{d,\theta}$ is a very mild condition  for a  $\theta$-H\"older function $f$.
% $S_{d,\theta}$ embraces   a very wide class of  $\theta$-H\"older functions $f:\bR \rightarrow \bR \in C^d(\bR \setminus \{0\})$ and it is a very mild condition .
\begin{proposition}
Let $f:\bR \rightarrow \bR \in C^n(\bR \setminus \{0\}) \cap C(\bR) $, $n\ge 0$, with $f(0)=0$ and $\theta\in (0,1)$.
Assume that there exist positive numbers $r,C,\varepsilon>0$  such that
$$
\frac{|f^{(n)}(x)|}{|x|^{\theta-r-n}} \ge C>0
$$
for all $x\in (0,\varepsilon)$.
Then, $f$ is not $\theta$-H\"older.
\end{proposition}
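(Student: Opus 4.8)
The plan is to argue by contradiction: assume $f\in\Lambda_\theta$ with some finite H\"older constant $L$, and show that the pointwise lower bound on $|f^{(n)}|$ near the origin is incompatible with this. I would first dispose of the case $n=0$, which is immediate: the hypothesis reads $|f(x)|\ge C|x|^{\theta-r}$ on $(0,\e)$, whereas $\theta$-H\"olderness together with $f(0)=0$ gives $|f(x)|=|f(x)-f(0)|\le L|x|^\theta$; combining the two yields $C\le L|x|^{r}$ for all $x\in(0,\e)$, which is absurd as $x\to0^+$ since $r>0$.

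For $n\ge1$ the key preliminary observation is that $f^{(n)}$ has constant sign on $(0,\e)$: it is continuous there and, by hypothesis, $|f^{(n)}(x)|\ge C|x|^{\theta-r-n}>0$, so it never vanishes, and an interval is connected. Replacing $f$ by $-f$ if necessary, I may therefore assume $f^{(n)}(x)\ge Cx^{\theta-r-n}$ for all $x\in(0,\e)$. Then I would use the iterated forward difference $\Delta_h^nf(x_0):=\sum_{j=0}^n(-1)^{n-j}\binom{n}{j}f(x_0+jh)$ together with the standard identity
$$\Delta_h^nf(x_0)=\int_0^h\!\!\cdots\!\int_0^h f^{(n)}(x_0+s_1+\cdots+s_n)\,ds_1\cdots ds_n,$$
valid whenever $[x_0,x_0+nh]\subset\bR\setminus\{0\}$. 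Taking $x_0=h$ with $(n+1)h<\e$, and using that $t\mapsto t^{\theta-r-n}$ is decreasing on $(0,\infty)$ (the exponent is negative, since $n\ge1$, $\theta<1$ and $r>0$), I would bound
$$\Delta_h^nf(h)\ge C\,h^n\big((n+1)h\big)^{\theta-r-n}=C(n+1)^{\theta-r-n}\,h^{\theta-r}.$$

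On the other hand, if $f$ is $\theta$-H\"older with constant $L$ then $|f(x+h)-f(x)|\le Lh^\theta$; writing $\Delta_h^nf=\Delta_h^{n-1}(\Delta_hf)$ and estimating the $(n-1)$-fold difference of the bounded function $x\mapsto\Delta_hf(x)$ crudely by $2^{n-1}\sup|\Delta_hf|$, one gets $|\Delta_h^nf(h)|\le 2^{n-1}Lh^\theta$. Comparing the two displays gives $C(n+1)^{\theta-r-n}h^{-r}\le 2^{n-1}L$ for all sufficiently small $h>0$, and letting $h\to0^+$ yields the desired contradiction.

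The only genuinely delicate point is the constant-sign observation for $f^{(n)}$: without it the integral representation of $\Delta_h^nf$ could suffer cancellation and the lower bound on $|\Delta_h^n f(h)|$ would be lost. Everything else is routine one-variable analysis, and the argument is insensitive to the presence of the other (uncontrolled) lower-order data of $f$, since finite differences of order $n$ annihilate polynomials of degree $n-1$ and the integral identity involves only $f^{(n)}$.
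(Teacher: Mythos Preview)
Your proof is correct and takes a genuinely different route from the paper. The paper argues by downward induction on the order of the derivative: starting from the hypothesis $|f^{(n)}(x)|\ge C x^{\theta-r-n}$ on $(0,\varepsilon)$, it integrates from $x$ to $\varepsilon$ to obtain a bound of the form $|f^{(n-1)}(x)-C'|\ge C_{n-1}x^{\theta-r-n+1}$, and repeats this until reaching $f'$, then integrates once more from $0$ (using $f(0)=0$) to conclude that $\sup_{x\ne 0}|f(x)|/|x|^\theta=\infty$. At each step an additive constant appears from the boundary term at $x=\varepsilon$, and one has to observe that these polynomial correction terms are harmless near the origin.

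Your argument sidesteps the induction entirely by using the integral representation of the $n$-th forward difference, $\Delta_h^n f(x_0)=\int_{[0,h]^n}f^{(n)}(x_0+s_1+\cdots+s_n)\,ds$, which packages all $n$ integrations into a single step and automatically kills the boundary constants (finite differences of order $n$ annihilate polynomials of degree $n-1$). The constant-sign observation, which you rightly flag as the only delicate point, plays the same role as the paper's reduction to $f^{(k)}(x)-C\ge C_k x^{\theta-r-k}$ (rather than the absolute value). Your approach is shorter and arguably cleaner; the paper's has the minor advantage of being entirely self-contained, not invoking the finite-difference identity.
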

\begin{proof}
%Since $|f^{(n)}(x)|\ge C |x|^{\theta -r-n}$, $x\in (0,\varepsilon)$, and $f^{(n)}$ is a continuous function,
%it follows that
%$f^{(n)}$ satisfies either $f^{(n)}(  x )  \ge C x^{\theta-r -n }  $  %or $f^{(n)}(  x )  \le -C x^{\theta-r -n }  $ for every $x\in %(0,\varepsilon)$.
%Without loss of generality, we assume that $r\in (0,\theta )$ and
%\begin{align}\label{fnrn}
%f^{(n)}(  x )  \ge C x^{\theta-r -n }  .
%\end{align}
Without loss of generality, we assume that $r\in (0,\theta)$.
We assert that if   for $k\ge 2 $ and a constant  $C$, there exists a constant $C_{k,\theta,r} >0$ such that
\begin{align}\label{fnrn}
 \left|f^{(k)}(x)- C \right | \ge  C_{k,\theta,r} x^{\theta -r-k} ,~\forall x\in (0,\varepsilon),
 \end{align}
 then there exist  constants  $C_{k-1,\theta,r} >0$ and  $C'$ such that
\begin{align}\label{fnrn'}
 \left|f^{(k-1)}(x)-C' \right | \ge  C_{k-1,\theta,r} x^{\theta -r -k+1} ,~\forall x\in (0,\varepsilon).
 \end{align}
Without loss of generality, we may assume that
$$ f^{(k)}(x)- C \ge  C_{k,\theta,r} x^{\theta -r-k } ,~\forall x\in (0,\varepsilon). $$
Integrating  from $x$ to $\varepsilon$, we obtain that
$$ f^{(k-1)}(\varepsilon)  - f^{(k-1)}(x ) -  C(\varepsilon-x )  \ge \frac{ C_{k,\theta,r}}{\theta -r-k+1 } \left(\varepsilon ^{\theta -r-k+1} - x^{\theta -r-k+1}  \right),~x\in (0,\varepsilon). $$
Since any polynomial is bounded on $(0,\varepsilon)$, it follows that there exists a constant $C'$ such  that
$$ f^{(k-1)}(x ) - C' \le \frac{ C_{k,\theta,r}}{\theta -r-k+1 } x^{\theta -r-k+1}  ,~x\in (0,\varepsilon). $$
%where $p'(x)= f^{(k-1)}(\varepsilon)  -  \int_x^\varepsilon  p(t)dx  - \frac{ C_{k,\theta,r}}{\theta -r-k+1 } \varepsilon ^{\theta -r-k+1} $.
Noting that  $\theta-r-k+1<0$, we obtain  the validity of \eqref{fnrn'}.

Assume that there exist constants $C_{1,\theta,r}>0$ and  $C$  such that
$$|f'(x) -C |\ge C_{1,\theta,r} x^{\theta -r -1}, ~\forall x\in (0,\varepsilon).$$
Without loss of generality, we may assume that $f'(x) -C \ge C_{1,\theta,r} x^{\theta -r -1}, ~\forall x\in (0,\varepsilon)$.
Hence,
$$f(x) - Cx  =f(x) -f(0) - Cx \ge \frac{C_{1,\theta,r} }{\theta -r}x^{\theta -r }, ~\forall x\in (0,\varepsilon).$$
Therefore,
\begin{align*} \sup_{x\ne 0} \left| \frac{f(x)}{x^\theta} \right|   &\ge  \sup_{x\ne 0} \left| \frac{f(x)}{x^\theta}  -Cx^{1-\theta}  \right| - \left| Cx^{1-\theta}  \right| \\
 &\ge  \sup_{x\in (0,\varepsilon)}  \left| \frac{C_{1,\theta,r} }{\theta -r}x^{ -r } \right| - \left| Cx^{1-\theta}  \right|  =\infty .
\end{align*}
This together with \eqref{fnrn'} implies that $f$ is not $\theta$-H\"older.
%When $n\ge 2$, by \eqref{fnrn}, we have
%\begin{align}\label{theta1}
%f^{(n-1)} (\varepsilon) - f^{(n-1)}(x) \ge \frac{C}{\theta -r-n+1}  %(\varepsilon ^{\theta-r -n+1 } -x  ^{\theta-r -n+1 }), ~x\in %(0,\varepsilon).
%\end{align}
%Let $  C_1: = \frac{C}{\theta -r-n+1}  \varepsilon ^{\theta-r -n+1 } %-f^{(n-1)}(\varepsilon)$ and $C_{1,\theta,r} = \frac{-C}{\theta -r-n+1} >0 $.
%By \eqref{theta1}, we obtain that
% $$ f^{(n-1)} (x ) -C_1 \le -  C_{1,\theta,r} x^{\theta-r-n+1}  .$$
% The same argument yields that
%  $$ f^{(n-2)} (\varepsilon ) -f^{(n-2)} (x ) -C_1 (\varepsilon-x) \le  %\frac{-  C_{1,\theta,r}}{\theta-r-n+2}(\varepsilon^{\theta-r-n+2}   - x^{\theta-r-n+2} ) .$$
% Letting $C_2:= - f^{(n-2)} (\varepsilon )  + C_1 \varepsilon-  \frac{ C_{1,\theta,r}}{\theta-r-n+2}\varepsilon^{\theta-r-n+2} $ and
% $ C_{2,\theta,r}:= \frac{ - C_{1,\theta,r}}{\theta-r-n+2} >0$,
% we have
%  $$f^{(n-2)} (x ) -C_1x +C_2 \ge   C_{2,\theta,r}  x^{\theta-r-n+2}
%     .$$
%Arguing inductively, we obtain that
%$$ \left|f(x)-\sum_{k=0}^{n-1} C_k'  x^k \right | \ge  C_{n,\theta,r} %x^{\theta -r} ,~x\in (0,\varepsilon),  $$
%where $C_{n, \theta ,r}$ and $C_k'$, $0\le k\le n-1$, are real numbers %with $|C_k'|= |C_k|$.
%Clearly,
%\begin{align*}  \sup_{x\ne 0} \left| \frac{f(x)}{x^\theta}\right|   %&\ge \sup_{x\ne 0}\left| \frac{f(x)}{x^\theta}- \sum_{k=0}^{n-1} C'_k  %x^{k-\theta }\right| - \left|\sum_{k=0}^{n-1} C'_k  x^{k-\theta } %\right|\\
%&\ge \sup_{x \in (0,\varepsilon)}     C_{n, \theta,r}  x^{-r} - %\left|\sum_{k=0}^{n-1} C'_k  x^{k-\theta } \right|  =\infty.
%\end{align*}
\end{proof}
\section{Double operator integrals}
In this section, we review some aspects of the beautiful theory of double operator integrals.
Double operator integrals appeared in the paper by Daletskii and Krein \cite{DK}.
For details of the theory of double operator integrals, the reader is referred to \cite{PS1,BS03,AP2016,ST}.
For basic properties of noncommutative $L_p$-spaces $L_p(\cM,\tau)$, we refer to \cite{PX}.

%{\color{red}see Section 2.4 in CPSZ for references}
Symbolically, a double operator integral is defined by the formula
$$T_a^{A,B}(V) = \int_{\bR^2} a(\lambda,\mu) dE_A(\lambda)VE_B(\mu),\quad V\in L_2(\cM,\tau)$$
for a bounded Borel function $a$ on $\bR^2$ and for self-adjoint operators $A$ and $B$ affiliated with $\cM$ (see e.g. \cite{dWS}, see also \cite{BS03}\cite[Section 3.5]{ST}). Here, $(\sX,E_A)$ and $(\sY,E_B)$ are spectral measures of $A$ and $B,$ respectively.

%{\color{blue}If
%$$\left\|T^{A,B}_a(V)\right\|_p\leq C_{A,B,a}\left\|V \right\|_p,\quad V\in(L_p\cap L_2)(\cM,\tau),$$
%then our double operator integral extends to a bounded operator %$T^{A,B}_a:L_p(\cM,\tau)\to L_p(\cM,\tau).$}

We write $a\in \fM_p$ if
$$\left\| a \right\|_{\fM_p}:= \sup_{\cM}\sup_{A=A^*,B=B^* \eta \cM }\left\|T_a^{A,B}\right\|_{L_p \to L_p} <\infty,$$
where $\left\|T_a^{A,B}\right\|_{L_p \to L_p}$ is the operator quasi-norm of $T_a^{A,B}$ from $(L_p\cap L_2)(\cM,\tau)$ into itself.
It is clear that   for every $p>0$, we have
\begin{align}\label{beq1}
\|ab\|_{\fM_p} \le \|a\|_{\fM_p} \|b\|_{\fM_p}
\end{align}
 for every $a,b\in \fM_p$.
It follows from the $p$-triangle inequality for $L_p(\cM,\tau)$, $0<p\le 1$, and every $a,b\in \fM_p$,  we have (see e.g. \cite[(1.2)]{AP1})
\begin{align}\label{beq2}
\|a+b\|_{\fM_p}^p\leq\|a\|_{\fM_p}^p+\|b\|_{\fM_p}^p.
\end{align}
Moreover,  if $a_n \in \fM_p$, $n\ge 0$, such that  $\sum_{n=0}^\infty a_n$ is bounded Borel function,  then %{\color{red}Do you think we need a proof for the following?}
\begin{align}\label{beq3}
 \left\|\sum_{n=0}^\infty a_n\right\|_{\fM_p}^p\leq\sum_{n=0}^{\infty}\left\|a_n\right\|_{\fM_p}^p.
 \end{align}

Let $ f $ be a function admitting a representation
\begin{align*} %\label{rep}
 f(x,y) = \sum_{n\ge 0} \varphi_n(x) \psi_n (y),
 \end{align*}
where $\varphi_n \in L^\infty (E_A), \psi_n\in L^\infty(E_B )$ are Borel functions, and
\begin{align*}%\label{rep:sum}
\sum_{n\ge 0} \left\|\varphi_n \right\|_\infty \left\|\psi_n \right\|_\infty <\infty.
\end{align*}
Then, for arbitrary $V\in B(\cH)$,
\begin{align*}%\label{def:D}
T_{f}^{A,B}(V) := \sum_{n\ge 0} \left( \int _\sX \varphi_n (\lambda ) d E_A (\lambda)  \right)V \left( \int _\sY \psi_n (\mu) d E_B(\mu)   \right).
\end{align*}
Here, the convergence is understood in the uniform norm topology, and therefore, in (local) measure topology.
In particular, $T_{f}^{A,B}$ is a bounded operator from $\cM$ into $\cM$ \cite{Peller1985} (see also \cite[Theorem 4]{PS2009}).

The following lemma provides a criterion for verifying whether a function is in $\fM_p$ or not (see also \cite[p. 279]{AP1} and \cite[Lemma 2.3]{Ricard}).
\begin{lemma}\label{remark:p}
Let $p\in (0,1]$ and let $a$ be a Borel function admitting a representation
\begin{align}\label{rep}
a(\lambda,\mu)=\sum_{n\ge 0}\varphi_n(\lambda)\psi_n(\mu), ~\lambda,\mu\in \bR,
\end{align}
where $(\left\|\varphi_n\right\|_\infty )_n\in \ell_\infty $ and  $(\left\|\psi_n\right\|_\infty )_n \in \ell_p $.
We have
\begin{equation}\label{mfp estimate}
\left\|a\right\|_{\fM_p}\leq\Big\|\Big\{\|\varphi_n\|_\infty\Big\}_{n\geq0}\Big\|_{\infty}\Big\|\Big\{\|\psi_n\|_\infty\Big\}_{n\geq0}\Big\|_p.
\end{equation}

\end{lemma}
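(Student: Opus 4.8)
The plan is to reduce the bound to the case of a single rank-one symbol and then to reassemble the pieces via the $p$-subadditivity of the $\fM_p$ quasi-norm recorded in \eqref{beq3}; this is essentially the argument of \cite[p.~279]{AP1} and \cite[Lemma 2.3]{Ricard}, adapted to the quasi-normed setting.

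First I would treat an elementary tensor $a_n(\lambda,\mu):=\varphi_n(\lambda)\psi_n(\mu)$. For such a symbol the associated double operator integral acts simply as two-sided multiplication: for any semifinite $(\cM,\tau)$, any self-adjoint $A,B$ affiliated with $\cM$, and any $V\in(L_p\cap L_2)(\cM,\tau)$ one has $T_{a_n}^{A,B}(V)=\varphi_n(A)\,V\,\psi_n(B)$, where $\varphi_n(A):=\int_\bR\varphi_n\,dE_A$ and $\psi_n(B):=\int_\bR\psi_n\,dE_B$ belong to $\cM$ with $\|\varphi_n(A)\|_\infty\le\|\varphi_n\|_\infty$ and $\|\psi_n(B)\|_\infty\le\|\psi_n\|_\infty$. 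Invoking the fact, recalled in Section \ref{sub:sys}, that every symmetrically quasi-normed space --- and in particular $L_p(\cM,\tau)$ --- is a quasi-normed $\cM$-bimodule, this yields $\|T_{a_n}^{A,B}(V)\|_p\le\|\varphi_n\|_\infty\,\|\psi_n\|_\infty\,\|V\|_p$, and hence, after taking the supremum over $\cM$, $A$, $B$,
$$\|a_n\|_{\fM_p}\le\|\varphi_n\|_\infty\,\|\psi_n\|_\infty.$$

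Next I would check that the series $\sum_{n\ge0}a_n$ converges (uniformly) to the bounded Borel function $a$, so that \eqref{beq3} is applicable: since $0<p\le1$ we have $\ell_p\subset\ell_1$, whence $\{\|\psi_n\|_\infty\}_n\in\ell_1$ and therefore $\sum_{n\ge0}\|\varphi_n\|_\infty\|\psi_n\|_\infty\le\big(\sup_n\|\varphi_n\|_\infty\big)\sum_{n\ge0}\|\psi_n\|_\infty<\infty$. Then \eqref{beq3} together with the previous estimate gives $\|a\|_{\fM_p}^p\le\sum_{n\ge0}\|a_n\|_{\fM_p}^p\le\big(\sup_n\|\varphi_n\|_\infty\big)^p\sum_{n\ge0}\|\psi_n\|_\infty^p$, and extracting $p$-th roots produces exactly \eqref{mfp estimate}.

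The one delicate point I foresee is the first step: verifying that, for a rank-one symbol, the double operator integral $T_{a_n}^{A,B}$ genuinely coincides with the left--right multiplication $V\mapsto\varphi_n(A)V\psi_n(B)$ on \emph{all} of $(L_p\cap L_2)(\cM,\tau)$, and not merely on $\cM$ or on $L_2(\cM,\tau)$, so that the $\cM$-bimodule estimate may legitimately be used to bound the $L_p\to L_p$ quasi-norm. This amounts to bookkeeping, given the definition of $T_a^{A,B}$ recalled above and the fact that all spectral projections of $A$ and $B$ lie in $\cM$; once it is in place, everything else is a direct application of \eqref{beq3} together with the embedding $\ell_p\subset\ell_1$.
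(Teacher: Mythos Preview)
Your proposal is correct and follows essentially the same route as the paper: both arguments use $\ell_p\subset\ell_1$ to ensure convergence, identify the action on each elementary tensor as two-sided multiplication, and then assemble via $p$-subadditivity. The only cosmetic difference is that the paper applies the $p$-triangle inequality directly in $L_p$ for a fixed $V\in L_p\cap\cM$, whereas you phrase the same computation at the level of the $\fM_p$ quasi-norm using \eqref{beq3}; the underlying estimate is identical. The ``delicate point'' you flag is not actually delicate: since $T^{A,B}_a$ is defined on $L_2$ and for a rank-one symbol one has $T^{A,B}_{\varphi_n\otimes\psi_n}(V)=\varphi_n(A)V\psi_n(B)$ for every $V\in L_2(\cM,\tau)$ straight from the definition, the bimodule estimate applies immediately on $L_p\cap L_2$.
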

\begin{proof}  Let $a$ be a Borel function admitting  representation \eqref{rep} and such that the right hand side of \eqref{mfp estimate} is finite.
Let   $A$ and $B$ two self-adjoint operators affiliated with $\cM$.
 Since $\ell_p\subset \ell_1,$ it follows that
$$\sum_{n\geq0}\|\varphi_n\|_\infty\|\psi_n\|_\infty<\infty.$$
%Hence, $a$ is bounded and, therefore, $T^{A,B}_a$ is well defined on $L_2(\cM,\tau)$ \cite[Section 1.2]{BS03}.
We have
$$T^{A,B}_a(V)=\sum_{n\geq0}\varphi_n(A)V\psi_n(B),\quad V\in \cM,$$
where convergence is in the norm topology of $\cM$.
 If $V\in L_p(\cM,\tau)\cap \cM,$ then %{\color{red} Fatou's lemma (see e.g. \cite[Proposition 3.3]{DDP2}) implies that }
\begin{align*}\left\|T^{A,B}_a(V)\right\|_p^p\leq\sum_{n\geq0} \left\|\varphi_n(A)V\psi_n(B) \right\|_p^p
&\leq
\Big(\sum_{n\geq0}\|\varphi_n(A)\|_{\infty}^p\|\psi_n(B)\|_{\infty}^p\Big)\cdot\|V\|_p^p\\
&\leq\Big(\sum_{n\geq0}\|\varphi_n\|_{\infty}^p\|\psi_n\|_{\infty}^p\Big)\cdot\|V\|_p^p.
\end{align*}
Therefore,
$$\left\|T^{A,B}_a\right\|_{L_p\to L_p}\leq\Big(\sum_{n\geq0}\|\varphi_n\|_{\infty}^p\|\psi_n\|_{\infty}^p\Big)^{\frac1p}\leq\Big\|\Big\{\|\varphi_n\|_\infty\Big\}_{n\geq0}\Big\|_{\infty}\Big\|\Big\{\|\psi_n\|_\infty\Big\}_{n\geq0}\Big\|_p.$$
Taking the supremum over $A=A^*,B=B^*\eta\cM$ and then over $\cM,$ we complete the proof.
\end{proof}

The following is an easy corollary of Lemma \ref{remark:p}. Similar results are obtained in \cite[Corollary 2.8]{Ricard}.
\begin{corollary}\label{alpha beta lemma} Let $0< p\le 1$.   Let
$$\alpha(s,t):=\frac1{s-t}\chi_{[\frac12,1)}(|s|)\chi_{(0,\frac14)}(|t|),~s,t\in\bR$$ and
$$\beta(s,t):=\frac{t}{t-s}\chi_{[\frac12,1)}(|s|)\chi_{(2,\infty)}(|t|)~s,t\in\bR.$$
Then, we have
 $$\left\| \alpha \right\|_{\fM _p}, \left\| \beta \right\|_{\fM _p} < \infty.$$
\end{corollary}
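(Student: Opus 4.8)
The plan is to reduce the claim for $\alpha$ and $\beta$ to a direct application of Lemma~\ref{remark:p} by exhibiting explicit separable (in fact Fourier-series) representations of the kernels on the support rectangles, together with controlled cut-off factors. The key observation is that on the region where $|t|<\frac14$ and $\frac12\le|s|<1$ one has $|t/s|<\frac12$, so the geometric series expansion
\[
\frac{1}{s-t}=\frac1s\cdot\frac{1}{1-t/s}=\sum_{n\ge 0}\frac{t^n}{s^{n+1}}
\]
converges absolutely and uniformly there. Thus for $\alpha$ I would write $\alpha(s,t)=\sum_{n\ge0}\varphi_n(s)\psi_n(t)$ with $\psi_n(t)=t^n\chi_{(0,1/4)}(|t|)$ and $\varphi_n(s)=s^{-(n+1)}\chi_{[1/2,1)}(|s|)$, noting $\|\varphi_n\|_\infty\le 2^{\,n+1}$ and $\|\psi_n\|_\infty\le 4^{-n}$, so that $\{\|\varphi_n\|_\infty\}\in\ell_\infty$ fails --- this naive splitting does not work, which is exactly the subtlety. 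So instead I would insert an extra decaying factor: pick a smooth bump $\eta$ supported in a neighbourhood of $[1/2,1)$ and equal to $1$ there, and a smooth $\zeta$ supported in $(-1/4,1/4)$ equal to $1$ on the support of $\chi_{(0,1/4)}(|t|)$; then expand $\eta(s)^{-1}$... no. The cleanest route is: set $\psi_n(t)=(2t)^n\chi_{(0,1/4)}(|t|)$ so $\|\psi_n\|_\infty\le 2^{-n}\in\ell_p$ for every $p>0$, and $\varphi_n(s)=2^{-n}s^{-(n+1)}\chi_{[1/2,1)}(|s|)$ so $\|\varphi_n\|_\infty\le 2^{-n}\cdot 2^{n+1}=2\in\ell_\infty$. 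Then $\varphi_n(s)\psi_n(t)=s^{-(n+1)}t^n\chi\cdot\chi$ and the sum is $\alpha$. Now Lemma~\ref{remark:p} gives $\|\alpha\|_{\fM_p}\le 2\,\big(\sum_n 2^{-np}\big)^{1/p}<\infty$.

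For $\beta$ the analysis is symmetric under $s\leftrightarrow t$ inversion: on the rectangle $\frac12\le|s|<1$, $|t|>2$ we have $|s/t|<\frac12$, so
\[
\frac{t}{t-s}=\frac{1}{1-s/t}=\sum_{n\ge0}\frac{s^n}{t^n},
\]
again absolutely and uniformly convergent. I would set $\varphi_n(s)=s^n\chi_{[1/2,1)}(|s|)$, which satisfies $\|\varphi_n\|_\infty\le 1$, so $\{\|\varphi_n\|_\infty\}\in\ell_\infty$, and $\psi_n(t)=t^{-n}\chi_{(2,\infty)}(|t|)$, which satisfies $\|\psi_n\|_\infty\le 2^{-n}$, hence $\{\|\psi_n\|_\infty\}\in\ell_p$ for every $p\in(0,1]$. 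Lemma~\ref{remark:p} then yields $\|\beta\|_{\fM_p}\le 1\cdot\big(\sum_n 2^{-np}\big)^{1/p}=(1-2^{-p})^{-1/p}<\infty$.

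The only point requiring a little care is that Lemma~\ref{remark:p} is stated for genuine separable representations of a \emph{Borel} function on all of $\bR^2$; here each $\varphi_n,\psi_n$ is Borel (products of monomials with indicator functions of intervals), the series $\sum_n\varphi_n(\lambda)\psi_n(\mu)$ converges pointwise everywhere to $\alpha$ (resp.\ $\beta$) --- on the complement of the rectangle every term vanishes, and on the rectangle the geometric series converges to the stated value --- and the boundedness of $\alpha$ and $\beta$ (they are bounded since $|s-t|$ and $|t-s|$ stay bounded away from $0$ on the respective supports: $|s-t|\ge|s|-|t|>\frac12-\frac14=\frac14$, and $|t-s|\ge|t|-|s|>2-1=1$) makes the termwise estimate in the proof of Lemma~\ref{remark:p} legitimate. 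I expect no real obstacle; the one thing to double-check is the choice of the splitting of the geometric coefficients between $\varphi_n$ and $\psi_n$ so that the $\ell_\infty$ sequence really is bounded --- that is why for $\alpha$ the factor $2^{-n}$ must be shifted onto $\psi_n$, absorbing the $\|\varphi_n\|_\infty\le 2^{n+1}$ blow-up, whereas for $\beta$ no such shift is needed because $s^n$ is already bounded on the $s$-support.
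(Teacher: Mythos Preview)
Your proposal is correct and follows essentially the same route as the paper: both you and the authors expand $\frac{1}{s-t}$ (resp.\ $\frac{t}{t-s}$) as a geometric series on the relevant rectangle and apply Lemma~\ref{remark:p} with the identical choice of factors $\varphi_n(s)=2^{-n}s^{-(n+1)}\chi_{[\frac12,1)}(|s|)$, $\psi_n(t)=(2t)^n\chi_{(0,\frac14)}(|t|)$ for $\alpha$ and $\varphi_n(s)=s^n\chi_{[\frac12,1)}(|s|)$, $\psi_n(t)=t^{-n}\chi_{(2,\infty)}(|t|)$ for $\beta$. The exploratory detour through smooth bumps is unnecessary --- your eventual rebalancing of the $2^{\pm n}$ factors is exactly the paper's argument --- but the final proof is the same.
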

\begin{proof}% Let $\phi_1$ be a smooth  function supported on $(\frac{7}{16}, \frac{17}{16})\cup (-\frac{17}{16} ,-\frac{7}{16}) $ such that $\phi_1(t)=1$ for $t\in [\frac12,1)\cup (-1,-\frac12],$ and let $\phi_2$ be a smooth  function supported on $(-\frac{5}{16},\frac{5}{16})$ such that $\phi_2(t)=1$ for $t\in (-\frac14,\frac14) $.
%Let $\alpha_1:\mathbb{R}^2\to\mathbb{C}$ be a Borel function which is $2\pi$-periodic in both arguments such that
%$$\alpha_1(s,t)=\phi_1(s)\phi_2(t) \frac{1}{s-t},\quad s,t\in(-\pi,\pi).$$
%Let
%$$\alpha_2(s,t)=\chi_{[\frac12,1)}(|s|)\chi_{[0,\frac14)}(|t|),\quad s,t\in\mathbb{R}.$$
%Since $\alpha=\alpha_1\alpha_2,$ it follows that
%$$T^{A,B}_{\alpha}=T^{A,B}_{\alpha_1}\circ T^{A,B}_{\alpha_2}.$$
%Therefore,
%$$\|\alpha\|_{\fM_p}\leq\|\alpha_1\|_{\fM_p}\|\alpha_2\|_{\fM_p}.$$
%By Lemma \ref{remark:p},
%$$\|\alpha_2\|_{\fM_p}\leq 1.$$
%By Lemma \ref{multiplier fourier lemma},
%$$\|\alpha_1\|_p\leq c_{p,b}\|\alpha_1\|_{W^{b+1,2}(\mathbb{T}^2)}<\infty.$$
%This completes the proof of the first estimate.
Note that
$$\frac{1}{1-x} = \sum_{n\ge 0} x^n, ~|x|<1.$$

We write
$$\alpha(s,t)=\frac 1 s \frac{1}{1-\frac {2t}{2 s}}\chi_{[\frac12,1)}(|s|)\chi_{(0,\frac14)}(|t|) = \frac{1}{s}  \sum_{n\geq0}(2s)^{-n}\chi_{[\frac12,1)}(|s|) (2t)^{n}\chi_{(0,\frac14)}(|t|) $$
for any $s,t\in\bR$.
Since $\sup_{s} (2s)^{-n}\chi_{[\frac12,1)}(|s|)<1$,
it follows from Lemma \ref{remark:p} that
$$\left\|\alpha\right\|_{\fM_p}\leq  2
\left\|\{ (2s)^{-n}\chi_{[\frac12,1)}(|s|)   \}_{n\geq0}\right\|_{\infty}
\left\|\{(1/2)^{n}\}_{n\geq0} \right\|_p<\infty.$$
This completes the proof that $\left\| \alpha \right\|_{\fM_p} <\infty$.

For $\beta$, we
 write
$$\beta(s,t)=\frac{1}{1-\frac{s}{t}}\chi_{[\frac12,1)}(|s|)\chi_{(2,\infty)}(|t|) =\sum_{n\geq0}s^n\chi_{[\frac12,1)}(|s|)\cdot t^{-n}\chi_{(2,\infty)}(|t|)$$
for any $s,t\in\bR$.
By Lemma \ref{remark:p}, we have
$$\left\|\beta\right\|_{\fM_p}
\leq
\left\|\{2^{-n}\}_{n\geq0}\right\|_{\infty}
\left\|\{2^{-n}\}_{n\geq0}  \right\|_p<\infty.$$
\end{proof}
\begin{remark}\label{prop:1dbounded} For a bounded Borel function $g:\mathbb{R}\to\mathbb{C},$ define a bounded Borel function $f:\bR^2 \rightarrow \bC$ by setting $f(x,y)=g(x).$ We have
$$\|f\|_{\fM_p} = \|g\|_{\infty}.$$
\end{remark}

If  $f$ is a Lipschitz function on $\bR,$ then
\begin{align}\label{eq:dif}
f(A +K )-f(A)=T_{\fD f }^{A+K,A}(K)
\end{align}
for self-adjoint operators $A \in S(\cM,\tau)$ and $K \in L_2(\cM,\tau) $ \cite[Theorem 7.4]{dWS}.  Here, we use the notation
$$(\fD f)(x,y) := \frac{f(x) -f(y)}{x-y}, ~x\ne y, \qquad  (\fD f)(x,x) :=f'(x),\quad x,y\in \bR.$$
Note  that for a  differentiable function $f$ and for $x\neq y,$ we have
\begin{align}\label{difff}
(\fD f)(x,y)=\int_0^1f'(tx+(1-t)y)dt.
\end{align}

 Let $k\in \mathbb{N}$, $1\le p \le \infty$.
The Sobolev space $W^{k,p}(\bT^2)$ is defined to be the set of all functions $f$ on $\bT^2$ such that for every multi-index $\alpha=(\alpha_1,\alpha_2)$ with $|\alpha|\le k$, the mixed partial derivative $\frac{\partial^{|\alpha|} f}{\partial _2^ {\alpha_2} \partial_1 ^{\alpha_1} }$
exists in the weak sense and is in $L_p(\Omega)$\cite{Triebel2}.
Here,
 $\bT^2$ stands for the 2-dimensional torus, i.e., $\bT=\bR / 2\pi \bZ$ (equipped with the Lebesgue measure).
The following lemma is a modification of \cite[Lemma 2.7]{Ricard}.
Note that  the function considered below is not required to be infinitely differentiable for the second argument as in \cite[Lemma 2.7]{Ricard}.

\begin{lemma}\label{multiplier fourier lemma}  Let $b\in \mathbb{Z}$ be such that  $b>1/p$, $p\in (0,1]$.
Let $a:\mathbb{R}^2\to\mathbb{C}$ be a bounded Borel $2\pi$-periodic function in the both arguments.
If the mixed partial derivative $\frac{\partial^{m+n}a}{\partial_2^ m \partial_1   ^n }   $ exists (in the usual sense)
for every multi-index $(m,n)$, $0\le m+n\le b+1$
 with
$a\in W^{b+1,2}(\bT^2)$ (that is, $\left\|a\right\|_{W^{b+1,2}(\bT^2) } : =\sum _{0\le m+n \le b+1}\left\|\frac{\partial^{m+n} }{(\partial _2)^m (\partial_1)^{n}}a\right\|_2<\infty$, where   $\|f\|_2$ denotes the $L_2$-norm of $f |_{\bT^2  }$), then we have
$$\left\|a\right\|_{\fM_p}\leq c_{p,b} \left\|a\right\|_{W^{b+1,2}(\bT^2) }  <\infty .$$
% Here, $\|f\|_2$ denotes the $L_2$-norm of $f |_{[-\pi,\pi]^2 }$.

\end{lemma}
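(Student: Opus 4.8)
The plan is to expand $a$ in its Fourier series on $\bT^2$ and group the resulting terms by the frequency in the first variable. Note first that $b\ge 2$, since $b\in\bZ$ and $b>1/p\ge 1$. Write $\widehat{a}(j,k)$, $j,k\in\bZ$, for the Fourier coefficients of $a$. Applying Parseval's identity to each mixed partial derivative $\frac{\partial^{m+n}a}{\partial_2^m\partial_1^n}$ (whose Fourier coefficients are $(ij)^n(ik)^m\widehat{a}(j,k)$) and using $\sum_{m+n\le b+1}|j|^{2n}|k|^{2m}\asymp(1+j^2+k^2)^{b+1}$, the hypothesis $a\in W^{b+1,2}(\bT^2)$ becomes
$$\sum_{j,k\in\bZ}(1+j^2+k^2)^{b+1}\,|\widehat{a}(j,k)|^2\ \le\ c_b^2\,\|a\|_{W^{b+1,2}(\bT^2)}^2\ <\ \infty$$
for a constant $c_b$ depending only on $b$. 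Since $b+1\ge 3>1$, Cauchy--Schwarz gives $\sum_{j,k}|\widehat{a}(j,k)|<\infty$, so the Fourier series of $a$ converges absolutely and uniformly; as $a$ is continuous (by the Sobolev embedding $W^{b+1,2}(\bT^2)\subset C(\bT^2)$) and $2\pi$-periodic in each variable, we have $a(\lambda,\mu)=\sum_{j,k\in\bZ}\widehat{a}(j,k)\,e^{ij\lambda}e^{ik\mu}$ for all $\lambda,\mu\in\bR$.

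I would then write $a=\sum_{j\in\bZ}a_j$ with $a_j(\lambda,\mu):=e^{ij\lambda}\psi_j(\mu)$ and $\psi_j(\mu):=\sum_{k\in\bZ}\widehat{a}(j,k)\,e^{ik\mu}$; each $\psi_j$ is a bounded continuous $2\pi$-periodic function on $\bR$, and the series $\sum_j a_j$ converges uniformly. Since each $a_j$ is an elementary product, Lemma~\ref{remark:p} (applied to the one-term representation, with $\|e^{ij\lambda}\|_\infty=1$) yields $\|a_j\|_{\fM_p}\le\|\psi_j\|_\infty$, and the quasi-subadditivity~\eqref{beq3} then gives
$$\|a\|_{\fM_p}^p\ \le\ \sum_{j\in\bZ}\|a_j\|_{\fM_p}^p\ \le\ \sum_{j\in\bZ}\|\psi_j\|_\infty^p.$$
To estimate the last sum I would bound $\|\psi_j\|_\infty\le\sum_{k\in\bZ}|\widehat{a}(j,k)|$ and apply Cauchy--Schwarz with the weight $(1+j^2+k^2)^{(b+1)/2}$, obtaining, with $B_j:=\sum_{k\in\bZ}(1+j^2+k^2)^{b+1}|\widehat{a}(j,k)|^2$,
$$\|\psi_j\|_\infty^2\ \le\ B_j\cdot\sum_{k\in\bZ}(1+j^2+k^2)^{-(b+1)}\ \le\ C_b\,(1+j^2)^{-\frac{2b+1}{2}}\,B_j,$$
where the second inequality follows by comparing $\sum_k(1+j^2+k^2)^{-(b+1)}$ with $\int_\bR(1+j^2+x^2)^{-(b+1)}\,dx$, which is of order $(1+j^2)^{\frac12-(b+1)}$.

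Finally, raising the previous line to the power $p/2$, summing over $j$, and applying H\"older's inequality with the conjugate exponents $\frac2p$ and $\frac{2}{2-p}$, I would get
$$\sum_{j\in\bZ}\|\psi_j\|_\infty^p\ \le\ C_b^{p/2}\Big(\sum_{j\in\bZ}(1+j^2)^{-\frac{(2b+1)p}{2(2-p)}}\Big)^{\frac{2-p}{2}}\Big(\sum_{j\in\bZ}B_j\Big)^{p/2}.$$
The $j$-series on the right converges precisely when $\frac{(2b+1)p}{2-p}>1$, i.e.\ $b>\frac1p-1$, which holds under the hypothesis $b>\frac1p$; and $\sum_{j}B_j=\sum_{j,k}(1+j^2+k^2)^{b+1}|\widehat{a}(j,k)|^2\le c_b^2\|a\|_{W^{b+1,2}(\bT^2)}^2$ by the first display. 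Chaining the three displays yields $\|a\|_{\fM_p}\le c_{p,b}\|a\|_{W^{b+1,2}(\bT^2)}<\infty$, as required. The point that needs care is precisely this grouping: one must keep the \emph{full} weight $(1+j^2+k^2)$ when summing over $k$, so that the $k$-summation produces the genuine decay factor $(1+j^2)^{\frac12-(b+1)}$ in $j$ that makes the final $j$-series converge; a weight in $k$ alone would leave a divergent $\sum_j 1$ and lose the lemma. (The cruder alternative of estimating $\|a\|_{\fM_p}^p\le\sum_{j,k}|\widehat{a}(j,k)|^p$ directly via~\eqref{beq3} applied to the doubly-indexed series only closes for $p\ge\frac12$, which is why the two-step decomposition is needed for small $p$.)
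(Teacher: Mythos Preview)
Your proof is correct and reaches the same conclusion as the paper, but the organization of the Fourier analysis is genuinely different. The paper expands $a$ as a Fourier series \emph{only in the second variable}, writing $a(x,y)=\sum_{n}a_n(x)e^{iny}$ and applying Lemma~\ref{remark:p} with the weighted decomposition $\varphi_n=|n|^b a_n$, $\psi_n=|n|^{-b}e_n$; this reduces matters to bounding $\sup_n|n|^b\|a_n\|_\infty$, which is then handled by a one-dimensional Sobolev embedding $\|h\|_\infty\lesssim\|h\|_2+\|h'\|_2$ applied in the $x$-variable, followed by Parseval in $y$. You instead take the full two-dimensional Fourier expansion and group by the first-variable frequency $j$; the key is your use of the \emph{joint} weight $(1+j^2+k^2)^{(b+1)/2}$ in the Cauchy--Schwarz step, which converts the $k$-summation into the decay factor $(1+j^2)^{-(2b+1)/4}$ and then allows a clean H\"older argument over $j$. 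Your route is more symmetric in the two variables and in fact only needs $b>\tfrac1p-1$ for the final $j$-series to converge (though the statement assumes $b>\tfrac1p$); the paper's asymmetric route makes the role of the ``extra'' derivative in the first variable more transparent, since it is spent precisely on the one-dimensional embedding $\|a_n\|_\infty\lesssim\|a_n\|_2+\|a_n'\|_2$. Both arguments ultimately rest on Lemma~\ref{remark:p} and~\eqref{beq3} in the same way.
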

\begin{proof} We use Fourier representation in the second argument
$$a(x,y)=\sum_{n\in\mathbb{Z}}a_n(x)e_n(y)=a_0(x)+\sum_{n\neq0}|n|^ba_n(x)\cdot |n|^{-b}e_n(y),$$
where $e_n (y)= e^{iny}$, $y\in \bR$ and $a_n(x) = \frac{1}{2\pi}  \int_{-\pi}^{\pi} a(x,y)e^{-iny} dy$, $x\in \bR$, $n\in \bZ$.
By Lemma \ref{remark:p}, we have
\begin{align}\label{ineq:fora}
\|a\|_{\fM_p}&\leq\|a_0\|_{\infty}+\Big\|\Big\{\||n|^ba_n\|_\infty\Big\}_{n\neq0}\Big\|_{\infty}\Big\|\Big\{\||n|^{-b}e_n\|_\infty\Big\}_{n\neq0}\Big\|_p\nonumber\\
&=\|a_0\|_{\infty}+c_{p,b}\Big\|\Big\{\||n|^ba_n\|_\infty\Big\}_{n\neq0}\Big\|_{\infty},
\end{align}
where $c_{p,b} : = (\sum_{n\ne 0} |n|^{-pb})^{1/p}<\infty $.
For every continuously differentiable function $h$ on $\bT$, integrating by part, we get (see e.g. \cite[Theorem 5.1]{Sugiura})
\begin{align}\label{eq:handh'}
\hat{h}(n) = \frac{1}{2\pi}   \int_{-\pi}^\pi h(t)e^{-int}  d t=\frac{1}{2\pi}  \frac{1}{ni}\int_{-\pi}^\pi h'(t)e^{-int}  d t =\frac{1}{ni} \hat{h'}(n),
\end{align}
where $\hat{h}(n)$ stands for the $n$-th Fourier coefficient of $h$.
By Cauchy's inequality,
we have
\begin{align}\label{eq:bypart}
\|h\|_{\infty}
\leq \sum_{n}|\hat{h}(n)|
&\le |\hat{h}(0)| +  \left(\sum_{n\ne 0} |\hat{h}(n)|^2 \cdot n^2 \right)^{1/2} \left(  \sum_{n\ne 0} n^{-2} \right)^{1/2}  \nonumber \\
&\leq   \|h\|_2+  \left(\sum_{n\ne 0} |\widehat{h'}(n)|^2  \right)^{1/2} \left(\frac{\pi^2}{3}\right)^{1/2}    \nonumber  \\
&\le  \|h\|_2+ \left(\frac{\pi^2}{3} \right)^{1/2} \left\|h'\right\|_2,
\end{align}
Applying \eqref{eq:bypart} to every $a_n$ in \eqref{ineq:fora}, we have
\begin{align*}
\|a\|_{\fM_p}
& \stackrel{\eqref{ineq:fora}}{\le} \|a_0\|_{\infty}+c_{p,b}\Big\|\Big\{\||n|^ba_n\|_\infty\Big\}_{n\neq0}\Big\|_{\infty},\\
&\stackrel{\eqref{eq:bypart}}{\le}   \|a_0\|_2+ \left(\frac{\pi^2}{3} \right)^{1/2} \left\|a_0'\right\|_2     \\
&\qquad \qquad +
c_{p,b}\left\|\left\{ \left\||n|^b\left( \|a_n \|_2+ \left(\frac{\pi^2}{3} \right)^{1/2} \left\|a_n'\right\|_2 \right)\right\|_\infty\right\}_{n\neq0}\right\|_{\infty},\\
&\leq \|a_0\|_2+ \left(\frac{\pi^2}{3}
\right)^{1/2}\|a_0'\|_2\\
&\qquad \qquad +c_{p,b}\Big\|\Big\{|n|^b\|a_n\|_2\Big\}_{n\neq0}\Big\|_{\infty}+ \left(\frac{\pi^2}{3}
\right)^{1/2} c_{p,b}\Big\|\Big\{|n|^b\|a_n'\|_2\Big\}_{n\neq0}\Big\|_{\infty}\\
&\leq \|a_0\|_2+ \left(\frac{\pi^2}{3} \right)^{1/2}\|a_0'\|_2\\
&
\qquad \qquad +c_{p,b}\Big\|\Big\{|n|^b\|a_n\|_2\Big\}_{n\neq0}\Big\|_2+ \left(\frac{\pi^2}{3}\right)^{1/2} c_{p,b}\Big\|\Big\{|n|^b\|a_n'\|_2\Big\}_{n\neq0}\Big\|_2.
\end{align*}
By Pythagorean identity and for every $x\in \bR$,  letting  $a_x(y)=a(x,y)$, $y\in \bR$, we obtain that
$$\Big\|\Big\{|n|^b\|a_n\|_2\Big\}_{n\neq0}\Big\|_2
=\left\| \sum_{n\neq0} |n|^b a_n e_n \right\|_2
 \stackrel{\eqref{eq:handh'}}{=} \left\| \sum_{n\neq0}  |\widehat{a_x^{(b)}}(n )| e_n \right\|_2   \le  \left\|
 \frac{\partial^b}{\partial_2^b}a
 \right\|_2,$$
and, for every $x\in \bR$, letting  $c_x(y)=\frac{\partial}{\partial_1 } a(x,y)$, $y\in \bR$, we have
$$\Big\|\Big\{|n|^b\|a_n'\|_2\Big\}_{n\neq0}\Big\|_2
=\left\| \sum_{n\neq0} |n|^b a'_n e_n \right\|_2
 \stackrel{\eqref{eq:handh'}}{=}
  \left\| \sum_{n\neq0}    |\widehat{c_x^{(b)}}(n)|  e_n \right\|_2
 \le
\left\|\frac{\partial ^{b+1}}{\partial_2^b\partial_1 } a\right\|_2.$$
This completes the proof.
\end{proof}

For $r>0$, the dilation operator $\sigma_r$ are  acting on
any bounded Borel function $a:\bR  \rightarrow \bC$  such that
$$(\sigma_r a ) (s):= a\left(\frac{s}{r}\right),\quad s\in\mathbb{R}.$$
In particular, if $f\in S_{d,\theta}$, then
\begin{align}\label{eqdia}
\left\|\sigma_r f\right\|_{S_{d,\theta}}  = \max_{0\le k \le d } \sup _{x\ne 0}    \frac{|(\sigma_r f)^{(k)}(x)|}{|x|^{\theta  -k}}
&=  \max_{0\le k \le d } \sup _{x\ne 0}    \frac{|r^{-k} f^{(k)}(\frac xr)|}{|x|^{\theta  -k}}
\nonumber
 \\
 &
 =  \max_{0\le k \le d } \sup _{x\ne 0}    \frac{| f^{(k)}(\frac x r )|}{ r^\theta  |\frac x r|^{\theta  -k}}
 =\frac{1}{r^\theta}\left\|
f\right\|_{S_{d,\theta}}.
\end{align}

Similarly, for $r>0$, the dilation operator $\sigma_r$ are  acting on
any bounded Borel function $a:\bR^2 \rightarrow \bC$  such that
$$(\sigma_r a ) (s,t):= a\left(\frac{s}{r},\frac{t}{r} \right),\quad s,t\in\mathbb{R}.$$
\begin{remark}\label{remark:sigma} For any bounded Borel function $a:\bR^2 \rightarrow \bC$ and self-adjoint operators $A,B\eta \cM$,
%$$T_{\sigma_ra}^{rA,rB}(V)=T^{A,B}_a(V),\quad V\in L_2(\cM,\tau).$$
%Indeed, since $T^{A,B}_a(V)$ can be review $\sum_{j,k}a( \lambda_j,\mu_k  )  E(\lambda_j)^A V E(\mu_k)^B$ \cite[p. 32]{ST}, it follows that
%$$T^{A,B}_a(V) =\sum_{j,k}a( \lambda_j,\mu_k  )  E^A(\lambda_j) V E^B(\mu(k))=\sum_{j,k}a( \lambda_j,\mu_k  )  E^{rA}(r\lambda_j) V E^{rB}(r\mu_k)=T^{rA,rB}_{\sigma_ra} (V)$$
and  for any $ V\in L_2(\cM,\tau)$,  we have
$$T_a^{A,B}(V) = \int_{\bR^2} a(\lambda,\mu) dE_A(\lambda)VE_B(\mu)= \int_{\bR^2} a\left( \frac \lambda r, \frac \mu r\right) dE_{rA}( \lambda)VE_{rB}(\mu) = T_{\sigma_r a}^{rA,rB}(V).
$$
In particular, we have
$$\|\sigma_ra\|_{\fM_p}=\|a\|_{\fM_p}.$$
\end{remark}

 The following result is an easy consequence of \cite[Theorem 3.1]{PS2008} (by taking $x=pq$).
 Similar results have been proven in \cite[Proposition 6.11]{dS} and \cite[Lemma 7.3]{dWS}.
% For the sake of completeness, we provide a proof below.

\begin{lem}\label{lemma 3.6}
Assume that $f:\bR\rightarrow \bC$ is a Lipschitz function.
 Let $A,B\in S(\cM,\tau)$ be self-adjoint operators and let $p,q$ be spectral  projections of  $A$ and $B,$ respectively. If $p(A-B)q\in L_2(\cM,\tau),$ then
$$T^{A,B}_{\fD f}(p(A-B)q)=p(f(A)-f(B))q.$$
\end{lem}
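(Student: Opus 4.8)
The plan is to reduce the general statement to the $L_2$-theory of double operator integrals, where the identity \eqref{eq:dif} already holds, and then use a spectral-cutting argument to pass to spectral subprojections $p$ and $q$. First I would observe that since $f$ is Lipschitz and $A,B\in S(\cM,\tau)$ are self-adjoint, for bounded spectral projections $p\in\{E^A\}$ and $q\in\{E^B\}$ (say $p=E^A([-N,N])$, $q=E^B([-N,N])$ for large $N$) the operators $pAp$ and $qBq$ are \emph{bounded} self-adjoint operators, and $p(A-B)q$ being in $L_2(\cM,\tau)$ guarantees the double operator integral $T^{A,B}_{\fD f}(p(A-B)q)$ is well-defined as an element of $L_2(\cM,\tau)$. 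The key algebraic fact is that multiplying a double operator integral $T^{A,B}_a(V)$ on the left by a spectral projection $p$ of $A$ and on the right by a spectral projection $q$ of $B$ simply restricts the integration to the corresponding product of spectral sets: $pT^{A,B}_a(V)q = T^{A,B}_{a}(pVq)$ when the symbol and $V$ are already supported appropriately, or more precisely $p\,T^{A,B}_a(V)\,q = T^{A,B}_{\chi_{\supp p}\otimes\chi_{\supp q}\cdot a}(V)$.

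The main step is then to invoke \cite[Theorem 3.1]{PS2008}, which is the cited source for this lemma (with $x = pq$ in their notation). Concretely, the strategy is: start from the identity $f(A)-f(B) = T^{A,B}_{\fD f}(A-B)$, valid when $A-B\in L_2(\cM,\tau)$; but here we only assume $p(A-B)q\in L_2(\cM,\tau)$, so this global identity may not apply directly. Instead I would work with the truncated operators. Setting $A_p := pAp + p^\perp A p^\perp$ — or more cleanly, using that $p$ commutes with $A$ — one has $pf(A) = f(A)p = f(pAp + \text{(part on } p^\perp))p$, so that $p(f(A)-f(B))q$ only sees the spectral data of $A$ on $\supp p$ and of $B$ on $\supp q$. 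On these sets $f$ restricted to a compact interval can be taken to be Lipschitz with the same constant, and the $L_2$ double operator integral theory (via \cite{PS2008}, or the spectral-shift/perturbation-determinant machinery therein) gives $p(f(A)-f(B))q = T^{A,B}_{\fD f}(p(A-B)q)$ directly, because $p(A-B)q\in L_2$ is exactly the hypothesis needed to make both sides meaningful and the abstract identity valid when one compresses by $p$ on the left and $q$ on the right.

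The hard part — or at least the point requiring care — is justifying the passage from the \emph{global} identity \eqref{eq:dif}, which requires $A-B\in L_2(\cM,\tau)$, to the \emph{compressed} identity under the much weaker hypothesis $p(A-B)q\in L_2(\cM,\tau)$, since $A$ and $B$ themselves may be genuinely unbounded and far apart. The resolution is that \cite[Theorem 3.1]{PS2008} is precisely formulated to handle this: it establishes the double operator integral formula for a Lipschitz function under a ``local'' square-integrability assumption of exactly this form, and one need only translate their hypothesis by taking their parameter $x$ to be the operator $p\,(A-B)\,q$ (equivalently $x = pq$ conjugating the difference). I would therefore structure the proof as: (i) recall \eqref{eq:dif} and the bimodule property of double operator integrals; (ii) note $p\in\{E^A\}$, $q\in\{E^B\}$ commute with $A$ and $B$ respectively, so $pf(A) = f(A)p$ and likewise for $B$; (iii) apply \cite[Theorem 3.1]{PS2008} with the substitution $x = pq$ to obtain $p(f(A)-f(B))q = T^{A,B}_{\fD f}(p(A-B)q)$; and (iv) remark that this coincides with \cite[Proposition 6.11]{dS} and \cite[Lemma 7.3]{dWS} in the cases treated there. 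Since the excerpt explicitly allows us to cite \cite[Theorem 3.1]{PS2008}, the proof is genuinely short — essentially a one-line deduction once the hypothesis is matched to that theorem's statement.
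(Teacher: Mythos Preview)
Your proposal is correct and follows exactly the paper's approach: the paper does not give an independent argument but simply notes that the lemma is an easy consequence of \cite[Theorem 3.1]{PS2008} upon taking $x=pq$, with parallel results in \cite[Proposition 6.11]{dS} and \cite[Lemma 7.3]{dWS}. Your write-up expands on the motivation and the role of the spectral-projection bimodule property, but the substance---reduce to the cited theorem with $x=pq$---is identical.
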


\section{Double operator integrals of divided differences on $L_p(\cM,\tau)$, $0< p\le 1$}\label{DOI}
%In this section, we only consider the case when $0<p\le \theta <1$.

%We provide a simple proof for the  case when $p>\theta$, which extends %\cite{PS2} to the semifinite case .
%\begin{theorem}Let $\theta\in (0,1)$ and $p>\theta$.
%Then, every $\theta$-H\"older function is automatically operator $\theta$-H\"older.
%\end{theorem}
%\begin{proof}
%Let $f\in \Lambda_\theta$.
%The case when $p=\infty$ is known (see \cite{AP3,APPS}).
%Let $h(t)=sgn(t)|t|^\theta $ and $g=f\circ h^{-1}$.
%Clearly, $g$ is Lipschitz.
%Since Lipschitz function is operator-Lipschitz on $L_q(\cM,\tau)$, $1<q<\infty$ %\cite{PS2}, it follows that for every $A,B\in S(\cM,\tau)$,
%$$\|f(A) -f(B)\|_{p/\theta} = \|g(h(A))  -g(h(B))\|_{p /\theta} \le c_{p/ %\theta}\|g\|_{Lip} \|h(A) -h(B)\|_{p/\theta}.$$
%By the generalized  Birman-Koplienko-Solomjak inequality (see e.g. %\cite{Ricard}), we have that
%$$\|h(A) -h(B)\|_{p/\theta} \le c_{p,\theta}\|A-B\|_p^\theta, $$
%which completes the proof.
%\end{proof}

Let  $0<\theta<1$.
Assume that $\cH$ is a separable Hilbert space.
In this section, we deal with the subspace $S_{d,\theta}$ of the $\theta$-H\"{o}lder class $\Lambda _\theta$.
% which consists of functions $f\in C^{d}(\bR\backslash {x_0})\cap C(\bR)$  such that
% $$ \|f\|_{S_{d,\theta}} :=\max_{0\le k \le d } \sup _{x\ne 0}   |x|^{-\theta +k} |f^{(k)}(x)|<\infty  .$$
%We note that   $\left\|\cdot \right\|_{S_{d,\theta}}$ is a norm on $\{f\in C^\infty (\bR)\mid \left\|f \right\|_{S_{d,\theta}}<\infty \}$ which is generated by Schwartz semi-norms with multi-induces $(k-\theta, k)$, $0\le k\le d$.

%{\color{blue}
%
%\begin{theorem} Let $0<p\leq 1.$ There exists a constant $c_{p,\theta}$ such %that, for every $f\in S_{d,\theta},$ $d>\frac1p+2,$ we have
%$$\|\fD f\cdot\chi_{(a,\infty)\times(0,\infty)}\|_{\fM_p}\leq %c_{p,\theta}\|f\|_{S_{d,\theta}}a^{\theta-1}.$$
%\end{theorem}
%}

We define $d:=d(p)$, $p>0$,
%$0<\theta<1$,
by setting
that $d$ is  the minimal integer such that
 $$d > \frac1p+2 , ~p\le   1$$
 and $$ d(p )=d(1)=4, ~p>1.$$
% when $p\le   1$ and, if $p>1$, $d(p )=d(1)=4$.
 As mentioned before,  when $p=\infty$, the operator $\theta$-H\"older functions has been  described  in \cite{AP3,NF,APPS}.
  Hence, we only consider the case for $0<p<\infty$.
Without loss of generality, we only prove the case for $0<p\le 1$.
Indeed,
the case when $p>1$ follows immediately from
 the case for $p =1 $,
 the Hardy--Littlewood--Polya inequality \cite[Corollary 2.5]{HSZ} (see also \cite[Theorem 11]{DSZ} and \cite[Chapter I, Theorem D.2]{MOA}) and Theorem \ref{th:subm}.

The argument in \cite{Ricard} heavily rests on the homogeneity of the function $t\mapsto |t|^\theta$, $t\in \bR$, which is one of the difficulties we encountered. A different method from that in  \cite{Ricard} is required in order to consider a much more general class of operator $\theta$-H\"older functions.

%Let $\varphi:[ -\pi,\pi]\rightarrow [0,1]\in C^{\infty}[ -\pi,\pi]$ be a function having  support in $[-\frac{1}{4},\frac{3}{4}]$ that is identically $1$ on $[0,\frac{1}{2}]$,
% and
% $\psi:[0,2\pi]\rightarrow [0,1]\in C^{\infty}[ 0,2\pi]$ be
% a  function having  support in $[\frac{7}{8},3]$ that is identically $1$ on $[1,2]$.

%Fix an arbitrary  infinitely continuously differentiable function $$\phi:[0,2\pi]\to[0,1]$$ with support in $\left[\frac{1}{4},3\right]$ which  is identically $1$ on $\left[\frac{1}{2},2\right]$.

%In what follows, $C^m(I)$ is a collection of all bounded $m$ times differentiable functions on $I$ with bounded derivatives. Similarly, $C^m_{{\rm loc}}(I)$ is a collection of all $m$ times differentiable functions on $I.$

%\begin{thm}\label{main thm} Let $p>0$ and let $\theta\in(0,1).$ For $f\in S^{d,\theta},$ $d=d(p,\theta),$ and for all bounded $A,B$ with $A-B\in L_p,$ we have
%$$\|f(A)-f(B)\|_{\frac{p}{\theta}}\leq c_{p,\theta}\|A-B\|_p^{\theta}.$$
%\end{thm}

Throughout this section, %we fix $\tau$-measurable operators  $A$ and $B.$ Let
we always assume that $p\in (0,1]$ and $f\in S_{d,\theta}$.

We note that
a result similar  to the following lemma  was claimed in \cite[Theorem 9.6]{BS1977} under slightly weaker conditions.
We would like to thank Edward McDonald for providing us a proof of \cite[Theorem 9.6]{BS1977}.
\begin{lem}\label{local lemma}  Let $f\in S_{d,\theta}$ and let $\phi$ be a smooth function, If $\phi $ is supported on $(\varepsilon, \pi)$, then
$$\|(\phi\otimes\phi)\cdot\fD f\|_{\fM_p}\leq c_{p,\phi}\|f\|_{S_{d,\theta}}.$$
Here, $(\phi\otimes\phi)\cdot\fD f$ is defined by setting $(\phi\otimes\phi)\cdot\fD f (x,y ) = \phi(x)\phi(y)\fD f(x,y)$, $x,y\in (\varepsilon, \pi)$.
\end{lem}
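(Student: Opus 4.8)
The plan is to reduce the claim to the two building blocks already prepared in the excerpt: the symbols $\alpha,\beta$ of Corollary~\ref{alpha beta lemma}, which handle the interaction of ``far apart'' spectral scales, and Lemma~\ref{multiplier fourier lemma}, which handles a single dyadic scale via a Fourier/Sobolev estimate on $\bT^2$. The function $(\phi\otimes\phi)\cdot\fD f$ is supported in $(\varepsilon,\pi)^2$, so both arguments live in a fixed bounded interval bounded away from $0$; this is precisely the regime where $f$ and all its derivatives up to order $d$ are well-behaved (no singularity at the origin intervenes), and where $\fD f$ is a smooth function on $(\varepsilon,\pi)^2$. The quantitative information we have is $|f^{(k)}(x)|\le \|f\|_{S_{d,\theta}}|x|^{\theta-k}\le \|f\|_{S_{d,\theta}}\varepsilon^{\theta-k}$ for $0\le k\le d$ and $x\in(\varepsilon,\pi)$, so on this interval $f$ is a $C^d$ function with $C^d$-norm bounded by $c_{\varepsilon}\|f\|_{S_{d,\theta}}$.

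First I would make $(\phi\otimes\phi)\cdot\fD f$ into a $2\pi$-periodic function on $\bR^2$ that agrees with it on $(\varepsilon,\pi)^2$: since $\phi$ is smooth and supported in $(\varepsilon,\pi)$, the function $g(x,y):=\phi(x)\phi(y)\fD f(x,y)$ extends by $0$ outside $(\varepsilon,\pi)^2$ to a function in $C^{d-1}(\bR^2)$ (one derivative is lost because $\fD f$ involves an integral of $f'$, and $f'$ is only $C^{d-1}$; more precisely $g\in W^{d,2}$ with compact support, using \eqref{difff} to write $\fD f(x,y)=\int_0^1 f'(tx+(1-t)y)\,dt$ and differentiating under the integral sign). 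Periodising with period $2\pi$ keeps it in $W^{d,2}(\bT^2)$ with norm $\le c_{\phi,\varepsilon}\|f\|_{S_{d,\theta}}$. Now apply Lemma~\ref{multiplier fourier lemma} with $b\in\bZ$, $b>1/p$: since $d=d(p)$ was chosen with $d>\tfrac1p+2$, we may take $b=d-2$ so that $b+1=d-1\le d$, hence $g\in W^{b+1,2}(\bT^2)$ and
$$\|g\|_{\fM_p}\le c_{p,b}\,\|g\|_{W^{b+1,2}(\bT^2)}\le c_{p,\phi,\varepsilon}\,\|f\|_{S_{d,\theta}}.$$
Because $T_g^{A,B}$ only sees the spectral subspaces of $A$ and $B$ corresponding to $[\varepsilon,\pi)$ (where $g$ agrees with $(\phi\otimes\phi)\cdot\fD f$), we conclude $\|(\phi\otimes\phi)\cdot\fD f\|_{\fM_p}\le c_{p,\phi}\|f\|_{S_{d,\theta}}$, which is exactly the assertion.

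The main obstacle I anticipate is bookkeeping the regularity of $\fD f$ near the diagonal $x=y$: away from the diagonal $\fD f$ is a manifest quotient, but its smoothness across the diagonal must be obtained from \eqref{difff}, and one must check that differentiating $\int_0^1 f'(tx+(1-t)y)\,dt$ up to order $b+1$ in $(x,y)$ produces only integrals of $f^{(2)},\dots,f^{(b+2)}$, all of which are controlled on $[\varepsilon,\pi]$ by $\|f\|_{S_{d,\theta}}$ provided $b+2\le d$, i.e. $b\le d-2$ — this is where the precise lower bound $d>\tfrac1p+2$ (rather than $d>\tfrac1p$) is consumed: we need one extra derivative for the Fourier estimate ($b+1$) and one more because $\fD f$ costs a derivative relative to $f$. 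A second, minor point is that the periodisation must not introduce jumps; this is automatic since $g$ and its derivatives up to order $d-1$ vanish on the boundary of $(\varepsilon,\pi)^2$ because $\phi$ is a Schwartz-type bump vanishing to all orders there. Once these two regularity points are settled the estimate follows mechanically from Lemma~\ref{multiplier fourier lemma}.
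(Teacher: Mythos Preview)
Your proposal is correct and follows essentially the same route as the paper: periodise $(\phi\otimes\phi)\cdot\fD f$, apply Lemma~\ref{multiplier fourier lemma} with $b=d-2$, and control $\|\fD f\|_{W^{b+1,\infty}([\varepsilon,\pi]^2)}$ by differentiating the integral representation \eqref{difff} up to order $d-1$, which consumes $f^{(d)}$ and hence $\|f\|_{S_{d,\theta}}$ with constants depending only on $\varepsilon$ and $\phi$. Your opening reference to the symbols $\alpha,\beta$ of Corollary~\ref{alpha beta lemma} is a distraction---they play no role in this lemma (they enter only in the next one, Lemma~\ref{core lemma})---but the actual argument you write does not use them and matches the paper's.
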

\begin{proof}  Let $a:\mathbb{R}^2\to\mathbb{C}$ be a Borel function $2\pi$-periodic in both arguments such that
$$a=(\phi\otimes\phi)\cdot\fD f\mbox{ on }[\varepsilon,\pi]^2.$$
By Lemma \ref{multiplier fourier lemma}, for any $b>1/p$,  we have
$$\|(\phi\otimes\phi)\cdot\fD f\|_{\fM_p}\leq c_{p,b}\|a \|_{W^{b+1,2}(\mathbb{T}^2)}.$$
Since $\phi=0$ on $(-\varepsilon,\varepsilon)$, it follows that there exists a constant $c_b$ such that
\begin{align}\label{ineq:Kxy}
\|a \|_{W^{b+1,2}(\mathbb{T}^2)}
&=
  \|(\phi\otimes\phi)\cdot\fD f\|_{W^{b+1,2}([-\pi,\pi]^2)} \nonumber \\
&\leq c_b  \Big(\max_{0\leq k\leq b+1}\left\|\phi^{(k)}\right\|_{\infty}\Big)^2\|\fD f\|_{W^{b+1,\infty}([\varepsilon,\pi]\times [\varepsilon,\pi] )},
\end{align}
where $\|\fD f\|_{W^{b+1,\infty}([\varepsilon,\pi]\times [\varepsilon,\pi] )} = \sum_{0\le   m+n \le b+1} \left\| \frac{\partial^{m+n}}{(\partial_2)^m (\partial_1)^n}   (\fD f ) \chi_{[\varepsilon,\pi]\times [\varepsilon,\pi]} \right\|_\infty $.

Take $b=d-2$.
For $x,y \in [\epsilon,\pi]$, we have $tx + (1-t)y \ge\varepsilon$.
By the Leibniz integral rule, we obtain that for any $m,n\ge 0$ with $m+n \le d-1$,
\begin{align*}
\left|
\frac{ \partial^{n+m} } { \partial y^m \partial x^n}(\fD f)(x,y )
\right|
&\stackrel{\eqref{difff}}{=}
\left|
\int_0^1 \frac{\partial^{n+m} } { \partial y^m \partial x^n }  f '(tx + (1-t)y ) dt
\right|
\nonumber \\
&~\le \left|\int_0^1 t^n (1-t)^m  f^{(1+m+n)}(tx + (1-t)y ) dt \right| \nonumber \\
&~\le  \int_0^1  | f^{(1+m+n)}(tx + (1-t)y )| dt \nonumber\\
&~\stackrel{\eqref{def:funct}}{\le}  \int_0^1  | tx + (1-t)y  | ^{-m-n-1+ \theta }\|f\|_{S_{d,\theta}} dt \nonumber\\
&~\le \varepsilon^{-m-n-1}\|f\|_{S_{d,\theta}},
\end{align*}
which together with \eqref{ineq:Kxy} completes the proof.
\end{proof}
The following lemma contains crucial estimates for our main result in this section, Theorem~\ref{th:3.5}.
\begin{lem}\label{core lemma}
There exists a constant $c_p>0 $ depending on $p$ only such that
\begin{enumerate}[{\rm (i)}]
\item\label{corea}
$$\left\|
(\fD f)\cdot\chi_{[\frac12,1)\times(0,\frac14)}
\right\|_{\fM_p}\leq c_p \left\| f  \right \|_{S_{0,\theta}}.$$
\item\label{coreb}
$$\left\|
(\fD f)\cdot\chi_{[\frac12,1)\times(2,\infty )}
\right\|_{\fM_p}\leq c_p  \left\| f \right \|_{S_{0,\theta}}.$$
\item\label{corec}
$$\left\|
(\fD f)\cdot\chi_{[\frac12,1)\times[\frac14,2]}
\right\|_{\fM_p} \leq  c_p  \left\| f \right \|_{S_{d,\theta}} .$$
\end{enumerate}
\end{lem}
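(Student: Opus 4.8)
The plan is to treat the three regions separately, with (i) and (ii) following quickly from the auxiliary symbols $\alpha,\beta$ of Corollary \ref{alpha beta lemma}, while (iii) will be the real work and will invoke the local estimate of Lemma \ref{local lemma}. For part (i), on the rectangle $[\tfrac12,1)\times(0,\tfrac14)$ the two spectral variables are separated from the diagonal, so on this set one may write
$$
(\fD f)(s,t)\cdot\chi_{[\frac12,1)\times(0,\frac14)}(s,t)=\frac{f(s)-f(t)}{s-t}\chi_{[\frac12,1)}(|s|)\chi_{(0,\frac14)}(|t|)
=\big(f(s)\chi_{[\frac12,1)}(|s|)\big)\cdot\alpha(s,t)-\alpha(s,t)\cdot\big(f(t)\chi_{(0,\frac14)}(|t|)\big),
$$
so by submultiplicativity \eqref{beq1}, the $p$-triangle inequality \eqref{beq2} for $\fM_p$, Remark \ref{prop:1dbounded}, and Corollary \ref{alpha beta lemma}, the $\fM_p$-norm is bounded by a constant times $\sup_{|s|\in[\frac12,1)}|f(s)|+\sup_{|t|<\frac14}|f(t)|\le c\,\|f\|_{S_{0,\theta}}$, using that $|f(x)|\le\|f\|_{S_{0,\theta}}|x|^\theta$ is bounded on the bounded set $|x|<1$. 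Part (ii) is entirely analogous, using $\beta$ in place of $\alpha$: on $[\tfrac12,1)\times(2,\infty)$ one writes $\fD f(s,t)=\frac{f(s)}{s-t}-\frac{f(t)}{s-t}$ and observes $\frac{1}{s-t}\chi\cdot\chi=-\frac1t\beta(s,t)$ while $\frac{f(t)}{s-t}\chi\cdot\chi=-\frac{f(t)}{t}\beta(s,t)$; here $\sup_{|t|>2}|f(t)/t|\le\|f\|_{S_{0,\theta}}\sup_{|t|>2}|t|^{\theta-1}<\infty$ since $\theta<1$, which is exactly where the decay of the $S_{0,\theta}$-bound at infinity is used, and again one applies \eqref{beq1}–\eqref{beq2}, Remark \ref{prop:1dbounded} and Corollary \ref{alpha beta lemma}.

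For part (iii), the set $[\tfrac12,1)\times[\tfrac14,2]$ is a compact rectangle bounded away from the origin, so the idea is to multiply by a smooth bump. Choose a smooth function $\phi$ supported on $(\varepsilon,\pi)$ with $\varepsilon<\tfrac14$ and with $\phi\equiv1$ on a neighbourhood of $[\tfrac14,2]$ (after rescaling by some fixed dilation $\sigma_r$ if needed to fit everything inside $(\varepsilon,\pi)$; by Remark \ref{remark:sigma} this does not change the $\fM_p$-norm, and by \eqref{eqdia} it only rescales $\|f\|_{S_{d,\theta}}$ by the fixed factor $r^{-\theta}$). Then on $[\tfrac12,1)\times[\tfrac14,2]$ one has $\chi_{[\frac12,1)\times[\frac14,2]}=(\phi\otimes\phi)\cdot\chi_{[\frac12,1)\times[\frac14,2]}$, so
$$
\big\|(\fD f)\cdot\chi_{[\frac12,1)\times[\frac14,2]}\big\|_{\fM_p}
=\big\|\big((\phi\otimes\phi)\cdot\fD f\big)\cdot\chi_{[\frac12,1)\times[\frac14,2]}\big\|_{\fM_p}
\le\big\|\chi_{[\frac12,1)\times[\frac14,2]}\big\|_{\fM_p}\cdot\big\|(\phi\otimes\phi)\cdot\fD f\big\|_{\fM_p}.
$$
The factor $\|(\phi\otimes\phi)\cdot\fD f\|_{\fM_p}\le c_{p,\phi}\|f\|_{S_{d,\theta}}$ is exactly Lemma \ref{local lemma}. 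The remaining factor $\|\chi_{[\frac12,1)\times[\frac14,2]}\|_{\fM_p}$ is a bounded constant: the indicator of a rectangle is a tensor product $\chi_{[\frac12,1)}(|s|)\cdot\chi_{[\frac14,2]}(|t|)$, so by Lemma \ref{remark:p} (a one-term decomposition) its $\fM_p$-norm is at most $1$.

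The main obstacle is organising part (iii) so that the cutoff $\phi$ simultaneously (a) contains the target rectangle with $\phi=1$ there, (b) has support inside a fixed interval $(\varepsilon,\pi)$ bounded away from $0$ so that Lemma \ref{local lemma} applies with constants depending only on $p$ and $\phi$ (hence only on $p$, once $\phi$ is fixed once and for all), and (c) is compatible with whatever fixed dilation is used to bring $[\tfrac14,2]$ inside $(\varepsilon,\pi)$; one must be a little careful that the dilation factor $r$ and the bump $\phi$ are absolute constants so that the final constant $c_p$ genuinely depends on $p$ alone, with the $\|f\|_{S_{d,\theta}}$-dependence entirely linear as claimed. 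Everything else is a routine assembly of \eqref{beq1}, \eqref{beq2}, Remark \ref{prop:1dbounded}, Remark \ref{remark:sigma}, \eqref{eqdia}, Corollary \ref{alpha beta lemma} and Lemmas \ref{remark:p} and \ref{local lemma}.
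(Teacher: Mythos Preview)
Your proposal is correct and follows essentially the same route as the paper: parts (i) and (ii) factor the divided difference through the symbols $\alpha$ and $\beta$ of Corollary \ref{alpha beta lemma} together with one-variable factors bounded via Remark \ref{prop:1dbounded}, and part (iii) inserts a fixed smooth cutoff $\phi$ and invokes Lemma \ref{local lemma}. The only superfluous step is your concern about a preliminary dilation in (iii): since $2<\pi$, one may simply take $\phi$ supported in $(\tfrac18,\pi)$ with $\phi\equiv 1$ on $[\tfrac14,2]$, exactly as the paper does, so no rescaling via Remark \ref{remark:sigma} is needed.
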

\begin{proof}  \eqref{corea}. Let $\alpha$ be defined as in Corollary \ref{alpha beta lemma} and let $a_0$ and $a_1$ be such that
$$a_0(s,t):=f(s)\chi_{(0,1)}(s),\quad a_1(s,t):=f(t)\chi_{(0,1)}(t),\quad s,t\in\mathbb{R}.$$
It is obvious that
$$(\fD f)\cdot\chi_{[\frac12,1)\times(0,\frac14)}=a_0\alpha-a_1\alpha.$$
By \eqref{beq1} and \eqref{beq2}, we have
$$
\left\|(\fD) f\cdot\chi_{[\frac12,1)\times(0,\frac14)}\right\|_{\fM_p}^p
\leq
\left\| a_0 \right \|_{\fM_p}^p
\left\| \alpha\right\|_{\fM_p}^p
+
\left\| a_1\right\|_{\fM_p}^p
\left\| \alpha \right\|_{\fM_p}^p.$$
By Remark \ref{prop:1dbounded}, we have
$$\|a_0\|_{\fM_p}\leq\|f\chi_{(0,1)}\|_{\infty}\leq\|f\|_{S_{0,\theta}},\quad \|a_1\|_{\fM_p}\leq\|f\chi_{(0,1)}\|_{\infty}\leq\|f\|_{S_{0,\theta}}.$$
The assertion follows now from Corollary \ref{alpha beta lemma}.

\eqref{coreb}. Let $\beta$ be defined as in Corollary \ref{alpha beta lemma} and let $b_0$ and $b_1$ be such that
$$b_0(t,s)=\frac{f(t)}{t}\chi_{(1,\infty)}(t),\quad b_1(t,s)=t^{-1}\chi_{(1,\infty)}(t),\quad s,t\in\mathbb{R}.$$
It is obvious that
$$(\fD f)\cdot\chi_{[\frac12,1)\times(2,\infty )}=b_0\beta-a_0b_1\beta.$$
By \eqref{beq1} and \eqref{beq2}, we have
$$
\left\|
(\fD f)\cdot\chi_{[\frac12,1)\times(2,\infty )}\right\|_{\fM_p}^p
\leq
\left\| b_0\right\|_{\fM_p}^p
\left\| \beta\right\|_{\fM_p}^p+\left\| a_0\right\|_{\fM_p}^p \left\| b_1\right \|_{\fM_p}^p\left\| \beta \right\|_{\fM_p}^p.$$
By Remark \ref{prop:1dbounded}, we have
$$\left\| b_0\right\|_{\fM_p}\leq \sup_{t>1}\frac{|f(t)|}{t}\leq \left\| f\right\|_{S_{0,\theta}},$$
$$\left\| a_0\right\|_{\fM_p}\leq \left\| f\chi_{(0,1)} \right\|_{\infty}\leq \left\|f \right\|_{S_{0,\theta}},\quad \left\|b_1\right\|_{\fM_p}\leq\sup_{t>1}\frac1t=1.$$
The assertion follows now from Corollary \ref{alpha beta lemma}.

\eqref{corec} Let $\phi:[0,2\pi]\to[0,1]$ be a smooth function supported in $[\frac{1}{8},\pi]$ which is identically $1$ on $\left[\frac{1}{4},2\right].$ We have
$$(\fD  f) \cdot\chi_{[\frac12,1)\times[\frac14,2]}=\Big((\phi\otimes\phi)\fD f\Big)\cdot\chi_{[\frac12,1)\times[\frac14,2]}.$$
Therefore,
$$\left\|(\fD f )\cdot\chi_{[\frac12,1)\times[\frac14,2]} \right\|_{\fM_p}\leq \left\|(\phi\otimes\phi)\fD f\right\|_{\fM_p}\left\|\chi_{[\frac12,1)\times[\frac14,2]} \right\|_{\fM_p}.$$
 Note that $d$ depends on $p,$ while  $\phi$ is independent of $f,$ $\theta$ and $p$.
 The assertion follows now from Lemma \ref{local lemma}.
\end{proof}

\begin{proposition}\label{4.4}
There exists a constant $c_p$ depending on $p$ only such that
$$\left\|(\fD f) \cdot\chi_{[\frac12,1)\times(0,\infty)}\right\|_{\fM_p} \leq  c_p\|f\|_{S_{d,\theta}}.$$
$$\left\|(\fD  f) \cdot\chi_{(-1,-\frac12]\times(-\infty,0)}\right\|_{\fM_p} \leq  c_p\|f\|_{S_{d,\theta}}.$$
\end{proposition}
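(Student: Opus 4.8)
The plan is to read off both estimates from Lemma~\ref{core lemma}, using the disjoint decomposition $(0,\infty)=(0,\frac14)\cup[\frac14,2]\cup(2,\infty)$ for the first inequality and a reflection argument for the second. For the first inequality I would write
$$(\fD f)\cdot\chi_{[\frac12,1)\times(0,\infty)}=(\fD f)\cdot\chi_{[\frac12,1)\times(0,\frac14)}+(\fD f)\cdot\chi_{[\frac12,1)\times[\frac14,2]}+(\fD f)\cdot\chi_{[\frac12,1)\times(2,\infty)},$$
noting that each summand is a bounded Borel function lying in $\fM_p$ by Lemma~\ref{core lemma}. Applying the $p$-triangle inequality \eqref{beq2} twice, then the three bounds (i), (ii), (iii) of Lemma~\ref{core lemma}, and finally the trivial estimate $\|f\|_{S_{0,\theta}}\le\|f\|_{S_{d,\theta}}$, yields
$$\left\|(\fD f)\cdot\chi_{[\frac12,1)\times(0,\infty)}\right\|_{\fM_p}^p\le 3\,c_p^p\,\|f\|_{S_{d,\theta}}^p,$$
which is the first assertion with $3^{1/p}c_p$ in place of $c_p$.

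For the second assertion I would pass to the reflected function $\tilde f(x):=f(-x)$. Since $x\mapsto-x$ preserves $\bR\setminus\{0\}$, we have $\tilde f\in C^d(\bR\setminus\{0\})\cap C(\bR)$, and from $\tilde f^{(k)}(x)=(-1)^kf^{(k)}(-x)$ it follows that $\tilde f\in S_{d,\theta}$ with $\|\tilde f\|_{S_{d,\theta}}=\|f\|_{S_{d,\theta}}$; moreover a direct computation gives $(\fD\tilde f)(x,y)=-(\fD f)(-x,-y)$. Combining this with $\chi_{(-1,-\frac12]\times(-\infty,0)}(x,y)=\chi_{[\frac12,1)\times(0,\infty)}(-x,-y)$ gives
$$(\fD f)\cdot\chi_{(-1,-\frac12]\times(-\infty,0)}=-\,\sigma_{-1}\!\left((\fD\tilde f)\cdot\chi_{[\frac12,1)\times(0,\infty)}\right),$$
where $\sigma_{-1}a(s,t):=a(-s,-t)$. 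Arguing exactly as in Remark~\ref{remark:sigma}, but replacing $A,B$ by $-A,-B$ and using that $-A$, $-B$ have spectral measures $\lambda\mapsto E_A(-\lambda)$, $\mu\mapsto E_B(-\mu)$, one sees that $a\mapsto\sigma_{-1}a$ preserves $\|\cdot\|_{\fM_p}$; since $a\mapsto-a$ trivially does as well, the first assertion applied to $\tilde f$ gives
$$\left\|(\fD f)\cdot\chi_{(-1,-\frac12]\times(-\infty,0)}\right\|_{\fM_p}=\left\|(\fD\tilde f)\cdot\chi_{[\frac12,1)\times(0,\infty)}\right\|_{\fM_p}\le 3^{1/p}c_p\,\|f\|_{S_{d,\theta}}.$$
Renaming $3^{1/p}c_p$ as $c_p$ (or taking the larger of the two constants) completes the proof.

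Since the genuinely analytic work is already contained in Lemma~\ref{core lemma}, there is essentially no obstacle in this proposition: the only points requiring (routine) care are the bookkeeping of the reflection, namely the invariance $\|\tilde f\|_{S_{d,\theta}}=\|f\|_{S_{d,\theta}}$, the sign $(\fD\tilde f)(x,y)=-(\fD f)(-x,-y)$ in the divided difference, and the invariance of $\|\cdot\|_{\fM_p}$ under $a(s,t)\mapsto a(-s,-t)$.
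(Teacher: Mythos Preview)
Your proof is correct and, for the first inequality, identical to the paper's: the paper also invokes Lemma~\ref{core lemma}~\eqref{corea}--\eqref{corec} together with~\eqref{beq2}. For the second inequality the paper simply says ``the same argument in Lemma~\ref{core lemma} yields the validity of the second inequality'', whereas you make the symmetry precise via the reflection $\tilde f(x)=f(-x)$ and the invariance of $\|\cdot\|_{\fM_p}$ under $(s,t)\mapsto(-s,-t)$; this is a clean way of spelling out what the paper leaves implicit, and requires only the routine extension of Remark~\ref{remark:sigma} to $r=-1$ that you indicate.
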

\begin{proof}  The first inequality immediately follows from Lemma \ref{core lemma} \eqref{corea}, \eqref{coreb} and \eqref{corec} and \eqref{beq2}. The same argument in Lemma \ref{core lemma} yields the validity of the second inequality.
\end{proof}

%Recall that $d>  \frac{1}{p} +2$.
The following is a consequence of Remark
 \ref{remark:sigma}, which is the first main result of this section.

\begin{theorem}\label{th:3.5}
There exists a constant $C_{p}$ such that
$$\left\| (\fD f)  \cdot \chi_{[\frac{1}{2^{k+1}},\frac{1}{2^{k}})\times (0,\infty )}  \right\|_{\fM_p } \le
C_{p} \left\| f\right\|_{S_{d,\theta}} 2^{-k(\theta-1) }, ~k\in \bZ  .$$
%In particular, $ \left\|\fD f(s,t)\chi_{[2,\infty)} (s)\chi_{[0,\infty)}(t)\right\|_{\fM_p } \le   C_{\theta,p}\|f\|_{d,\theta} $ for a constant depending on $\theta$ and $p$ only.
\end{theorem}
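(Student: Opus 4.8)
The plan is to reduce the estimate on the dyadic annulus $[\tfrac{1}{2^{k+1}},\tfrac1{2^k})\times(0,\infty)$ to the already-proven estimate on the fixed annulus $[\tfrac12,1)\times(0,\infty)$ via the dilation invariance of the $\fM_p$-norm recorded in Remark~\ref{remark:sigma}. Concretely, for $k\in\bZ$ set $r=2^{-k}$ and observe that
$$(\fD f)\cdot\chi_{[\frac1{2^{k+1}},\frac1{2^k})\times(0,\infty)}=\sigma_{r}\Big(\big(\fD(\sigma_{r^{-1}}f)\big)\cdot\chi_{[\frac12,1)\times(0,\infty)}\Big),$$
which follows from the elementary identities $\fD(\sigma_{r^{-1}}f)(x,y)=r\,(\fD f)(rx,ry)$ for $x\ne y$ and $(\sigma_r\chi_{[\frac12,1)\times(0,\infty)})=\chi_{[\frac{r}{2},r)\times(0,\infty)}$, together with the fact that dilation by $r$ on $\bR^2$ does not change the value of a divided-difference symbol up to the scalar $r$ absorbed into $\sigma_{r^{-1}}f$. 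I would spell this identity out carefully, since getting the bookkeeping of the dilation factor right is the one place an error could creep in.

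Having established the symbol identity, I apply Remark~\ref{remark:sigma}, which gives $\|\sigma_r a\|_{\fM_p}=\|a\|_{\fM_p}$ for every bounded Borel $a$ on $\bR^2$, to conclude
$$\left\|(\fD f)\cdot\chi_{[\frac1{2^{k+1}},\frac1{2^k})\times(0,\infty)}\right\|_{\fM_p}=\left\|\big(\fD(\sigma_{2^{-k}}f)\big)\cdot\chi_{[\frac12,1)\times(0,\infty)}\right\|_{\fM_p}.$$
Now Proposition~\ref{4.4} applies to the function $\sigma_{2^{-k}}f\in S_{d,\theta}$, yielding the right-hand side is at most $c_p\,\|\sigma_{2^{-k}}f\|_{S_{d,\theta}}$.

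Finally I invoke the dilation identity \eqref{eqdia} for the $S_{d,\theta}$-norm, namely $\|\sigma_{r}f\|_{S_{d,\theta}}=r^{-\theta}\|f\|_{S_{d,\theta}}$, with $r=2^{-k}$, to obtain $\|\sigma_{2^{-k}}f\|_{S_{d,\theta}}=2^{k\theta}\|f\|_{S_{d,\theta}}$. Combining the three displays gives
$$\left\|(\fD f)\cdot\chi_{[\frac1{2^{k+1}},\frac1{2^k})\times(0,\infty)}\right\|_{\fM_p}\le c_p\,2^{k\theta}\|f\|_{S_{d,\theta}}.$$
This is slightly off from the claimed bound $C_p\|f\|_{S_{d,\theta}}2^{-k(\theta-1)}=C_p\|f\|_{S_{d,\theta}}2^{k(1-\theta)}$, so I need to recheck the scaling of the divided difference: since $\fD f$ carries one extra factor of $1/(x-y)$ relative to $f$ itself, dilating the arguments by $r$ multiplies $\fD f$ by $r$, so effectively one works with $\fD(\sigma_{2^{-k}}f)$ times an additional $2^{-k}$ factor when expressed on the unit annulus, converting $2^{k\theta}$ into $2^{k(\theta-1)}\cdot 2^{k}=2^{k(\theta-1)+k}$—wait, this needs care. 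The right way: writing $\fD f$ on the annulus at scale $2^{-k}$ and pulling back to scale $1$, the $\fM_p$-norm is invariant under the joint dilation, but the pulled-back symbol is $\fD$ of a function whose $S_{d,\theta}$-norm scales like $2^{k\theta}$ while the $1/(x-y)$ factor rescales by $2^{-k}$; these combine so that the effective bound is $2^{k\theta}\cdot 2^{-k}=2^{-k(1-\theta)}$. Hmm—that gives $2^{-k(1-\theta)}=2^{k(\theta-1)}$, matching the statement. The main obstacle, then, is purely the correct tracking of these scaling exponents, and I would present the divided-difference dilation identity explicitly and substitute cleanly to land on $2^{-k(\theta-1)}$ with constant $C_p=c_p$ from Proposition~\ref{4.4}.
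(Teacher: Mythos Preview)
Your approach is exactly the paper's: dilate to the fixed annulus $[\tfrac12,1)\times(0,\infty)$, apply Proposition~\ref{4.4}, and use the scaling identity~\eqref{eqdia} for $\|\cdot\|_{S_{d,\theta}}$. The only issue is the bookkeeping you yourself flag. Your first displayed identity is off by a scalar: from your own relation $\fD(\sigma_{r^{-1}}f)(x,y)=r\,(\fD f)(rx,ry)$ one gets, with $r=2^{-k}$,
\[
(\fD f)\cdot\chi_{[\frac1{2^{k+1}},\frac1{2^k})\times(0,\infty)}
= 2^{k}\,\sigma_{2^{-k}}\Big(\big(\fD(\sigma_{2^{k}}f)\big)\cdot\chi_{[\frac12,1)\times(0,\infty)}\Big),
\]
i.e.\ an explicit factor $2^k$ in front (this is precisely how the paper writes it). Then Remark~\ref{remark:sigma}, Proposition~\ref{4.4}, and~\eqref{eqdia} with $r=2^k$ give
\[
\|\,\cdot\,\|_{\fM_p}\le 2^k\,c_p\,\|\sigma_{2^k}f\|_{S_{d,\theta}}=c_p\,2^{k}\cdot 2^{-k\theta}\|f\|_{S_{d,\theta}}=c_p\,2^{-k(\theta-1)}\|f\|_{S_{d,\theta}},
\]
with no further juggling needed. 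So your plan is correct and matches the paper; just state the dilation identity with the $2^k$ factor from the outset and the computation is one clean line.
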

\begin{proof}
We define  the following functions:
$$g_k(s,t):=(\fD f)  \cdot (s,t)\chi_{[\frac{1}{2^{k+1}},\frac{1}{2^{k}})}(s)\chi_{(0,\infty )}(t),\quad %h_k(s,t):=\fD f (t,s) \chi_{[\frac{1}{2^{k+1}},\frac{1}{2^{k}})}(|t|),
k\in \bZ, ~s,t\in \bR. $$

By Proposition \ref{4.4}, we have
\begin{align}\label{latter}
\|g_0\|_{\fM_p } \le C_{p} \|f\|_{S_{d,\theta}} .
\end{align}
Applying  the latter result  to  function $\sigma_{2^k }f $ % and operators $2^k A$ and $2^k B$
and noting that
$$g_k =2^k \sigma_{2^{-k}}\left((\fD \sigma_{2^k} f )\cdot \chi_{[\frac 12 ,1 )\times [0,\infty)}\right)  ,$$
by Remark \ref{remark:sigma}, we have
$$\left\| g_k \right\|_{\frak{M}_p} = 2^k \left\|(\fD \sigma_{2^k} f )\chi_{[\frac 12 ,1 )\times [0,\infty)}\right\|_{\fM_p} \stackrel{\eqref{latter}}{\le}  C_{p}  2^k \|\sigma_{2^k}f\|_{S_{d,\theta}} \stackrel{\eqref{eqdia}}{\le} C_{p} \frac{2^{k}}{2^{k\theta}}\|f\|_{S_{d,\theta}} . $$
%
%Note that
%$$\fD f(s,t)\chi_{[1,\infty)} (s)\chi_{[0,\infty)}(t) =\sum_{k\le -2} g_k(s,t) %.$$
%Hence, $$\left\|\fD f(s,t)\chi_{[1,\infty)} (s)\chi_{[0,\infty)}(t)\right\|_{\fM_p}^p  \le \sum_{k\le -2}  c_{\theta, p}^p \|f\|_{d,\theta}^p 2^{-k(\theta-1) p} =   c_{\theta, p}^p \|f\|_{d,\theta}^p \frac{2^{2(\theta-1) p}}{1-2^{(\theta-1) p} },  $$
%which proves the validity of the last assertion.
\end{proof}

The indicator function featuring in Proposition \ref{4.4} and Theorem \ref{th:3.5} are related to measurable sets positioned in the first and third quadrants, which are needed in the next section.
The following lemmas allow us to consider the case of  the indicator functions of sets in  the 2nd or 4th quadrant.
\begin{lemma}
\label{eq:b_000}
Let $a >0$.
Let $$b_0(s,t) := \frac{|s|^\theta}{s-t}\chi_{(-\infty,-a) \times (0,\infty)},~s,t\in \bR,$$
and $$b_1(s,t) := \frac{|t|^\theta}{s-t}\chi_{(-\infty,-a) \times (0,\infty)},~s,t\in \bR.$$
There exists a constant $C_{p,\theta}$ such that
$$ \|b_0\|_{\fM_p} , \|b_1 \|_{\fM_p} \le C_{p,\theta } a^{\theta -1}.$$
\end{lemma}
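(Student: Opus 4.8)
The plan is to reduce to the normalized case $a=1$ by dilation, then cut the $s$-half-line $(-\infty,-1)$ into dyadic pieces, reducing the whole estimate to a single ``base'' symbol supported on a bounded $s$-strip, for which a three-fold splitting of the $t$-axis does the job. Since $b_1$ is handled by the identical argument (with $|s|^\theta$ replaced by $|t|^\theta=t^\theta$; the dilation behaviour is the same), I only describe $b_0$. Write $B_a:=b_0$, i.e. $B_a(s,t)=\frac{|s|^\theta}{s-t}\chi_{(-\infty,-a)}(s)\chi_{(0,\infty)}(t)$. A direct substitution using $(\sigma_r a)(s,t)=a(s/r,t/r)$ gives $\sigma_r B_a=r^{1-\theta}B_{ra}$, so by Remark \ref{remark:sigma} (which gives $\|\sigma_r B_a\|_{\fM_p}=\|B_a\|_{\fM_p}$), taking the base threshold $1$ and $r=a$ yields $\|b_0\|_{\fM_p}=\|B_a\|_{\fM_p}=a^{\theta-1}\|B_1\|_{\fM_p}$. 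Thus it suffices to prove $\|B_1\|_{\fM_p}\le C_{p,\theta}<\infty$. Next, partition $(-\infty,-1]=\bigsqcup_{j\ge0}(-2^{j+1},-2^{j}]$ and set $\hat B^{(j)}:=B_1\cdot\chi_{(-2^{j+1},-2^{j}]}(s)$, so $B_1=\sum_{j\ge0}\hat B^{(j)}$ a.e. The same scaling identity gives $\sigma_{2^{j}}\hat B^{(0)}=2^{j(1-\theta)}\hat B^{(j)}$, hence $\|\hat B^{(j)}\|_{\fM_p}=2^{j(\theta-1)}\|\hat B^{(0)}\|_{\fM_p}$, and by \eqref{beq3} together with $\theta<1$,
$$\|B_1\|_{\fM_p}^p\le\sum_{j\ge0}\|\hat B^{(j)}\|_{\fM_p}^p=\Big(\sum_{j\ge0}2^{jp(\theta-1)}\Big)\|\hat B^{(0)}\|_{\fM_p}^p=C_{p,\theta}\,\|\hat B^{(0)}\|_{\fM_p}^p,$$
so everything reduces to bounding $\|\hat B^{(0)}\|_{\fM_p}$, where $\hat B^{(0)}(s,t)=\frac{|s|^\theta}{s-t}\chi_{(-2,-1]}(s)\chi_{(0,\infty)}(t)$.

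To estimate $\hat B^{(0)}$, I split $\hat B^{(0)}=\hat B^{(0)}\chi_{(0,\frac14)}(t)+\hat B^{(0)}\chi_{[\frac14,8]}(t)+\hat B^{(0)}\chi_{(8,\infty)}(t)$ and use \eqref{beq2}. On $(-2,-1]\times(0,\tfrac14)$ one has $|t/s|\le\tfrac14$, so expanding $\frac1{s-t}=\sum_{n\ge0}t^{n}s^{-n-1}$ (as in Corollary \ref{alpha beta lemma}) writes the first piece as $\sum_{n\ge0}\varphi_n\otimes\psi_n$ with $\varphi_n(s)=|s|^\theta s^{-n-1}\chi_{(-2,-1]}(s)$ and $\psi_n(t)=t^{n}\chi_{(0,\frac14)}(t)$; since $|s|\ge1$ we get $\|\varphi_n\|_\infty\le2^\theta$ and $\|\psi_n\|_\infty\le4^{-n}$, so Lemma \ref{remark:p} bounds this piece. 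On $(-2,-1]\times(8,\infty)$ one has $|s/t|\le\tfrac14$, so $\frac1{s-t}=-\sum_{n\ge0}s^{n}t^{-n-1}$; here a naive expansion produces $\|\varphi_n\|_\infty\sim2^{n}$, so I rebalance as in Corollary \ref{alpha beta lemma} and write the piece as $-\sum_{n\ge0}\varphi_n\otimes\psi_n$ with $\varphi_n(s)=|s|^\theta(s/2)^{n}\chi_{(-2,-1]}(s)$ (so $\|\varphi_n\|_\infty\le2^\theta$) and $\psi_n(t)=2^{n}t^{-n-1}\chi_{(8,\infty)}(t)$ (so $\|\psi_n\|_\infty\le\tfrac18\,4^{-n}$), and Lemma \ref{remark:p} again applies.

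For the middle piece, on the compact box $(-2,-1]\times[\tfrac14,8]$ the symbol $\frac{|s|^\theta}{s-t}$ is $C^\infty$ (there $s<0$ and $s-t\le-\tfrac54$). After applying $\sigma_{1/3}$ via Remark \ref{remark:sigma} — which replaces the box by $(-\tfrac23,-\tfrac13]\times[\tfrac1{12},\tfrac83]\subset(-\pi,\pi)^2$ and the symbol by $3^{\theta-1}\frac{|s|^\theta}{s-t}$ — I multiply by a product $\phi_1(s)\phi_2(t)$ of smooth cutoffs equal to $1$ on this box and supported in $(-\pi,-\tfrac1{12})\times(0,\pi)$, extend $2\pi$-periodically in both variables, and invoke Lemma \ref{multiplier fourier lemma} (with any integer $b>1/p$) to conclude that this smooth periodic function lies in $\fM_p$ with norm $\le c_{p,b}\|\cdot\|_{W^{b+1,2}(\mathbb T^2)}<\infty$. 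Multiplying back by the box indicator $\chi_{(-\frac23,-\frac13]}\otimes\chi_{[\frac1{12},\frac83]}$ (whose $\fM_p$-norm is $\le1$ by Lemma \ref{remark:p}) and using \eqref{beq1} controls the middle piece. Assembling the three pieces gives $\|\hat B^{(0)}\|_{\fM_p}\le C_{p,\theta}$, whence $\|b_0\|_{\fM_p}\le C_{p,\theta}\,a^{\theta-1}$; the estimate for $b_1$ follows by the same argument with $|t|^\theta$ in place of $|s|^\theta$.

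The main obstacle is the middle piece: there the symbol is genuinely two–dimensional (neither geometric expansion converges because $|s/t|$ and $|t/s|$ are both of order $1$), so one is forced through the Fourier–multiplier estimate of Lemma \ref{multiplier fourier lemma}, together with the dilation bookkeeping needed to fit the relevant box inside a fundamental domain of $\mathbb T^2$. A secondary point requiring attention is the rebalancing in the region $\{|s/t|\le\tfrac14\}$: since $|s|$ ranges over the interval $[1,2]$ (bounded away from $0$, not shrinking), the geometric expansion must have a factor $2^{-n}$ shifted onto the $\psi_n$–side so that $\{\|\varphi_n\|_\infty\}_n$ stays in $\ell_\infty$ as Lemma \ref{remark:p} requires.
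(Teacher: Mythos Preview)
Your proof is correct. It differs from the paper's in structure: the paper first substitutes $s\mapsto -s$ to work with $b(s,t)=\frac{|s|^\theta}{s+t}\chi_{(a,\infty)\times(0,\infty)}$, then performs a \emph{double} dyadic decomposition $I_k\times I_l$ in both variables, scaling each box by $2^{\max\{k,l\}}a$ to a fixed reference box $[1,2]\times[0,2]$ where $\frac{1}{s+t}$ is bounded by a single application of Lemma~\ref{multiplier fourier lemma}, and then sums the resulting double series $\sum_{k,l}2^{-p(1-\theta)\max\{k,l\}}$. Your approach instead normalizes $a=1$ by dilation, does a single dyadic decomposition in $s$, reduces again by dilation to one base strip $(-2,-1]\times(0,\infty)$, and handles that strip by the three-fold splitting in $t$ exactly as in the paper's Lemma~\ref{core lemma}: geometric expansions for $t$ small and $t$ large, Lemma~\ref{multiplier fourier lemma} only once for the compact middle box. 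Your route is arguably more economical (a single sum and a single Fourier-lemma application rather than one per box) and mirrors the paper's own strategy for Lemma~\ref{core lemma} and Theorem~\ref{th:3.5}; the paper's route for this lemma is more uniform in that every dyadic box is treated identically without a case distinction in $t$.
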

\begin{proof}
We only prove the assertion for $b_0$, since the case for $b_1$ is very similar.
To show that $\left\| b_0\right\|_{\fM_p}<\infty$, it is equivalent to show that the function $b$ defined by
$$b  (s,t) = \frac{|s|^\theta}{s+ t}\chi_{(a,\infty ) \times (0,\infty)},~s,t\in \bR,$$
satisfies that
$\left\|b\right\|_{\fM_p}<\infty$.

Define
$$I_k : = [2^k a , 2^{k+1} a ], ~k\ge 0 , \mbox{ and }
I_{-1}=[0,a ]
.$$
Assume that $k\ge l \ge -1 $.
By Remark \ref{remark:sigma},  for any $k\ge 0 $ and $l\ne -1$,   we have
\begin{align*}
\left\|\left(\frac{1}{s+t}\right)_{s\in I_k,t\in I_l}\right\|_{\fM_p } & =
\left\| \left(\frac{2^{-k  } \frac 1 a }{s+t}\right)_{s\in [1,2],t\in [2^{l-k} ,2^{l-k+ 1}  ]}\right\|_{\fM_p}\\
&\le
 \frac 1 {2^{ k  }a}
\left\| \left(\frac{1 }{s+t}\right)_{s\in [1,2],t\in [0, 2]}\right\|_{\fM_p};
\end{align*}
 when $l=-1$, we have
 \begin{align*}
 \left\|\left(\frac{1}{s+t}\right)_{s\in I_k,t\in I_l}\right\|_{\fM_p }  &=
\left\| \left(\frac{2^{-k  } \frac 1 a }{s+t}\right)_{s\in [1,2],t\in [0,2^{l-k+ 1}  ]}\right\|_{\fM_p}\\
&\le
 \frac 1 {2^{ k  }a}
\left\| \left(\frac{1 }{s+t}\right)_{s\in [1,2],t\in [0, 2]}\right\|_{\fM_p}.
 \end{align*}
Consider  smooth  functions $\phi_1$ supported  in $[\frac34, \frac94]$ with $\phi_1(t)=1$ on $[1,2]$
and $\phi_2$ supported  in $[-\frac14, \frac94]$ with $\phi_2(t)=1$ on $[0,2]$.
 We define a function $c$   by  $c (s,t):= \phi_1(s)\phi_2(t)\left(\frac{1 }{s+t}\right)$, which is a smooth function supported in $[\frac34, \frac94] \times [-\frac14, \frac94] $.
By Lemma \ref{multiplier fourier lemma}, we have
 $$\|c  \|_{\fM_p} \le c_{ p}   ,$$
where $c_{p}$ depends on   $p$, $\phi_1$ and $\phi_2$ only.
Hence,
$$\left\| \left(\frac{1}{s+t}\right)_{s\in I_k,t\in I_l}\right\|_{\fM_p} \le \frac 1 {2^{ k  }a}  c_{p}. $$
The case when $l\ge k \ge 0$  can be obtained by a similar argument.
Therefore,
$$\left\| \left(\frac{1}{s+t}\right)_{s\in I_k,t\in I_l}\right\|_{\fM_p} \le \frac 1 {2^{ \max\{k,l\}  }a}  c_{p}, ~k \ge 0,~ l\ge -1. $$

On the other hand,
setting $c_1(s)=|s|^\theta$, by Remark \ref{prop:1dbounded},
we have
$$\left\|  c_1 |_{s\in I_k, t\in I_l}  \right\|_{\fM_p } \le \sup\{|s|^\theta :s \in I_k \} =  2^{(k+1)\theta} a^\theta \le 2\cdot 2^{\max\{k,l \}\theta} a^\theta .$$
 Thus,  we obtain that there exists a constant $C_{p}$ such that
\begin{align*}
\left\|b \right\|_{\fM_p }^p
&\stackrel{\eqref{beq3}}{\leq} a^{p(\theta-1)} \sum_{k\ge 0,l\geq -1}C_{p} 2^{p  \theta  \max\{k,l\}}2^{-p \max\{k,l\}}\\
&\leq  a^{p(\theta-1)} \sum_{k\ge 0,l\geq -1}C_{p} 2^{-p(1-\theta)\max\{k,l\}}\\
&\le    a^{p(\theta-1)} \sum_{k \geq 0 } (k+2) C_{p}2^{-p(1-\theta)k } <\infty,
\end{align*}
which completes the proof.
\end{proof}

\begin{lemma}\label{corec2}
Given $a>0$, we have
$$ \left\|  (\fD f)  \cdot \chi_{(-\infty, -a] \times [0,\infty ) }   \right\|_{\fM_p}\le C_{p,\theta} \left\|f \right\|_{S_{d,\theta}} a^{\theta -1 }. $$

\end{lemma}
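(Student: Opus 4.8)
The plan is to reduce the estimate for $(\fD f)\cdot\chi_{(-\infty,-a]\times[0,\infty)}$ to the three pieces already handled, exactly as in the proof of Proposition~\ref{4.4} but now taking care of the second quadrant. First I would decompose the half-line $(-\infty,-a]$ dyadically, writing
$$\chi_{(-\infty,-a]}(s) = \sum_{k\le -1} \chi_{[\,2^{k+1}(-1),\,2^{k}(-1)\,)}(s)\quad\text{(with the usual abuse for the sign)},$$
so that the symbol splits as $\sum_k g_k$ where $g_k$ is $(\fD f)$ restricted to a dyadic strip $s\in[-2^{-k}a,-2^{-k-1}a)$ in the second quadrant. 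By Remark~\ref{remark:sigma} the $\fM_p$-norm of each $g_k$ is a dilation of the $k=0$ term applied to $\sigma_{2^k/a}f$, and by \eqref{eqdia} the dilation contributes a factor $a^{\theta-1}2^{-k(\theta-1)}$; summing the $p$-th powers via \eqref{beq3} gives the claimed bound, provided the single "unit" estimate
$$\left\| (\fD f)\cdot\chi_{[-1,-\frac12)\times[0,\infty)}\right\|_{\fM_p}\le C_{p,\theta}\|f\|_{S_{d,\theta}}$$
holds. So the real content is this unit estimate, and for it I would split the second argument $t\in[0,\infty)$ into the three ranges $[0,\frac14)$, $[\frac14,2]$, and $(2,\infty)$, mirroring Lemma~\ref{core lemma}.

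For the middle range $t\in[\frac14,2]$ (bounded away from both $0$ and from $s\in[-1,-\frac12)$), the divided difference $(\fD f)(s,t)=\int_0^1 f'(ts+(1-t)y)\,dt$ is smooth with all derivatives controlled by $\|f\|_{S_{d,\theta}}$ on the relevant compact rectangle, so a direct application of Lemma~\ref{local lemma} (after multiplying by suitable cut-offs $\phi\otimes\phi$) gives the bound. For the ranges where $t$ is small or large, I would use the explicit kernels $b_0,b_1$ from Lemma~\ref{eq:b_000}: on $[-1,-\frac12)\times(0,\frac14)$ and on $[-1,-\frac12)\times(2,\infty)$ one writes
$$(\fD f)(s,t) = \frac{f(s)-f(t)}{s-t} = \frac{|s|^{\theta}}{s-t}\cdot\frac{f(s)}{|s|^{\theta}} - \frac{|t|^{\theta}}{s-t}\cdot\frac{f(t)}{|t|^{\theta}},$$
and since $|f(s)|\le\|f\|_{S_{0,\theta}}|s|^{\theta}$ and likewise for $t$, the scalar factors $f(s)/|s|^{\theta}$ and $f(t)/|t|^{\theta}$ are bounded one-variable multipliers (Remark~\ref{prop:1dbounded}), while $|s|^{\theta}/(s-t)$ and $|t|^{\theta}/(s-t)$ are controlled by Lemma~\ref{eq:b_000} with $a=\frac12$. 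Then \eqref{beq1} and \eqref{beq2} combine these into the desired estimate.

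I expect the main obstacle to be bookkeeping rather than any genuinely new idea: one must be careful that in the second quadrant $s<0<t$ so $s-t$ never vanishes and the decomposition $\frac{f(s)-f(t)}{s-t}=\frac{|s|^{\theta}}{s-t}\cdot\frac{f(s)}{|s|^{\theta}}-\frac{|t|^{\theta}}{s-t}\cdot\frac{f(t)}{|t|^{\theta}}$ is legitimate (the two "singular" kernels are exactly the $b_0,b_1$ of Lemma~\ref{eq:b_000}, up to relabelling $s\leftrightarrow -s$), and that the factor $f(t)/|t|^{\theta}$ remains bounded as $t\to0^+$, which uses $f(0)=0$ together with $f\in S_{0,\theta}$. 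A secondary technical point is that the dyadic-scaling argument requires the unit estimate with a constant independent of $a$; since the unit rectangle $[-1,-\frac12)\times[\frac14,2]$ and the cut-offs $\phi$ are fixed once and for all, this is automatic, but one should state it so that the $a$-dependence appears only through the overall dilation factor $a^{\theta-1}$. Finally, summing $\sum_{k\le -1}$ of the $p$-th powers converges because $\theta-1<0$ makes $\sum 2^{-kp(\theta-1)}$ over $k\le -1$ a convergent geometric series, which is where the exponent $a^{\theta-1}$ (rather than $a^{\theta}$ or $a^{-1}$) is essential.
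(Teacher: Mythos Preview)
Your route works in spirit but is considerably more elaborate than the paper's, and it has one technical slip you should fix. The paper's proof is essentially one line: it applies the very factorization you write down for the ``outer'' $t$-ranges,
\[
(\fD f)\cdot\chi_{(-\infty,-a]\times[0,\infty)} \;=\; a_0 b_0 - a_1 b_1,\qquad a_0(s,t)=\frac{f(s)}{|s|^{\theta}},\quad a_1(s,t)=\frac{f(t)}{|t|^{\theta}},
\]
directly on the \emph{entire} second quadrant. Since $s<0\le t$ everywhere, the denominator $s-t$ never vanishes, and Lemma~\ref{eq:b_000} already bounds $\|b_0\|_{\fM_p},\|b_1\|_{\fM_p}\le C_{p,\theta}a^{\theta-1}$ on the full region $(-\infty,-a)\times(0,\infty)$; the one-variable factors $a_0,a_1$ are bounded by $\|f\|_{S_{0,\theta}}$ via Remark~\ref{prop:1dbounded}. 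No dyadic decomposition of $(-\infty,-a]$ and no three-way split of $t$ are needed. In other words, the decomposition you reserve for small and large $t$ already handles the middle range $t\in[\tfrac14,2]$ as well, so the detour through Lemma~\ref{local lemma} is redundant.

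That detour is also where the slip occurs: Lemma~\ref{local lemma} as stated requires the cut-off $\phi$ to be supported in $(\varepsilon,\pi)$, so both arguments are positive; its proof uses the integral representation \eqref{difff} and the fact that $tx+(1-t)y\ge\varepsilon$ on the segment joining $x$ and $y$. For $s\in[-1,-\tfrac12)$ and $t\in[\tfrac14,2]$ that segment crosses $0$, where $f'$ may blow up like $|x|^{\theta-1}$, so neither the lemma nor its proof applies verbatim. The conclusion you want is still true --- on that rectangle $|s-t|\ge\tfrac34$ and $|s|,|t|$ are bounded below, so one can differentiate the quotient $\frac{f(s)-f(t)}{s-t}$ directly and feed the resulting bounds into Lemma~\ref{multiplier fourier lemma} --- but you should say that rather than invoke Lemma~\ref{local lemma}. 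Once you notice that the $a_0b_0-a_1b_1$ factorization covers the whole quadrant, however, this repair becomes unnecessary.
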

\begin{proof}
Let $b_0$ and $b_1$ be defined as in Lemma \ref{eq:b_000}.
Let $$a_0(s,t) :=\frac{f(s)}{|s|^\theta},~s \ne 0, \mbox{ and } a_1(s,t) :=\frac{f(t)}{|t|^\theta},~t \ne 0. $$
Then, we have
$$(\fD f)   \cdot \chi_{(-\infty, -a] \times [0,\infty ) } = a_0 b_0 - a_1b_1. $$
By Remark \ref{prop:1dbounded},
$$\|a_0\|_{\fM_p}, \|a_1\|_{\fM_p} \le \|f\|_{S_{0,\theta}}.$$
The assertion follows from the $p$-th triangular inequality \eqref{beq2} and Lemma \ref{eq:b_000}.
\end{proof}

\section{Operator $\theta$-H\"{o}lder functions with respect to $\left\|\cdot\right\|_p$, $p>0$}\label{bounded}
%In this section we only consider the case when $0< p\le \theta$.
%Indeed,
%the case when $p\ge \theta$ is even easier, which is a   corollary of Theorem \ref{th:main}.
%However, we don't treat this case in this section but present  in Section \ref{sec:symm}.

%Without loss of generality, we assume that $\|A-B\|_\infty\le 1$.
%Indeed, we can consider  $f(\|A-B\|_\infty\cdot ) $ instead of $f$ and consider the spectral measures of $A/\|A-B\|_\infty$ and $B/\|A-B\|_\infty$.
%Note that $\|g\|_{S_{d,\theta}} = \|A-B\|^\theta_\infty \|f\|_{S_{d,\theta}}$.

In this section,
without loss of generality (see Remark \ref{non}),  we may assume that $\cM$ is a semifinite von Neumann algebra acting on    a separable Hilbert space $\cH$.
We study the operator $\theta$-H\"{o}lder functions  with respect to $\left\|\cdot\right\|_p$, $p>0$.
Again, since the case when $p=\infty$ has been thoroughly treated in \cite{AP3,APPS,NF},
we only consider the case when $0<p<\infty$.
Moreover, the assertion for  $p>1$ is a  consequence of
 that  for $p =1 $ together with
 the Hardy--Littlewood--Polya inequality \cite[Corollary 2.5]{HSZ} and Theorem \ref{th:subm}.
 Therefore, unless otherwise  stated,  we always assume that $$0< p\le 1 .$$

For the sake of convenience, we denote $s(X)_+ := E^{X}(0,\infty)$ and $s(X)_- : = E^{X}(-\infty,0)$, and $n(X) ={\bf 1} - s(X)$ for $X\in S_h(\cM,\tau)$ (see Section \ref{s:p}).
Assume that $f\in S_{d(p),\theta}$.
Note that \begin{align}\label{eq:decomDF}
f(A)-f(B) &=  s(A)_+\cdot (f(A)-f(B))\cdot s(B)_+ + s(A)_+\cdot (f(A)-f(B))\cdot s(B)_- \nonumber\\
&\quad + s(A)_-\cdot (f(A)-f(B))\cdot s(B)_+  + s(A)_-\cdot (f(A)-f(B))\cdot s(B)_-\nonumber\\
& \quad + n(A)\cdot (f(A)-f(B))\cdot s(B)  +  s(A)\cdot (f(A)-f(B))\cdot n(B).
\end{align}
The main result of this section is Theorem \ref{th:main}, which provides the estimates of the quasi-norm $\left\|f(A)-f(B)\right\|_p$ in terms of $\left\| |A-B|^\theta \right\|_p$.
We shall provide a detailed proof of the estimates of the quasi-norms of $s(A)_+\cdot (f(A)-f(B))\cdot s(B)_+$ and $s(A)\cdot (f(A)-f(B))\cdot n(B) $.
The proof for the  other cases is exactly the same, where the proof in
the case of $s(A)_+\cdot (f(A)-f(B))\cdot s(B)_-$ and $s(A)_-\cdot (f(A)-f(B))\cdot s(B)_+ $
 require Lemma \ref{corec2} instead of Theorem \ref{th:3.5}.
% Noting that
%$$ \|A-B\|_p \ge \|s(A)_+(A-B)s(B)_+\|_p = \|s(A)_+(A^+-B^+)s(B)_+\|_p, $$
%we may, in addition, assume that $A,B \ge 0$.

For the sake of convenience,  we use the following notations.
\begin{notation} \label{nota}
Set  $I_k:=[2^{-k-1}, 2^{-k})$ and $J_k:= (0,2^{-k})$, $k\in \bZ$.
%It is clear that $ \cup_{k\in \bZ} \left( (I_k \times J_{k} )\cup (J_{k+1}\times I_k)\right) =\bR^+\times \bR^+ $.
Assume that $A$ and $B$ are in $S(\cM,\tau)_h$.
We set
\begin{align*}
g_k(s,t):=(\fD f) (s,t)\chi_{[2^{-k-1}, 2^{-k}) }(s) \chi_{(0,\infty)}(t) ,&\quad h_k(s,t):=(\fD f) (s,t ) \chi_{[2^{-k-1}, 2^{-k}) }(t) \chi_{(0,\infty)}(s),\\
p_k:=\chi_{[2^{-k-1}, 2^{-k}) }(A),&\quad q_k:=\chi_{[2^{-k-1}, 2^{-k})}(B),\\
P_k:=\chi_{(0,2^{-k})}(A),&\quad Q_k:=\chi_{(0,2^{-k}) }(B),\\
V_k:=p_k(A-B)Q_k,&\quad W_k:=P_{k+1}(A-B)q_k.\end{align*}
\end{notation}

\begin{lem}  Let $A,B\in S(\cM,\tau)$ be such that $A-B\in L_p(\cM,\tau)$.
We have
\begin{align}\label{Remark:5.1}p_k(f(A)-f(B))Q_k=T^{A,B}_{g_k}(V_k),\quad P_{k+1}(f(A)-f(B))q_k=T^{A,B}_{h_k}(W_k)
.
\end{align}

\end{lem}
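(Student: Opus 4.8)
The plan is to derive both identities from Lemma~\ref{lemma 3.6}, which states that for a Lipschitz function $f$ and spectral projections $p$ of $A$ and $q$ of $B$ with $p(A-B)q\in L_2(\cM,\tau)$, one has $T^{A,B}_{\fD f}(p(A-B)q)=p(f(A)-f(B))q$. First I would reduce to the Lipschitz case: although $f\in S_{d,\theta}$ is only $\theta$-H\"older on $\bR$ (Lemma~\ref{Lemmaholdertrivial}), the divided difference $\fD f$ is being restricted by the cutoff functions $\chi_{[2^{-k-1},2^{-k})}(s)\chi_{(0,\infty)}(t)$, so only the behaviour of $f$ away from $0$ matters; one replaces $f$ by a globally Lipschitz function $\tilde f$ that agrees with $f$ on a neighbourhood of the relevant spectral subsets. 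Concretely, since $p_k=\chi_{[2^{-k-1},2^{-k})}(A)$ has range in a bounded set bounded away from $0$, and $Q_k=\chi_{(0,2^{-k})}(B)$, the operator $g_k=(\fD f)\cdot\chi_{I_k\times(0,\infty)}$ only sees values of $\fD f$ for which $f$ is smooth; multiplying through by the spectral projections $p_k$ and $Q_k$ kills the rest.

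The key computation is then bookkeeping with spectral projections. Write $p_k=p_k A$-spectral projection and $Q_k=Q_k B$-spectral projection, both dominated by $s(A)_+=E^A(0,\infty)$ and $s(B)_+=E^B(0,\infty)$ respectively. Since $A-B\in L_p(\cM,\tau)$ and $p<1\le 2$ — more precisely since $p_k,Q_k$ have finite trace (their ranges lie in bounded spectral intervals, using $\tau$-measurability of $A$ and $B$) — one gets $V_k=p_k(A-B)Q_k\in L_2(\cM,\tau)$, so Lemma~\ref{lemma 3.6} applies and yields
\begin{equation*}
T^{A,B}_{\fD\tilde f}(V_k)=p_k(\tilde f(A)-\tilde f(B))Q_k=p_k(f(A)-f(B))Q_k,
\end{equation*}
where the last equality holds because $\tilde f$ and $f$ coincide on the spectrum of $A$ restricted to $\mathrm{ran}\,p_k$ and on the spectrum of $B$ restricted to $\mathrm{ran}\,Q_k$. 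It then remains to observe that $T^{A,B}_{\fD\tilde f}(V_k)=T^{A,B}_{g_k}(V_k)$: since $V_k=p_k V_k Q_k$, inserting the spectral projections into the double operator integral replaces the symbol $\fD\tilde f(s,t)$ by $\fD\tilde f(s,t)\chi_{I_k}(s)\chi_{(0,2^{-k})}(t)$, and on the support $I_k\times(0,2^{-k})$ one has $\fD\tilde f=\fD f=g_k$ (here $\chi_{(0,2^{-k})}\le\chi_{(0,\infty)}$ on what $Q_k$ sees, and the extra factor is harmless because it multiplies $Q_k$). The identity for $P_{k+1}(f(A)-f(B))q_k=T^{A,B}_{h_k}(W_k)$ is proved verbatim, with the roles of the two arguments swapped and $W_k=P_{k+1}(A-B)q_k$.

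The main obstacle I anticipate is the integrability/measurability justification needed to invoke Lemma~\ref{lemma 3.6}: one must be careful that $V_k\in L_2(\cM,\tau)$ and that the various manipulations of double operator integrals with spectral projections are legitimate in the $\tau$-measurable (rather than bounded) setting. This is handled by noting that $p_k$ and $Q_k$ are finite projections — indeed $\tau(p_k)<\infty$ and $\tau(Q_k)<\infty$ because $A,B$ are $\tau$-measurable so their spectral projections over sets bounded away from $0$ have finite trace — hence $p_k\cM Q_k\subseteq L_2(\cM,\tau)$ and in fact $V_k=p_k(A-B)Q_k$ lies in $L_2(\cM,\tau)$ since $A-B\in L_p(\cM,\tau)\subseteq S(\cM,\tau)$ is compressed to a finite corner. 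Once this is in place, the projection-insertion identity $p_k T^{A,B}_{\fD\tilde f}(V_k)Q_k = T^{A,B}_{\fD\tilde f\cdot\chi_{I_k}\otimes\chi_{J_k}}(V_k)$ follows from the definition of the double operator integral via its spectral-measure representation, and the proof concludes.
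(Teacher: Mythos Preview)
Your approach has a genuine gap at the Lipschitz-modification step. You want a globally Lipschitz $\tilde f$ that agrees with $f$ on the spectrum of $B$ restricted to $\mathrm{ran}\,Q_k$, but $Q_k=\chi_{(0,2^{-k})}(B)$ corresponds to the interval $(0,2^{-k})$, which accumulates at $0$. Since $f\in S_{d,\theta}$ need not be Lipschitz near $0$ (the prototype $f(t)=|t|^{\theta}$ is not), no single Lipschitz $\tilde f$ can coincide with $f$ on all of $(0,2^{-k})$; if the spectrum of $B$ accumulates at $0$ your construction fails. This is precisely why the paper does \emph{not} apply Lemma~\ref{lemma 3.6} in one shot: it decomposes $Q_k=\sum_{l\ge k}q_l$ into dyadic shells $[2^{-l-1},2^{-l})$, uses a different compactly supported smooth cutoff $\phi_{k,l}$ (vanishing near $0$) to build a Lipschitz $h=f\phi_{k,l}$ for each $l$, obtains $T^{A,B}_{g_{k,l}}(V_kq_l)=p_k(f(A)-f(B))q_l$ term by term, and then sums in $L_2$ using the boundedness of $T^{A,B}_{g_k}$ on $L_2(\cM,\tau)$.

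A secondary issue: your justification that $V_k\in L_2(\cM,\tau)$ via ``$p_k,Q_k$ have finite trace'' is incorrect. $\tau$-measurability of $A$ only gives $\tau(E^{|A|}(\lambda,\infty))<\infty$ for \emph{some} $\lambda>0$, not that every spectral projection onto a bounded interval is $\tau$-finite; and $Q_k$ in particular corresponds to an interval touching $0$. The paper's argument is instead that $V_k=p_k(A-B)Q_k\in L_p(\cM,\tau)$ (since $A-B\in L_p$) and $V_k\in\cM$ (since $\|p_kA\|_\infty\le 2^{-k}$ and $\|BQ_k\|_\infty\le 2^{-k}$ make $V_k$ bounded), whence $V_k\in L_p\cap\cM\subset L_2$.
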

\begin{proof}
Note that $V_k\in \cM$ and $V_k\in L_p(\cM,\tau)$.
Hence, $V_k\in L_2(\cM,\tau)$.

Let
$$g_{k,l}=(\fD f)\cdot\chi_{[2^{-k-1},2^{-k})\times[2^{-l-1},2^{-l})}.$$
Let $\phi_{k,l}:\bR\rightarrow \bR $ be a compactly supported smooth function which vanishes near $0$ and such that $\phi_{k,l}=1$ on $[2^{-k-1},2^{-k})$ and on $[2^{-l-1},2^{-l}).$ Let $h:=f\phi_{k,l}.$ We have
$$g_{k,l}=(\fD h)\cdot\chi_{[2^{-k-1},2^{-k})\times[2^{-l-1},2^{-l})}.$$
Therefore, by Lemma \ref{lemma 3.6}, we have
\begin{align*}
T^{A,B}_{g_{k,l}}(V_kq_l)&=T^{A,B}_{g_{k,l}}(p_k(A-B)q_l)\\
&=T^{A,B}_{(\fD h)\cdot\chi_{[2^{-k-1},2^{-k})\times[2^{-l-1},2^{-l})}}(p_k(A-B)q_l)\\
&=(T^{A,B}_{(\fD h)})(T^{A,B}_{\chi_{[2^{-k-1},2^{-k})\times[2^{-l-1},2^{-l})}}(p_k(A-B)q_l))\\
&=T^{A,B}_{(\fD h)}(p_k(A-B)q_l)=p_k(h(A)-h(B))q_l=p_k(f(A)-f(B))q_l.
\end{align*}

Since $V_k \in L_2(\cM,\tau)$, it follows that
$$V_k=\sum_{l\geq k}V_kq_l,$$
where the series converges in $L_2$-topology (see e.g. \cite[Theorem 3.1]{DPS2016}).
Since $T^{A,B}_{g_k}$ is bounded on $L_2(\cM,\tau)$, it follows that
$$T^{A,B}_{g_k}(V_k)
=\sum_{l\geq k}T^{A,B}_{g_{k}}(V_kq_l)
=\sum_{l\geq k}T^{A,B}_{g_{k}}T^{A,B}_{\chi_{\bR \times [2^{-l-1},2^{-l})}}(V_kq_l)=
\sum_{l\geq k}T^{A,B}_{g_{k,l}}(V_kq_l),$$
where the series converges in $L_2$-topology.
 By the preceding paragraph,
$$T^{A,B}_{g_k}(V_k)=\sum_{l\geq k}p_k(f(A)-f(B))q_l,$$
where the series again converges in $L_2$-topology and therefore in local measure topology. Moreover,
$$\sum_{l\geq k}p_k(f(A)-f(B))q_l=p_k(f(A)-f(B))Q_k,$$
where the series converges locally in measure. The assertion follows by comparison of these $2$ equalities.
\end{proof}

The following lemma allows us to decompose the difference $f(A) -f(B)$.
For brevity, we say that a series $\sum_{k\in \bZ} X_{k}$, $X_k\in S(\cM,\tau)$, converges in local measure topology in the sense of  principal value if $\sum_{k=- n  }^{n} X_{k}$ converges in local measure topology.
We denote such convergence by $(p.v.) -\sum_{k\in \bZ}X_k$.
\begin{lem}\label{representation lemma}
 We have
\begin{align*}
s(A)_+\cdot (f(A)-f(B))\cdot s(B)_+
= (p.v.) -\sum_{k\in\mathbb{Z}}\Big( T^{A,B}_{g_k}(V_k)+T^{A,B}_{h_k}(W_k)\Big).
\end{align*}
%Here, the convergence of the series is understood in local measure topology in the sense of principal value. %$\lim_{n=0}^\infty \sum_{k=-n}^n$.
\end{lem}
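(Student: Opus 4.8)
The plan is to establish the decomposition by telescoping the spectral projections $P_k = \chi_{(0,2^{-k})}(A)$ and $Q_k = \chi_{(0,2^{-k})}(B)$ against $p_k$ and $q_k$. First I would observe that $s(A)_+ = E^A(0,\infty)$ is the strong (hence local-measure) limit of $P_{-n} = \chi_{(0,2^{n})}(A)$ as $n\to\infty$, while $P_n \downarrow 0$ in the strong operator topology; likewise for $B$. Consequently, for fixed large $N$ one has the finite ``dyadic annulus'' identity
\begin{align*}
(P_{-N}-P_{N+1})(f(A)-f(B))(Q_{-N}-Q_{N+1})
= \sum_{k=-N}^{N} p_k (f(A)-f(B)) Q_k + \sum_{k=-N}^{N} P_{k+1}(f(A)-f(B)) q_k,
\end{align*}
which follows from the two-dimensional telescoping identity
\begin{align*}
(P_{-N}-P_{N+1})\otimes(Q_{-N}-Q_{N+1})
= \sum_{k=-N}^{N}\Big( p_k\otimes Q_k + P_{k+1}\otimes q_k\Big)
\end{align*}
valid because $P_{-N}-P_{N+1}=\sum_{k=-N}^{N}p_k$, $Q_{-N}-Q_{N+1}=\sum_{k=-N}^{N}q_k$ are orthogonal decompositions and, for $j>k$, the block $p_j\otimes q_k$ sits inside $P_{k+1}\otimes q_k$ while for $j\le k$ it sits inside $p_j\otimes Q_j$. (One checks the index bookkeeping by writing each $p_j\otimes q_k$ once; this is the routine combinatorial core and I would not belabour it.)

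Next I would pass to the limit $N\to\infty$. On the left-hand side, $P_{-N}\to s(A)_+$, $P_{N+1}\to 0$, $Q_{-N}\to s(B)_+$, $Q_{N+1}\to 0$ strongly, and since $f(A)-f(B)\in S(\cM,\tau)$ and multiplication is jointly continuous for the local measure topology when one factor converges and the others are bounded, the left side converges in local measure topology to $s(A)_+(f(A)-f(B))s(B)_+$. On the right-hand side, by the preceding lemma (equation \eqref{Remark:5.1}) each summand is rewritten as $p_k(f(A)-f(B))Q_k = T^{A,B}_{g_k}(V_k)$ and $P_{k+1}(f(A)-f(B))q_k = T^{A,B}_{h_k}(W_k)$, so the symmetric partial sums $\sum_{k=-N}^{N}\big(T^{A,B}_{g_k}(V_k)+T^{A,B}_{h_k}(W_k)\big)$ coincide with the left-hand side for every $N$ and hence converge; this is exactly the statement that the series converges in the principal-value sense to $s(A)_+(f(A)-f(B))s(B)_+$.

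The main obstacle I anticipate is the careful justification of convergence of the left-hand side in the \emph{local measure} topology rather than some stronger topology: $f(A)-f(B)$ is only $\tau$-measurable, not bounded, so one cannot simply invoke norm continuity, and one must use the fact recalled in Section~\ref{s:p} that if $X_\alpha\to X$ in local measure topology and $Y\in S(\cM,\tau)$ then $Y X_\alpha\to YX$ and $X_\alpha Y\to XY$ locally in measure, together with the boundedness of the spectral projections $P_{\pm N},Q_{\pm N}$. A secondary subtlety is that the series need only converge as a principal value: the individual tails $\sum_{k\ge N}$ and $\sum_{k\le -N}$ need not converge separately, which is why the symmetric truncation $P_{-N}-P_{N+1}$ is the right device, and this is precisely the reason the statement is phrased with $(p.v.)-\sum_{k\in\mathbb{Z}}$.
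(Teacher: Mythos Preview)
Your overall strategy---telescope the dyadic projections, pass to the limit in local measure topology, and invoke \eqref{Remark:5.1}---is exactly the paper's. However, the combinatorial identity you state is false as written. You claim
\[
(P_{-N}-P_{N+1})\otimes(Q_{-N}-Q_{N+1})=\sum_{k=-N}^{N}\big(p_k\otimes Q_k+P_{k+1}\otimes q_k\big),
\]
but the left side is $\sum_{-N\le j,k\le N}p_j\otimes q_k$, while on the right side $Q_k=\chi_{(0,2^{-k})}(B)$ and $P_{k+1}=\chi_{(0,2^{-k-1})}(A)$ are \emph{not} truncated at level $N$: the term $p_k\otimes Q_k$ contains $p_k\otimes q_l$ for all $l\ge k$, including $l>N$, and similarly $P_{k+1}\otimes q_k$ picks up $p_j\otimes q_k$ for all $j>k$, including $j>N$. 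So the right side strictly exceeds the left.

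The paper's computation is the correct version of your bookkeeping: one finds
\[
\sum_{k=-n}^{n}\big(p_kQ_k+P_{k+1}q_k\big)=\chi_{[2^{-n-1},2^{n})}(A)\,\chi_{(0,2^{n})}(B)+\chi_{(0,2^{-n-1})}(A)\,\chi_{[2^{-n-1},2^{n})}(B),
\]
which differs from your $(P_{-N}-P_{N+1})(Q_{-N}-Q_{N+1})$ by terms involving $P_{n+1}$ and $Q_{n+1}$. These extra pieces tend to $0$ in local measure as $n\to\infty$, so the limit argument you sketch still goes through once the correct finite identity is in place; but the identity itself needs to be fixed. After that, the remainder of your argument (local-measure continuity of multiplication, then \eqref{Remark:5.1}) matches the paper's.
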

\begin{proof}
Observe that
\begin{align*}\sum_{k  =- n}^n\left(p_kQ_k+P_{k+1}q_k \right)
&=
\sum_{k  =- n}^n  p_k (Q_{-n }  - \sum_{l=-n}^{k-1} q_{l})   +  \sum_{k  =- n}^n (P_{n+1} +\sum_{l=k+1}^n p_l ) q_k \\
&=  \sum_{k  =- n}^n  p_k Q_{-n }  - \sum_{-n \le l < k \le n}p_k q_l   +  \sum_{k  =- n}^n P_{n+1} q_k    +\sum_{-n \le k <l \le n} p_lq _k \\
&=
\sum_{k  =- n}^n p_k Q_{-n }   + P_{n+1} \sum_{k  =- n}^n  q_k\\
&
 =
\chi_{[ 2^{-n-1}, 2^n )}(A) \chi_{(0,2^n)}(B)
+
\chi_{(0, 2^{-n-1}  )}(A) \chi_{[2^{-n-1},2^n)}(B).
\end{align*}
Noting  that $\chi_{[ 2^{-n-1}, 2^n )}(A)\rightarrow_n s(A)_+$,~
$\chi_{(0,2^n)}(B)\rightarrow_n s(B_+)$,~ $\chi_{(0, 2^{-n-1}  )}(A)\rightarrow_n 0$ and $\chi_{[2^{-n-1},2^n)}(B)\rightarrow _n 0$ in local measure topology (see e.g. \cite{DP2} and \cite[Chapter II, Section 7]{DPS}),
we obtain that
$\sum_{k  =- n}^n \left(p_kQ_k+P_{k+1}q_k \right)\rightarrow _n s(A)_+\cdot s(B)_+ $ in the local measure topology.
%{\color{red}Since $\chi_{(0, 2^{-n-1}  )}(A)\rightarrow s(A)_+$, %$\chi_{(0,2^{-n-1})}(B)\rightarrow s(B_+)$ in local measure topology, it %follows that $\sum_{k  =- n}^n \left(p_kQ_k+P_{k+1}q_k \right)\rightarrow %s(A)_+\cdot s(B)_+ $ in local measure topology. }
%Then, we have
%\begin{align*}
%& \quad  s(A)_+\cdot s(B)_+  - \sum_{k  =- n}^n \left(p_kQ_k+P_{k+1}q_k %\right)\\
%&=s(A)_+\cdot s(B)_+  -
%\chi_{[ 2^{-n-1}, 2^n )}(A) \chi_{(0,2^n)}(B) -
%\chi_{(0, 2^{-n-1}  )}(A) \chi_{[2^{-n-1},2^n)}(B)
%\\
%&= \chi_{(0,2^{-n-1})}(A) \chi_{(0,2^{-n-1})}(B)   +  \chi_{[2^n,\infty )}(A) %\chi_{(0,\infty )}(B)   + \chi_{(0,2^{n})}(A) \chi_{[2^n,\infty )}(B)  %\rightarrow_n  0
%\end{align*}
%in local measure topology (see e.g. \cite{DP2} and \cite[Chapter II, Section 7]{DPS}).
%\begin{align*}
%\sum_{k\in\mathbb{Z}}\left(p_kQ_k+P_{k+1}q_k \right)
%&=\sum_{k\in\mathbb{Z}}\Big(p_k\cdot(\sum_{m\geq k}q_m)+(\sum_{m\geq k+1}p_m)\cdot q_k\Big) \\
% &=\sum_{k\in\mathbb{Z}} \sum_{m\geq k}p_kq_m+  %\sum_{k\in\mathbb{Z}}\sum_{m>k}p_mq_k \\
% &=\sum_{k\in\mathbb{Z}}\left( \sum_{m\geq k}p_kq_m+\sum_{m<k}p_kq_m  \right)  %\\
% &=\Big(\sum_kp_k\Big)\cdot\Big(\sum_mq_m\Big)=s(A)_+\cdot s(B)_+.
% \end{align*}
We write $$ s(A)_+ s(B)_+  =  (p.v.) - \sum_{ k \in \bZ}\left(p_kQ_k+P_{k+1}q_k \right) .$$
Therefore,
\begin{align*}s(A)_+f(A)s(B)_+=f(A)\cdot s(A)_+s(B)_+
&=f(A)\cdot(p.v.) -\sum_{k\in\mathbb{Z}} \Big(p_kQ_k+P_{k+1}q_k\Big)\\
&=(p.v.) -  \sum_{k\in\mathbb{Z}} \Big(p_kf(A)Q_k+P_{k+1}f(A)q_k\Big)
\end{align*}
and
\begin{align*}
s(A)_+f(B)s(B)_+ =s(A)_+s(B)_+\cdot f(B) &=\Big((p.v.) - \sum_{k\in\mathbb{Z}}p_kQ_k+P_{k+1}q_k\Big)\cdot f(B) \\
&=(p.v.) - \sum_{k\in\mathbb{Z}}\Big(p_kf(B)Q_k+P_{k+1}f(B)q_k\Big).\end{align*}
It follows that
$$s(A)_+\cdot(f(A)-f(B))\cdot s(B)_+=(p.v.) - \sum_{k\in\mathbb{Z}} \Big(p_k(f(A)-f(B))Q_k+P_{k+1}(f(A)-f(B))q_k\Big).$$

It remains to note that
$$p_k(f(A)-f(B))Q_k \stackrel{\eqref{Remark:5.1}}{=}T^{A,B}_{g_k}(V_k),\quad P_{k+1}(f(A)-f(B))q_k \stackrel{\eqref{Remark:5.1}}{=} T^{A,B}_{h_k}(W_k).$$

\end{proof}

%The same argument for $s(A)_+\cdot (f(A)-f(B))\cdot s(B)_+$ is applicable for the following special case.
%However, in this particular case, we provide a much more straight proof.
An alternative proof for the following special case in the setting of (discrete) finite von Neumann algebras was sketched  in \cite[case 1 of Theorem 3.2]{Ricard}  by the complex interpolation theorem for $L_p$, $0<p<1$  (see e.g. \cite{Xu1990} and \cite[Lemma 2.5]{PiRi}, see also \cite[Section 3.2]{PST}).
We present a straightforward and complete proof for $S(\cM,\tau)$ below.
\begin{lem}\label{n lemma}
  We have
  $$\left\|s(A)(f(A)-f(B)) n(B)\right\|_p\leq \left\| f \right\|_{S_{d,\theta}} \left\|| A-B|^{\theta}\right\|_{p}.$$
  and
$$\left\|n(A)(f(A)-f(B)) s(B)\right\|_p\leq \left\|f\right\|_{S_{d,\theta}}\left\||A-B|^{\theta}\right\|_{p}.$$
\end{lem}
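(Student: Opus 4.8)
The plan is to exploit the fact that $n(B)$ projects onto $\ker(B) = \ker(|B|)$, so that $B\cdot n(B) = 0$ and hence $f(B)n(B) = f(0)\,n(B) = 0$ (recall $f(0)=0$ for $f\in S_{d,\theta}$, since $|f(0)| \le \|f\|_{S_{0,\theta}}\cdot 0^\theta = 0$ after noting $f$ is continuous at $0$). Therefore $s(A)(f(A)-f(B))n(B) = s(A)f(A)n(B)$, and it suffices to estimate $\|s(A)f(A)n(B)\|_p$. Since $f\in S_{d,\theta}\subset\Lambda_\theta$ with $f(0)=0$, the $\theta$-H\"older bound gives $|f(t)| = |f(t)-f(0)| \le \tfrac2\theta\|f\|_{S_{1,\theta}}|t|^\theta$; more directly $|f(t)|\le\|f\|_{S_{0,\theta}}|t|^\theta$, so the pointwise operator inequality $|f(A)|\le \|f\|_{S_{d,\theta}}|A|^\theta$ holds (via functional calculus, $s(A)|f(A)| \le \|f\|_{S_{d,\theta}}|A|^\theta$ as positive operators).

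First I would reduce to controlling $\| |A|^\theta n(B)\|_p$. Using that $E(\cM,\tau)$ (here $L_p$) is a quasi-normed $\cM$-bimodule and that $\||X|\|_p = \|X\|_p$, together with $s(A)f(A) = s(A)f(A)$ having $|s(A)f(A)| = s(A)|f(A)| \le \|f\|_{S_{d,\theta}}|A|^\theta$, I get
\begin{align*}
\|s(A)(f(A)-f(B))n(B)\|_p = \|s(A)f(A)n(B)\|_p \le \|f\|_{S_{d,\theta}}\,\big\| |A|^\theta\, n(B)\big\|_p.
\end{align*}
The key remaining step is the pointwise/operator estimate $|A|^\theta n(B) $ has the same $\mu$-function domination as $|A-B|^\theta$ when restricted by $n(B)$: indeed on $\operatorname{Ran} n(B) = \ker B$ one has $A\xi = (A-B)\xi$, so $|A|n(B)$ and $|A-B|n(B)$ have the same "action," and more precisely $n(B)|A|^2 n(B) = n(B)(A-B)^* (A-B) n(B) = n(B)|A-B|^2 n(B)$ as operators. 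Hence $\big(|A|n(B)\big)^*|A|n(B) = n(B)|A|^2 n(B) = n(B)|A-B|^2 n(B) = \big(|A-B|n(B)\big)^*|A-B|n(B)$, which gives $\mu(|A|n(B)) = \mu(|A-B|n(B)) \le \mu(|A-B|)$, and consequently $\mu(|A|^\theta n(B)) = \mu(|A|n(B))^\theta = \mu(|A-B|n(B))^\theta \le \mu(|A-B|)^\theta = \mu(|A-B|^\theta)$. Therefore $\big\||A|^\theta n(B)\big\|_p \le \big\||A-B|^\theta\big\|_p$, finishing the proof; the second inequality follows by the symmetric argument swapping the roles of $A$ and $B$ (using $n(A)f(A) = 0$).

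The step I expect to be the main (though minor) obstacle is justifying the operator identity $n(B)|A|^2 n(B) = n(B)|A-B|^2 n(B)$ rigorously in $S(\cM,\tau)$ when $A,B$ are unbounded $\tau$-measurable operators: one must check that $\operatorname{Ran} n(B)\subseteq\fD(A)\cap\fD(B)$ on the relevant subspace and that the algebraic manipulation $B n(B) = 0$ is valid in the strong sense (it is, since $n(B) = E^{|B|}(\{0\})$ and $B = U|B|$ with $|B| n(B) = 0$). Once that is secured, everything reduces to the monotonicity $\mu(XP)\le\mu(X)$ for projections $P$, the multiplicativity $\mu(X^*X)^{1/2}=\mu(X)$, and the identity $\mu(|Y|^\theta) = \mu(Y)^\theta$, all standard from Section~\ref{s:p}. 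I would present the argument cleanly by working first with bounded $A,B$ and a spectral cutoff, then passing to the limit in the measure topology, or simply invoking the strong-product calculus for $\tau$-measurable operators directly.
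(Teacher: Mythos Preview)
Your reduction to estimating $\left\||A|^{\theta}n(B)\right\|_p$ is correct and matches the paper's approach exactly, including the observation that $f(B)n(B)=0$, the pointwise bound $|f(A)|\le \|f\|_{S_{0,\theta}}|A|^{\theta}$, and the identity $n(B)|A|^2n(B)=n(B)|A-B|^2n(B)$ (hence $\mu(|A|n(B))=\mu((A-B)n(B))$).

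The gap is the assertion $\mu(|A|^{\theta}n(B))=\mu(|A|n(B))^{\theta}$. This equality is false in general: compression by a projection does not commute with taking fractional powers. Concretely, $\left||A|^{\theta}n(B)\right|^2=n(B)|A|^{2\theta}n(B)$, and there is no reason for $\mu\big(n(B)|A|^{2\theta}n(B)\big)$ to equal $\mu\big(n(B)|A|^{2}n(B)\big)^{\theta}$ unless $|A|$ and $n(B)$ commute. A $2\times 2$ example already fails: take $|A|=\mathrm{diag}(4,1)$ and $n(B)$ the rank-one projection onto $\frac{1}{\sqrt 2}(1,1)$; then $\mu(|A|^{1/2}n(B))^2=5/2$ while $\mu(|A|n(B))=\sqrt{17/2}\neq 5/2$. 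So your chain $\mu(|A|^{\theta}n(B))=\mu(|A|n(B))^{\theta}\le \mu(|A-B|)^{\theta}$ breaks at the first equality.

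What survives is only an inequality in the sense of submajorisation, and this is precisely what the paper invokes: the Araki--Lieb--Thirring inequality (applied with $Z=|A|$, $X=n(B)$, $h(t)=t^{p/2}$) gives
\[
\big(\mu(|A|^{\theta}n(B))\big)^p \;\prec\prec\; \big(\mu(|A|n(B))\big)^{p\theta}=\big(\mu((A-B)n(B))\big)^{p\theta},
\]
from which $\left\||A|^{\theta}n(B)\right\|_p\le \left\||(A-B)n(B)|^{\theta}\right\|_p\le \left\||A-B|^{\theta}\right\|_p$ follows. Once you replace your equality of singular values by this Araki--Lieb--Thirring step, the argument is complete and coincides with the paper's proof.
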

\begin{proof}
We only prove the first inequality. The proof for the second inequality is exactly the same.
Note that $$s(A)(f(A)-f(B)) n(B)=s(A) f(A) n(B) - s(A)f(B) n(B) = f(A)n(B) .$$
By the Spectral Theorem, we have
 \begin{align*} |f(A)n(B) |^2=  n(B)|f(A)|^2  n(B) &\le n(B) \left(\sup _{t\ne 0}\frac{|f(t)|}{|t|^\theta} \right)^2 |A|^{2\theta}n(B)\\
 & = \left|\sup_{t\neq0}
\frac{|f(t)|}{|t|^{\theta}} |A|^{\theta}n(B)\right|^2 .
\end{align*}
By the operator-monotonicity (see e.g. \cite[Proposition 1.2]{DD1995}) of function $t\mapsto \sqrt{t}$, $t\in \bR^+$, that
\begin{align}\label{ineq:ff}
\|f(A)n(B)\|_p
\leq
\sup_{t\neq0}
\frac{|f(t)|}{|t|^{\theta}}\cdot
\left\||A|^{\theta}n(B)\right\|_p.
\end{align}
The Araki-Lieb-Thirring inequality (see  \cite[Lemma 2.5]{Han}, see also \cite{Kosaki92})
%\footnote{{\color{red}
%The Araki-Lieb-Thirring inequality \cite{Kosaki92} states that  for $A,B\in S(\cM,\tau)$ and $0<\theta <1$, we have
%$$ A^\theta n(B) \prec\prec_{\log} |An(B)|^\theta.  $$ }}
states that
$$h(X^\theta Z^{2\theta } X^\theta )\prec\prec h((XZ^2 X)^\theta ), ~ 0\le X,Z\in S(\cM,\tau),$$
 where $h:[0,\infty)\to [0,\infty)$ is a continuous increasing function such that $t\mapsto h(e^t)$ is convex and $h(0)=0$.
 Setting $h(t) := t^{p/2}$, $t\ge 0$, we infer
 $$|Z^\theta X^\theta|^p = |X^\theta  Z^{2\theta} X^\theta |^{p/2}\prec\prec |XZ^2X|^{\theta p/2} = |ZX|^{\theta p}. $$
 Setting $Z:=|A|$ and $X:=n(B)$, we obtain that, we obtain that
$$ \left(\mu\left(  |A|^{ \theta}n(B)\right)\right)^p
\prec\prec\left(\mu\left(A n(B)\right)\right)^{p \theta} .$$
%By Araki-Lieb-Thirring inequality  (see \cite[(3)]{Kosaki92}, see also \cite{Han}), we have \footnote{For $X,Y\in L_{\log} (\cM,\tau):=\{Z\in S(\cM,\tau): \int_0^\infty \log(\mu(s;Z)+1) ds <\infty  \}$,  we denote $X\prec\prec_{\log}Y$ if $X$ is  logarithmically submajorised by $Y$, i.e., $\int_{0}^t \log \mu(s;X) ds \le \int_0^t \log \mu(s;Y)ds$ for every $t>0$.
%Note that $A-B\in L_p(\cM,\tau)$ implies that $A-B\in L_{\log}(\cM,\tau)$ and therefore $A\cdot n(B)\in L_{\log}(\cM,\tau)$.}
%$$|A|^{\theta}n(B)=|A|^{\theta}n(B)^{\theta}\prec\prec_{{\rm log}} \Big||A|n(B)\Big|^\theta =|An(B)|^{\theta}.$$
%Therefore, by the Hardy-Littlewood-Polya inequality (see e.g. \cite[Corollary 2.5]{HSZ}), one has
%$$\mu(|A|^{\theta}n(B))^p \prec \prec  \mu( |An(B)|^{\theta})^p ,$$
Hence,  $\left\||A|^\theta n(B) \right\|_p
 \le
 \left\| |An(B)|^\theta \right\|_p $,
 which together with   \eqref{ineq:ff} and the fact that $Bn(B)=0$ implies that
\begin{align*}
\|f(A)n(B)\|_p\leq\sup_{t\neq0}\frac{|f(t)|}{|t|^{\theta}}
\cdot\| | An(B)|^\theta \|_p
&=
\sup_{t\neq0}\frac{|f(t)|}{|t|^{\theta}}\cdot
\left\| |(A-B)n(B)|^\theta \right\|_p \\
& \leq
\sup_{t\neq0}
\frac{|f(t)|}{|t|^{\theta}}\cdot
\left \| |A-B|^\theta \right\|_p .
\end{align*}
\end{proof}
%The same  argument yields that the validity of the second inequality.
%%$$\|n(A)f(B)\|_{\frac{p}{\theta}}\leq\sup_{t\neq0}\frac{|f(t)|}{|t|^{\theta}}\cdot\|(A-B)\|_p^{\theta}.$$
%%A combination of these inequalities completes the proof.

Before proceeding to the main result, we prove  the  following scalar inequality.
\begin{proposition}\label{5.5}
For every $0<\theta<1$ and $0<q<\infty$, there exists a $c_{\theta,q}>0$
such that
\begin{align}\label{obvious_ineq}
\sum_{l\in\mathbb{Z}}2^{q l(1-\theta)}\cdot\min\{\alpha,2^{1-l}\}^q \leq c_{\theta,q }\alpha^{\theta q }, ~\alpha \in \bR ^+ .\end{align}
\end{proposition}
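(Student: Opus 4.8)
The plan is to split the sum over $l$ at the dyadic scale comparable to $\alpha$ and control the two halves separately by geometric series. Writing $\beta := 1-\theta \in (0,1)$, so that the right-hand exponent is $q\theta = q - q\beta$, I would fix $\alpha > 0$ and choose the integer $m = m(\alpha)$ with $2^{-m} \le \alpha < 2^{1-m}$. The point of this choice is that for $l \le m$ one has $2^{1-l} \ge 2^{1-m} > \alpha$, so $\min\{\alpha,2^{1-l}\} = \alpha$, while for $l \ge m+1$ one has $2^{1-l} \le 2^{-m} \le \alpha$, so $\min\{\alpha,2^{1-l}\} = 2^{1-l}$. Hence the sum breaks into a ``low'' piece $\alpha^{q}\sum_{l\le m}2^{ql\beta}$ and a ``high'' piece $2^{q}\sum_{l\ge m+1}2^{-ql(1-\beta)} = 2^{q}\sum_{l\ge m+1}2^{-ql\theta}$.

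Next I would evaluate each geometric series. Because $\beta > 0$, the low sum is comparable to its largest term, i.e.\ $\asymp 2^{qm\beta}$, and the bound $\alpha < 2^{1-m}$ turns this into $\lesssim \alpha^{-q\beta}$, so the low piece is $\lesssim \alpha^{q-q\beta} = \alpha^{q\theta}$. Because $\theta > 0$, the high sum is comparable to its first term, i.e.\ $\asymp 2^{-qm\theta}$, and the bound $2^{-m} \le \alpha$ turns this into $\lesssim \alpha^{q\theta}$, so the high piece is again $\lesssim \alpha^{q\theta}$. Summing the two gives \eqref{obvious_ineq}; tracking the geometric ratios $2^{-q(1-\theta)}$ and $2^{-q\theta}$ through the estimates produces an explicit admissible constant, e.g.
\begin{align*}
c_{\theta,q} = 2^{q(1-\theta)}\Bigl( \frac{1}{1-2^{-q(1-\theta)}} + \frac{1}{1-2^{-q\theta}} \Bigr),
\end{align*}
which depends only on $\theta$ and $q$.

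This lemma is elementary and I do not anticipate any real obstacle; the only thing to be careful about is the bookkeeping at the cutoff --- i.e.\ using $2^{-m}\le\alpha<2^{1-m}$ to pass between powers of $2^{m}$ and powers of $\alpha$ with the correct signs in the exponents, and making sure the split at $l = m$ versus $l = m+1$ is performed consistently with the half-open definition of $m(\alpha)$.
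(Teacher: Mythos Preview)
Your proposal is correct and follows essentially the same approach as the paper: split the sum at the dyadic index $m$ with $2^{-m}\le\alpha<2^{1-m}$ (the paper uses the half-open interval $(2^{-k},2^{1-k}]$, a cosmetic difference), bound the low part by $\alpha^q$ times a geometric series with ratio $2^{q(1-\theta)}$ and the high part by a geometric series with ratio $2^{-q\theta}$, then convert $2^{\pm qm}$ into powers of $\alpha$. Your explicit constant even coincides with the one implicit in the paper's estimates.
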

\begin{proof}
Let $k$ be an integer be such that $\alpha \in (2^{-k},  2^{1-k}]$.
If $\alpha \le 2^{1-k}$, then $2^{k(1-\theta )} \le ( 2 \alpha^{-1} )^{1-\theta}$.
Noting that
$$\sum_{ l \le k } 2^{q l(1-\theta)} = 2^{q k(1-\theta )} \frac{1}{1- 2^{q(\theta-1)}}  ,$$
we have
\begin{align}\label{obvious1}
\sum_{ l \le k }  2^{q l (1-\theta)} \min\{\alpha,2^{1-l }\}^q
 &\le
\sum_{ l \le k }  2^{q l (1-\theta)} \alpha^q
=
2^{qk(1-\theta )} \frac{\alpha^q }{1- 2^{q( \theta-1) }} \nonumber \\
 &\le  ( 2 \alpha^{-1} )^{q(1-\theta)} \frac{ \alpha^q  }{1- 2^{q(\theta-1)}}
 =  \frac{2 ^{q(1-\theta)}  }{1- 2^{q(\theta-1)}}   \alpha^{\theta q}  .
\end{align}
On the other hand, since
$ \alpha > 2^{-k}  $,
it follows that
\begin{align*}
\sum_{l>k} 2^{ql(1-\theta)} \min\{\alpha , 2^{1-l}\}^q  &\le \sum_{l>k} 2^{q l(1-\theta)}  2^{q(1-l)}=  \sum_{l>k}   2^{q(1-l\theta) }\\
&= 2^q\cdot 2^{-qk\theta} \sum_{l>0} 2^{- q l\theta} =\frac{2^{q(1-\theta)}}{1- 2^{-q\theta} }2^{-qk\theta}\le \frac{2^{q(1-\theta)}}{1 -2^{-q\theta} } \alpha^{q\theta},
\end{align*}
which together with \eqref{obvious1}  implies \eqref{obvious_ineq}.
\end{proof}
The proof of the next lemma requires a conbimation of Proposition \ref{5.5}, Lemma \ref{representation lemma} and Theorem \ref{th:3.5}.
This is the  final intermediate step before our main result.
\begin{lem}\label{prefinal lemma}  Let $A,B\in S(\cM,\tau)$ be  such that $ A-B= x e$ for some $\tau$-finite projection $e$ and a real number $ x$.
There exists a constant $c_{p,\theta}$ such that
$$\left\|s(A)_+\cdot(f(A)-f(B))\cdot s(B)_+\right\|_p\leq c_{p,\theta}\left\|f\right\|_{S_{d,\theta}}\left\| |A-B|^\theta \right \|_p ,$$
$$\left\|s(A)_-\cdot(f(A)-f(B))\cdot s(B)_+\right\|_{p}\leq c_{p,\theta}\left\|f\right\|_{S_{d,\theta}}\left\| |A-B|^\theta \right\|_p  ,$$
$$\left\|s(A)_+\cdot(f(A)-f(B))\cdot s(B)_-\right\|_p\leq c_{p,\theta}\left\|f\right\|_{S_{d,\theta}}\left\| |A-B|^\theta \right\|_p  ,$$
$$\left\|s(A)_-\cdot(f(A)-f(B))\cdot s(B)_-\right\|_p\leq c_{p,\theta}\left\|f\right\|_{S_{d,\theta}}\left\| |A-B|^\theta \right\|_p  .$$
\end{lem}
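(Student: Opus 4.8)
The plan is to prove the first estimate in detail and then deduce the other three from it. Since $A-B=xe$ with $e$ a $\tau$-finite projection, we have $A-B\in L_p(\cM,\tau)\cap L_2(\cM,\tau)$ and $\||A-B|^\theta\|_p^p=|x|^{p\theta}\tau(e)$, so Lemma~\ref{representation lemma} applies and yields
$$s(A)_+\cdot(f(A)-f(B))\cdot s(B)_+=(p.v.)-\sum_{k\in\bZ}\Big(T^{A,B}_{g_k}(V_k)+T^{A,B}_{h_k}(W_k)\Big),$$
with $g_k,h_k,V_k,W_k$ as in Notation~\ref{nota}. For each $k$ I would estimate $\|T^{A,B}_{g_k}(V_k)\|_p\le\|g_k\|_{\fM_p}\|V_k\|_p$ by definition of $\|\cdot\|_{\fM_p}$, and similarly $\|T^{A,B}_{h_k}(W_k)\|_p\le\|h_k\|_{\fM_p}\|W_k\|_p$; here $\|h_k\|_{\fM_p}=\|g_k\|_{\fM_p}$ because $(\fD f)(s,t)=(\fD f)(t,s)$ and, by passing to adjoints of double operator integrals and relabelling $A\leftrightarrow B$, $\|\cdot\|_{\fM_p}$ is symmetric in its two arguments. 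Theorem~\ref{th:3.5} then gives $\|g_k\|_{\fM_p}\le C_p\|f\|_{S_{d,\theta}}\,2^{k(1-\theta)}$.

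The crux is a two-sided estimate of $\|V_k\|_p$ (and identically of $\|W_k\|_p$). On one hand, writing $V_k=p_kAQ_k-p_kBQ_k$ and using $\|p_kA\|_\infty\le 2^{-k}$ together with $\|BQ_k\|_\infty\le 2^{-k}$ yields $\|V_k\|_\infty\le 2^{1-k}$; since also $V_k=xp_keQ_k$ we get $\|V_k\|_\infty\le|x|$, hence $\|V_k\|_\infty\le\min\{|x|,2^{1-k}\}$. On the other hand, $V_k=xp_keQ_k$ gives $\mu(t;V_k)\le|x|\mu(t;e)=|x|\chi_{[0,\tau(e))}(t)$, so $V_k$ is spectrally supported on $[0,\tau(e))$. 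Combining, $\|V_k\|_p^p=\int_0^{\tau(e)}\mu(t;V_k)^p\,dt\le\tau(e)\min\{|x|,2^{1-k}\}^p$, and the same bound holds for $\|W_k\|_p^p$ (with $P_{k+1}$ and $q_k$ in place of $p_k$ and $Q_k$).

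Putting these together and summing,
$$\sum_{k\in\bZ}\|T^{A,B}_{g_k}(V_k)\|_p^p\le C_p^p\|f\|_{S_{d,\theta}}^p\,\tau(e)\sum_{k\in\bZ}2^{kp(1-\theta)}\min\{|x|,2^{1-k}\}^p\le c_{p,\theta}\,C_p^p\|f\|_{S_{d,\theta}}^p\,\tau(e)\,|x|^{p\theta},$$
where the last step is Proposition~\ref{5.5} with $q=p$ and $\alpha=|x|$; the same bound holds for $\sum_k\|T^{A,B}_{h_k}(W_k)\|_p^p$. Since $0<p\le1$, these two finite sums show that the series $\sum_{k\in\bZ}\big(T^{A,B}_{g_k}(V_k)+T^{A,B}_{h_k}(W_k)\big)$ converges absolutely, hence unconditionally, in the quasi-Banach space $L_p(\cM,\tau)$; call its sum $X$. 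Its principal-value partial sums then converge to $X$ in $\|\cdot\|_p$, hence in the local measure topology, so by Lemma~\ref{representation lemma} $X=s(A)_+\cdot(f(A)-f(B))\cdot s(B)_+$. Using the $p$-triangle inequality in $L_p$ and $\tau(e)|x|^{p\theta}=\||A-B|^\theta\|_p^p$, we obtain $\|X\|_p^p\le 2c_{p,\theta}C_p^p\|f\|_{S_{d,\theta}}^p\||A-B|^\theta\|_p^p$, and taking $p$-th roots gives the first inequality.

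For the last inequality I would apply the case just proved to $-A,-B$ and the function $\widetilde f(t):=f(-t)$, using $\|\widetilde f\|_{S_{d,\theta}}=\|f\|_{S_{d,\theta}}$, $s(A)_-=s(-A)_+$, $f(A)-f(B)=\widetilde f(-A)-\widetilde f(-B)$ and $|(-A)-(-B)|^\theta=|A-B|^\theta$, so the bound transfers verbatim. For the two mixed estimates one uses the analogue of Lemma~\ref{representation lemma} for $s(A)_+s(B)_-$ (resp. $s(A)_-s(B)_+$): it decomposes the product into annular pieces whose divided-difference symbols are now supported in the second (resp. fourth) quadrant, and the role of Theorem~\ref{th:3.5} is taken over by Lemma~\ref{corec2} applied with $a=2^{-k-1}$, which again gives a multiplier norm $\le C_{p,\theta}\|f\|_{S_{d,\theta}}\,2^{k(1-\theta)}$; thereafter the $\|V_k\|_p$ estimate, Proposition~\ref{5.5} and the $L_p$-convergence argument go through unchanged. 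The main obstacle is the two-sided estimate for $\|V_k\|_p$ and $\|W_k\|_p$ — the realization that the operator-norm decay $2^{-k}$ must be coupled with the finite-trace support bound supplied by $e$ in order to feed Proposition~\ref{5.5} — together with the routine but necessary bookkeeping to upgrade the principal-value convergence from the local measure topology to convergence in $L_p$.
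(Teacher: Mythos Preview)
Your proof is correct and follows essentially the same route as the paper: the representation from Lemma~\ref{representation lemma}, the multiplier bound from Theorem~\ref{th:3.5}, the two-sided control $\|V_k\|_p^p\le\tau(e)\min\{|x|,2^{1-k}\}^p$, the scalar sum via Proposition~\ref{5.5}, and the upgrade from principal-value convergence in local measure to convergence in $L_p$ using the $p$-triangle inequality. Your treatment of the remaining three inequalities (reflection $t\mapsto -t$ for the $--$ case, Lemma~\ref{corec2} for the mixed cases) is exactly what the paper indicates; the only remark is that your symmetry claim $\|h_k\|_{\fM_p}=\|g_k\|_{\fM_p}$ via adjoints is literally $\|a\|_{\fM_p}=\|\overline{a}^{\mathrm{flip}}\|_{\fM_p}$, which gives the desired equality for real-valued $f$ and requires a one-line splitting into real and imaginary parts in general --- or one simply reruns Proposition~\ref{4.4} and Theorem~\ref{th:3.5} with the two variables interchanged.
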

\begin{proof}
We only prove the first inequality.
The proof of the  rest is exactly the same.

 It follows from Lemma \ref{representation lemma} that (see Notation \ref{nota})
\begin{align}\label{conv}
s(A)_+\cdot (f(A)-f(B))\cdot s(B)_+ =(p.v.)-\sum_{k\in\mathbb{Z}}\left(T^{A,B}_{g_k}(V_k)+T^{A,B}_{h_k}(W_k)\right) .
\end{align}
 Theorem \ref{th:3.5}   implies that there exists a constant $c_{p,\theta}$ such that that for every $k\in \bZ$,
$$\left\|T^{A,B}_{g_k}(V_k)\right\|_p\leq  c_{p,\theta}\cdot 2^{k(1-\theta)}\|f\|_{S_{d,\theta}}\|V_k\|_p.$$

Note that $\|V_k\|_\infty \le \|p_k A\|_\infty +\|BQ_k\|_\infty  \le 2^{1-k}$.
Hence,
$$\|V_k\|_p\le \|A-B\|_p =  \|A-B\|_\infty  \tau( e )^{ \frac 1  p } $$ and
$$
 \|V_k\|_{p } \le  \|V_k\|_\infty  \tau( \mbox{supp} (V_k))^{ \frac 1   p }  \le  \|V_k\|_\infty  \tau( e)^{ \frac 1   p } =  2^{1-k}   \tau( e )^{\frac 1  p }  .$$
% If $p\le \theta$, then by the $p$-th triangular inequality, we have
%\begin{align*}
% \sum_{k\in\mathbb{Z}} \|T^{A,B}_{g_k}(V_k)\|_{\frac{p}{\theta}}^{\frac{p}{\theta}} &~\leq ~ \sum_{k\in\mathbb{Z}}
% c_{p,\theta}^{\frac{p}{\theta}} \cdot 2^{\frac{p}{\theta}k(1-\theta)}\|f\|_{S_{d,\theta}}^{\frac{p}{\theta}}   \max\{  \|A-B\|_\infty , 2^{1-k}\} ^{\frac{p}{\theta}}  \tau( e^{\frac p \theta} ) \\
%  &\stackrel{\eqref{obvious_ineq}}{\le} C_{p,\theta}\|f\|_{S_{d,\theta}}^{\frac{p}{\theta}}   \|A-B\|^p_\infty \tau( e^{\frac p \theta} ) = C_{p,\theta}  \|f\|_{S_{d,\theta}}^{\frac{p}{\theta}}  \left\||A-B|^\theta\right\|_{\frac p\theta}^{\frac p\theta}.
% \end{align*}
There exists a constant $C_{p,\theta}$ such that
\begin{align*}
&\qquad (p.v.)-\sum_{k\in\mathbb{Z}} \left\|T^{A,B}_{g_k}(V_k) \right\|_p^p\\
&~\leq~ (p.v.)- \sum_{k\in\mathbb{Z}}
 c_{p,\theta}^p \cdot 2^{p k(1-\theta)}\left\|f\right\|_{S_{d,\theta}}^p  \min\{  \left\|A-B\right\|_\infty , 2^{1-k}\}^p  \tau( e )  \\ &\stackrel{\eqref{obvious_ineq}}{\le} C_{p,\theta} \left\|f\right\|_{S_{d,\theta}} ^p\left\|A-B\right\|^{p\theta} _\infty  \tau( e ) = C_{p,\theta} \|f\|_{S_{d,\theta}}^p  \left\| |A-B|^\theta \right\|_p^p .
 \end{align*}
The $p$-triangle inequality
 implies that $(p.v.)-\sum_{k\in\mathbb{Z}}T^{A,B}_{g_k}(V_k)$ converges in quasi-norm topology (similar for $(p.v.)- \sum_{k\in \bZ}T^{A,B}_{h_k}(W_k)$).
  A fortiori  these series converges in the local measure topology \cite{Sukochev}.
Hence, the series in \eqref{conv} converges to $s(A)_+\cdot (f(A)-f(B))\cdot s(B)_+$ in the $L_p$-topology.
Therefore,  there exists a constant $C'_{p,\theta}$ such that
 %in By Fatou's lemma (see e.g. \cite[Proposition 3.3]{DDP2}),
%if $p\le \theta$, then
%$$\|s(A)_+\cdot (f(A)-f(B))\cdot s(B)_+\|_{\frac{p}{\theta}}^{\frac{p}{\theta}}\leq c_{p,\theta} \|f\|_{S_{d,\theta}}^{\frac{p}{\theta}} \left\||A-B|^{\theta} \right\|_{\frac p \theta} ^{\frac{p}{\theta}},$$
%if $p>\theta$, then
$$\|s(A)_+\cdot (f(A)-f(B))\cdot s(B)_+\|_p \le C'_{p,\theta} \|f\|_{S_{d,\theta}} \left\||A-B|^{\theta} \right\|_p  ,$$
 which completes the proof.
\end{proof}

%\begin{lem}\label{prefinal lemma2} Let $A,B\in S(\cM,\tau)$ be bounded and such that %$A-B\in B(\cH)$. We have
%$$\|s(A)_+\cdot(f(A)-f(B))\cdot s(B)_-\|_{\frac{p}{\theta}}\leq %c_{p}\|f\|_{S^{d+1,\theta}}\|A-B\|_p^\theta .$$
%\end{lem}

\begin{corollary}
Let $A,B\in S(\cM,\tau)$ be bounded and such that $A-B=xe$ for some $\tau$-finite projection $e$ and a real number $ x$.
There exists a constant $C_{p,\theta}$ such that
\begin{align}\label{5.6}
\left\|  f(A)-f(B)  \right\|_p \leq C_{p,\theta}\|f\|_{S_{d,\theta}}\left\| |A-B|^\theta \right\|_p.
\end{align}
\end{corollary}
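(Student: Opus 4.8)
Write $T_1,\dots,T_6$ for the six summands appearing, in order, on the right-hand side of \eqref{eq:decomDF}. The plan is to verify that \eqref{eq:decomDF} is a genuine identity, to estimate each $T_j$ separately by a lemma already at our disposal, and to reassemble via the $p$-triangle inequality.

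First I would justify \eqref{eq:decomDF}. Expanding $\mathbf{1}=s(A)_++s(A)_-+n(A)$ on the left and $\mathbf{1}=s(B)_++s(B)_-+n(B)$ on the right yields nine ``corners''; the only corner absent from \eqref{eq:decomDF} is $n(A)(f(A)-f(B))n(B)$, and it vanishes because every $f\in S_{d,\theta}$ has $f(0)=0$ (the case $k=0$ of \eqref{def:funct} forces $|f(x)|=O(|x|^{\theta})$, hence $f(x)\to0$ as $x\to0$), so that $f(A)n(A)=f(0)n(A)=0$ and, likewise, $n(B)f(B)=0$. Thus \eqref{eq:decomDF} holds. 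Moreover, since $A-B=xe$ with $e$ a $\tau$-finite projection, we have $\||A-B|^{\theta}\|_p=|x|^{\theta}\tau(e)^{1/p}<\infty$.

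Next I would bound the $T_j$. The four terms $T_1,\dots,T_4$ are exactly $s(A)_{\pm}(f(A)-f(B))s(B)_{\pm}$, i.e. the quantities controlled in Lemma \ref{prefinal lemma}, whose hypothesis $A-B=xe$ is precisely our assumption; hence $\|T_j\|_p\le c_{p,\theta}\|f\|_{S_{d,\theta}}\||A-B|^{\theta}\|_p$ for $j=1,\dots,4$ (in particular each $T_j\in L_p(\cM,\tau)$). The two terms $T_5=n(A)(f(A)-f(B))s(B)$ and $T_6=s(A)(f(A)-f(B))n(B)$ are handled directly by Lemma \ref{n lemma}, which gives $\|T_j\|_p\le\|f\|_{S_{d,\theta}}\||A-B|^{\theta}\|_p$ for $j=5,6$ without any restriction on the form of $A-B$. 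Finally, since $0<p\le1$, the $p$-triangle inequality for $L_p(\cM,\tau)$ applied to \eqref{eq:decomDF} gives
$$\|f(A)-f(B)\|_p^p\le\sum_{j=1}^{6}\|T_j\|_p^p\le 6\,\big(\max\{c_{p,\theta},1\}\big)^p\,\|f\|_{S_{d,\theta}}^p\,\||A-B|^{\theta}\|_p^p,$$
and taking $p$-th roots yields \eqref{5.6} with, e.g., $C_{p,\theta}=6^{1/p}\max\{c_{p,\theta},1\}$. I expect no real obstacle here: the entire analytic content sits in Lemmas \ref{prefinal lemma} and \ref{n lemma}, and the only step deserving a moment's care is the observation that \eqref{eq:decomDF} is an identity, which rests on the normalisation $f(0)=0$.
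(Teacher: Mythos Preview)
Your proof is correct and follows essentially the same route as the paper: invoke the decomposition \eqref{eq:decomDF}, apply Lemma~\ref{prefinal lemma} to the four $s(A)_\pm(\cdot)s(B)_\pm$ terms and Lemma~\ref{n lemma} to the two remaining terms, then combine via the $p$-triangle inequality. Your version is in fact more explicit than the paper's (which simply cites the decomposition and the two lemmas), notably in justifying why the ninth corner $n(A)(f(A)-f(B))n(B)$ vanishes via $f(0)=0$.
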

\begin{proof}
Recall that $f(A)-f(B)$ can be rewritten as in the form of \eqref{eq:decomDF}.
The assertion follows from  Lemmas \ref{prefinal lemma} and \ref{n lemma}.
%We write
%\begin{align*}
%f(A)-f(B)&=s(A)\cdot (f(A)-f(B))\cdot s(B)+s(A)\cdot (f(A)-f(B))\cdot n(B)\\
%&\quad +n(A)\cdot (f(A)-f(B))\cdot s(B)+n(A)\cdot (f(A)-f(B))\cdot %n(B)\\
%&=s(A)\cdot (f(A)-f(B))\cdot s(B)+f(A)n(B)-n(A)f(B).
%\end{align*}
%Therefore,
%$$\left\|f(A)-f(B)\right\|_{\frac{p}{\theta}}^{\frac{p}{\theta}}\leq
%\left\|s(A)\cdot (f(A)-f(B))\cdot %s(B)\right\|_{\frac{p}{\theta}}^{\frac{p}{\theta}}
%+
%\left\|f(A)n(B)\right\|_{\frac{p}{\theta}}^{\frac{p}{\theta}}
%+
%\left\|n(A)f(B)\right\|_{\frac{p}{\theta}}^{\frac{p}{\theta}}.$$
\end{proof}

\begin{lem}\label{final lemma}
Let $A,B\in S_h(\cM,\tau)$, with $A-B = \sum_{k=1} ^n  x_k e_k$,
where $1\le n<\infty$,  $x_k $ are real numbers and $e_k$ are mutually  orthogonal projections.
Then, there exists a constant $C_{p,\theta}$ such that
$$\left\|f(A) -f(B) \right\|_p \le C_{p,\theta} \|f\|_{S_{d,\theta}} \left\| |A-B|^{\theta} \right\|_{p}.$$
\end{lem}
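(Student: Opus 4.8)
The plan is to bootstrap from the single-term estimate \eqref{5.6} by telescoping, and then to remove the boundedness hypothesis by truncation. If $\left\||A-B|^\theta\right\|_p=\infty$ there is nothing to prove, so I may assume this quantity is finite; discarding the indices $k$ with $x_k=0$, I may also assume $x_k\neq0$ for every $k$. Since the $e_k$ are mutually orthogonal, $|A-B|=\sum_{k=1}^n|x_k|e_k$ and hence $|A-B|^\theta=\sum_{k=1}^n|x_k|^\theta e_k$, so $\left\||A-B|^\theta\right\|_p^p=\sum_{k=1}^n|x_k|^{\theta p}\tau(e_k)$; in particular every $e_k$ is $\tau$-finite.

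First I would settle the case in which $A$ and $B$ are bounded. Put $B_0:=A$ and $B_j:=A-\sum_{k=1}^j x_ke_k$ for $1\le j\le n$, so that $B_n=B$, each $B_j$ is a bounded self-adjoint operator, and $B_{j-1}-B_j=x_je_j$ with $e_j$ a $\tau$-finite projection. Applying \eqref{5.6} to the pair $(B_{j-1},B_j)$ yields $\left\|f(B_{j-1})-f(B_j)\right\|_p\le C_{p,\theta}\|f\|_{S_{d,\theta}}|x_j|^\theta\tau(e_j)^{1/p}$. Writing $f(A)-f(B)=\sum_{j=1}^n\bigl(f(B_{j-1})-f(B_j)\bigr)$ and using the $p$-triangle inequality in $L_p(\cM,\tau)$, $0<p\le1$, I obtain
\[
\left\|f(A)-f(B)\right\|_p^p\le C_{p,\theta}^p\|f\|_{S_{d,\theta}}^p\sum_{j=1}^n|x_j|^{\theta p}\tau(e_j)=C_{p,\theta}^p\|f\|_{S_{d,\theta}}^p\left\||A-B|^\theta\right\|_p^p,
\]
which is the assertion for bounded $A,B$.

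To treat general $A,B\in S_h(\cM,\tau)$, set $r_m:=\chi_{[-m,m]}(A)$, $A_m:=Ar_m$ and $B_m:=A_m-(A-B)=B-A(\mathbf{1}-r_m)$. Then $A_m$ and $B_m$ are bounded self-adjoint with $A_m-B_m=A-B=\sum_{k=1}^n x_ke_k$, so the bounded case already proved gives $\left\|f(A_m)-f(B_m)\right\|_p\le C_{p,\theta}\|f\|_{S_{d,\theta}}\left\||A-B|^\theta\right\|_p$ for every $m$. Since $f$ is continuous with $f(0)=0$ (the latter because $|f(x)|\le\|f\|_{S_{0,\theta}}|x|^\theta$), one has $f(A_m)=f(A)r_m$. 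As $\tau(\mathbf{1}-r_m)=\tau\bigl(E^{|A|}(m,\infty)\bigr)\to0$, the projections $\mathbf{1}-r_m$ tend to $0$ in the measure topology, hence also in the local measure topology, so that $A(\mathbf{1}-r_m)\to0$ and therefore $A_m\to A$ and $B_m\to B$ in the local measure topology. By continuity of the functional calculus, $f(A_m)-f(B_m)\to f(A)-f(B)$ in the local measure topology, and the Fatou property of $\left\|\cdot\right\|_p$ yields $\left\|f(A)-f(B)\right\|_p\le\liminf_{m}\left\|f(A_m)-f(B_m)\right\|_p\le C_{p,\theta}\|f\|_{S_{d,\theta}}\left\||A-B|^\theta\right\|_p$.

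All the genuine analytic work of this lemma sits in the single-term estimate \eqref{5.6}; the remaining points requiring care are purely organizational — verifying that every intermediate operator $B_j$ in the telescoping is bounded and self-adjoint and that each $e_j$ is $\tau$-finite, and justifying the limiting step in the last paragraph (continuity of the functional calculus and lower semicontinuity of the quasi-norm with respect to the local measure topology). I expect this last limiting step to be the only real obstacle to a complete proof.
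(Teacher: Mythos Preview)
Your telescoping argument is exactly the paper's proof: set up intermediate operators differing by a single $x_ke_k$, apply \eqref{5.6} to each step, and sum via the $p$-triangle inequality. Your preliminary reduction (discarding $x_k=0$ and deducing $\tau(e_k)<\infty$ from $\||A-B|^\theta\|_p<\infty$) is a nice piece of hygiene that the paper leaves implicit.

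The one difference is your truncation step at the end, which you add because the corollary \eqref{5.6} is stated for bounded $A,B$. This step is correct but in fact unnecessary: the boundedness hypothesis in \eqref{5.6} is not used in its proof, since Lemmas~\ref{prefinal lemma} and~\ref{n lemma} are stated and proved for arbitrary $A,B\in S(\cM,\tau)$ with $A-B=xe$, $e$ $\tau$-finite. The paper simply applies \eqref{5.6} directly to the (possibly unbounded) intermediate operators. That said, your limiting argument is sound; the only small adjustment is that you already have $A_m\to A$ and $B_m\to B$ in the \emph{measure} topology $t_\tau$ (since $\tau(\mathbf{1}-r_m)\to0$ and multiplication is continuous there), and Tikhonov's continuity of the functional calculus is stated for $t_\tau$ rather than the local measure topology --- so you can run the whole passage in $t_\tau$ and avoid any question about functional calculus in the coarser topology.
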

\begin{proof}
Set $A_0:=B$ and
$$A_m:=B+\sum_{k=1}^{m } x_k e_k , ~  1 \le m\le n .$$
We have
$$f(A_n )-f(B)=\sum_{m=0}^{n-1} f(A_{m+1})-f(A_m).$$
Hence,   we have
\begin{align*}
\left\|f(A_n)-f(B)\right\|_p^p
\leq
\sum_{m=0}^{n-1} \left\|f(A_{m+1})-f(A_m) \right\|_p^p
&\stackrel{\eqref{5.6}}{\leq}
C_{p,\theta}^p\|f\|_{S_{d,\theta}}^p \sum_{m=0}^{n-1} \left\| |A_{m+1}-A_m|^\theta \right\|_p ^p \\
&~
=~C_{p,\theta}^p \|f\|_{S_{d,\theta}}^p\left\| |A-B|^\theta \right\|_p^p,
\end{align*}
which completes the proof.
\end{proof}

The latter lemma allows us to extend the result to the case when the difference $A-B$ (possibly unbounded) belongs to the noncommutative $L_p$-space, $p>0$.
A similar proof of the special  case of $t\mapsto t^\theta $ was given in  \cite[Theorem 3.4]{Ricard}.
We present a straightforward  proof below.
\begin{theorem}\label{th:main} Let $0<p\le\infty$ and $0< \theta<1$.
There exists $C_{p,\theta}$ such  that
  for
any semi-finite von Neumann algebra $(\mathcal{M},\tau)$,
and $A,\;B\in S(\mathcal{M},\tau)_h$ such that if $A-B\in L_{\theta p}(\mathcal{M},\tau)$ and any $f\in S_{d,\theta}$, then
$$
\left\| f(A)-f(B)\right\|_p\leq C_{p,\theta} \left\|f \right\|_{S_{d,\theta}} \left\| |A-B|^\theta \right\|_{p} .
$$
In particular, $f\in S_{\infty, \theta}$ is operator $\theta$-H\"older function with respect to  all $\left\|\cdot\right\|_p$, $p>0$.
\end{theorem}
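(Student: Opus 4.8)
The plan is to deduce the general statement from the special case established in Lemma~\ref{final lemma} by an approximation argument. For $p=\infty$ the claim is contained in \cite{AP3,NF,APPS}, and for $1<p<\infty$ it follows from the case $p=1$ together with Theorem~\ref{th:subm} and the Hardy--Littlewood--Polya inequality, exactly as recalled at the beginning of this section; so throughout I would assume $0<p\le 1$. Write $K:=A-B$. Since $\mu(t;|K|^\theta)=\mu(t;K)^\theta$, we have $\bigl\||A-B|^\theta\bigr\|_p=\|K\|_{\theta p}^{\theta}$, so the hypothesis $K\in L_{\theta p}(\cM,\tau)$ is precisely what makes the right-hand side of the desired inequality finite; also $f(A),f(B)\in S(\cM,\tau)$ because $|f(t)|\le\|f\|_{S_{0,\theta}}|t|^{\theta}$.

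First I would approximate $K$. As $K\in L_{\theta p}(\cM,\tau)\subseteq S(\cM,\tau)$, the spectral projection $E^{|K|}(\varepsilon,\infty)$ has finite trace for every $\varepsilon>0$; truncating the spectral resolution of the self-adjoint operator $K$ to the band where $1/n\le|K|\le n$ and then approximating the resulting operator (bounded, with finite-trace support) by a step function of its own spectral measure, I obtain
\[
K_n=\sum_{k=1}^{m_n}x_k^{(n)}e_k^{(n)},\qquad x_k^{(n)}\in\bR,\ e_k^{(n)}\ \text{mutually orthogonal $\tau$-finite projections},
\]
with $K_n\to K$ in $L_{\theta p}(\cM,\tau)$: the two tail contributions are controlled by dominated convergence applied to $\mu(K)^{\theta p}\in L_1(0,\infty)$, while the step-function error tends to $0$ even in $\|\cdot\|_\infty$ on a set of finite trace. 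Put $B_n:=A-K_n$, a self-adjoint element of $S(\cM,\tau)$. Then $A-B_n=K_n$ has exactly the form required in Lemma~\ref{final lemma} (the projections being $\tau$-finite, its proof via \eqref{5.6} applies), hence
\[
\|f(A)-f(B_n)\|_p\le C_{p,\theta}\,\|f\|_{S_{d,\theta}}\,\bigl\||A-B_n|^\theta\bigr\|_p=C_{p,\theta}\,\|f\|_{S_{d,\theta}}\,\|K_n\|_{\theta p}^{\theta}.
\]

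Next I would pass to the limit. Since $K_n\to K$ in $L_{\theta p}$ we have $\|K_n\|_{\theta p}\to\|K\|_{\theta p}$, so the right-hand side above converges to $C_{p,\theta}\|f\|_{S_{d,\theta}}\||A-B|^\theta\|_p$. Moreover $B_n-B=K-K_n\to 0$ in $L_{\theta p}$, hence $B_n\to B$ in the measure topology $t_\tau$; since every $f\in S_{d,\theta}$ with $d\ge1$ is continuous with $f(0)=0$, continuity of the functional calculus with respect to $t_\tau$ yields $f(B_n)\to f(B)$, and so $f(A)-f(B_n)\to f(A)-f(B)$ in $t_\tau$. Finally, the Fatou property of $L_p(\cM,\tau)$ (namely $\|Z\|_p\le\liminf_n\|Z_n\|_p$ whenever $Z_n\to Z$ in $t_\tau$) gives
\[
\|f(A)-f(B)\|_p\le\liminf_n\|f(A)-f(B_n)\|_p\le C_{p,\theta}\,\|f\|_{S_{d,\theta}}\,\bigl\||A-B|^\theta\bigr\|_p.
\]
The assertion for $f\in S_{\infty,\theta}$ is then immediate, since $S_{\infty,\theta}\subseteq S_{d(p),\theta}$ for every $p>0$.

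The construction of $K_n$ and the limiting argument are routine; the point that needs genuine care — and which I expect to be the main obstacle — is the $t_\tau$-continuity of $X\mapsto f(X)$ for a $\theta$-H\"older $f$ of unbounded support. It holds because $B_n\to B$ in $t_\tau$ forces $\sup_n\tau\bigl(E^{|B_n|}(R,\infty)\bigr)\to 0$ as $R\to\infty$, so that after one uniform spectral truncation at height $R$ the problem reduces to the $t_\tau$-continuity of polynomial functional calculus (a consequence of the joint $t_\tau$-continuity of multiplication on $S(\cM,\tau)$) plus an error supported on a projection of arbitrarily small trace. I would also verify that the $\tau$-finiteness implicitly used in Lemma~\ref{final lemma} is indeed available here, which it is since $K_n\in L_{\theta p}(\cM,\tau)$.
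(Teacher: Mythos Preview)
Your proposal is correct and follows essentially the same route as the paper: approximate $A-B$ by finite spectral step functions, apply Lemma~\ref{final lemma}, and pass to the limit via continuity of the functional calculus in the measure topology together with Fatou's lemma. The paper's version differs only cosmetically, choosing its approximants $K_n$ so that $|K_n|\le|A-B|$ (which yields the right-hand bound directly by monotonicity rather than via $L_{\theta p}$-convergence) and simply citing \cite{Tikhonov} for the $t_\tau$-continuity of $X\mapsto f(X)$ that you sketch in your final paragraph.
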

\begin{proof}
%We only prove the case for $0<p\le \theta<1$.
%Indeed, the case for $0<p\le \theta<1$ together with
%Theorem \ref{th:subm} and Hardy-Littlewood-Polya inequality (see e.g. \cite[Corollary 2.5]{HSZ}) implies the validity of
%the case for $p>\theta$.
We first assume that $0< p\le 1$.
Let $B-A=U|B-A|$ be the polar decomposition.
We define a sequence $\{K_n:= U\sum^{n^2}_{k=1} \frac{k-1}{n} E^{|B-A|}( \frac{k-1}{n},\frac{k}{n} ]\}$, which converges to $B-A$ in measure topology.
%where $e_i^k$ are $\tau$-finite projections and $x_i^k$ are real numbers.
%Clearly, $\|K_n -(A-B)\|_p \rightarrow 0$.
In particular, $A+K_n \stackrel{t_\tau}{\rightarrow} A+ B-A=B$ as $n\rightarrow \infty$.
By the continuity of functional calculus in $S(\cM,\tau)_h$,\footnote{See \cite{Tikhonov} or the comments below \cite[Proposition 2]{DP2}. See also the forthcoming book \cite[Chapter II, Theorem 8.7]{DPS}.}
we have $$f(A+K_n )\stackrel{t_\tau}{\rightarrow}  f(B) .$$
Applying
Lemma \ref{final lemma} to $A$ and $A+K_n$ and observing that $|K_n|\le | B-A |$, we obtain that
$$\left\|f(A) -f(A+K_n)\right\|_p  \le C_{p,\theta} \left\|f \right\|_{S_{d,\theta}}  \left \| | K_n|^\theta \right\|_p\le   C_{p,\theta} \left\|f \right\|_{S_{d,\theta}}  \left\| |A-B|^\theta  \right\|_p.$$
 Fatou's lemma (see e.g. \cite[Proposition 3.3]{DDP2} or \cite[Lemma 3.4]{FK}) implies that
\begin{align*}
\left\|f(A)-f(B)\right\|_p
&\leq\liminf_n \left\|f(A)-f(A+K_n )\right\|_p \le   C_{p,\theta} \left\|f \right\|_{S_{d,\theta}}  \left\| |A-B|^\theta  \right\|_p.
\end{align*}
The case when $p>1$ is a consequence of
 the case for $p =1 $ together with   Theorem \ref{th:subm}
 and the Hardy--Littlewood--Polya inequality \cite[Corollary 2.5]{HSZ}.
\end{proof}

\begin{remark}\label{non}
Although the  results in the present section  are  proved under the assumption that the von Neumann algebra $\cM$ acts on a separable Hilbert space,
 the above result indeed also holds for
non-separable Hilbert spaces.
\end{remark}
\begin{proof}[Sketch of proof]
Assume that $\cM$ is a general semifinite von Neumann algebra.
We first consider self-adjoint operators $X,Y\in L_{ \theta p }(\cM,\tau)$.
Note that 
  $\{ X_n :=   \sum_{k=1}^{n^2-1} \frac k n E^{X} [\frac k n, \frac{k+1}{n}  ) -\sum_{k=1}^{n^2-1} \frac k n E^{X} (-\frac{k+1}{n} , -\frac k n]  \}$ converges to $X$ and    $\{ Y_n :=  \sum_{k=1}^{n^2-1} \frac k n E^{Y} [\frac k n, \frac{k+1}{n}  )  -\sum_{k=1}^{n^2-1} \frac k n E^{Y} (-\frac{k+1}{n} , -\frac k n]  \} $ converges to $Y$.
 Consider the standard representation of $\cM$ on $L_2(\cM,\tau)$.
For every $n$,
$$\left\{ E^{X} (-n , -\frac {n^2-1} n]  ,\cdots,   E^{X} [\frac {n^2-1} n, n   ) ,  E^{X} (-n , -\frac {n^2-1} n]  ,\cdots, E^{Y} [\frac {n^2-1} n, n   )     \right\}$$
is a finite subset of $L_2(\cM,\tau)$, which generates a separable Hilbert  subspace of $L_2(\cM,\tau)$.
Hence, $$
\left\| f(X_n)-f(Y_n)\right\|_p \leq C_{p,\theta} \left\|f \right\|_{S_{d,\theta}} \left\| | X_n-Y_n|^{\theta}\right\|_{p}  .
$$
By the continuity of functional calculus and the Fatou Lemma, we obtain that
$$
\left\| f(X)-f(Y)\right\|_p \leq C_{p,\theta} \left\|f \right\|_{S_{d,\theta}}  \left\| |X-Y|^{\theta}\right\|_{p}  .
$$
The proof of \cite[Theorem 3.4]{Ricard} indeed allows us to extend the result to the general case when $X,Y\in S(\cM,\tau)$ with $X-Y\in L_{ \theta p}(\cM,\tau)$ and $\cM$ acts on a non-separable Hilbert space.
\end{proof}

\section{Operator $\theta$-H\"older  functions for $p$-th power of a symmetric space}\label{sec:symm}

Let $(\cM,\tau)$ be a semifinite von Neumann algebra represented on a Hilbert space.
In the following theorem, we obtain a   submajorization inequality  related to $\theta$-H\"{o}lder functions, which is the key tool in proving Corollary \ref{intr_main}.
Indeed, the following theorem holds under very general assumptions.
%Indeed, one can easily adopt  the proof for   cases of some other classes of functions rather than just $\theta$-H\"{o}lder functions.
% e.g., operator monotone functions.
%For the sake of coherence,
% we consider only the $\theta$-h\"{o}lder functions.
\begin{theorem}\label{th:subm}
Fix $p\in (0,\infty)$ and let  $g : \bR^+ \rightarrow \bR^+$ be a continuous increasing function.
Suppose that
$f:\bR\rightarrow \bC$ is a continuous  function   such that
 \begin{align}\label{6c1}
 \|f(X) -f(Y)\|_p \le C_{p,f,g}\|g(|X-Y|)\|_p,~ \forall X,Y\in S_h(\cM,\tau), ~g(|X-Y|)\in L_p(\cM,\tau)
 \end{align}
 and such that
 \begin{align}
 \label{6c2}\|f(X) -f(Y)\|_\infty  \le C_{\infty ,f,g}\|g(|X-Y|)\|_\infty, ~\forall  X,Y\in S_h(\cM,\tau), ~g(|X-Y|)\in \cM
 \end{align}
 for some constants $C_{p,f,g}$ and  $C_{\infty,f,g}$ depending on $g$, $p$ and $f$ only.
Then, there exists a constant $C_{p,f,g}'$ depending on $f, g$ and $p$, such that
for any $X,Y\in S_h(\cM,\tau)$, we have
$$\left(\mu(f(X)-f(Y))\right)^p \prec \prec C_{p,f,g} ' \mu(g(|X-Y|) )^p. $$
\end{theorem}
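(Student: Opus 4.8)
The plan is to reduce the submajorization statement to its two endpoint cases, $p$ and $\infty$, via a standard "truncation + Ky Fan" argument, and then invoke the given hypotheses \eqref{6c1} and \eqref{6c2}. Fix $X,Y\in S_h(\cM,\tau)$ and write $D:=g(|X-Y|)\ge 0$. Submajorization $\left(\mu(f(X)-f(Y))\right)^p\prec\prec C\,\mu(D)^p$ is equivalent, by definition, to the family of inequalities
\begin{align*}
\int_0^t \mu(s;f(X)-f(Y))^p\,ds \le C\int_0^t \mu(s;D)^p\,ds,\qquad t>0.
\end{align*}
First I would dispose of the trivial case: if the right-hand side is infinite for some (hence all large) $t$, there is nothing to prove; so assume $\int_0^t\mu(s;D)^p\,ds<\infty$ for all $t$, i.e. $\mu(D)\in L_p(0,t)$ for every finite $t$. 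Fix $t>0$ and let $r:=\mu(t;D)$; split $D=D_1+D_2$ where $D_1:=(D-r)_+$ is the "peak" part, carried by the spectral projection $e:=E^D(r,\infty)$ with $\tau(e)\le t$, and $D_2:=\min\{D,r\}\in\cM$ with $\|D_2\|_\infty\le r$.

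The next step is to transfer this splitting of $D$ to a splitting of the pair $(X,Y)$. Here I would use that $g$ is continuous and increasing, hence invertible on its range, so that $|X-Y|=g^{-1}(D)$ and the spectral projection $e=E^D(r,\infty)$ equals $E^{|X-Y|}(g^{-1}(r),\infty)$, a $\tau$-finite projection; correspondingly write $A:=X$, $B_1:=X-(X-Y)e$ and $B_2:=Y$, so that $X-B_1=(X-Y)e$ has finite support $\le t$ while $B_1-Y=(X-Y)(\mathbf 1-e)$ is bounded with $g(|B_1-Y|)=D_2$, $\|g(|B_1-Y|)\|_\infty\le r$. (One must check $B_1\in S_h(\cM,\tau)$, which is clear since $(X-Y)e\in S_h(\cM,\tau)$.) Then $f(X)-f(Y)=\big(f(A)-f(B_1)\big)+\big(f(B_1)-f(B_2)\big)$. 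For the second summand, hypothesis \eqref{6c2} gives $\|f(B_1)-f(B_2)\|_\infty\le C_{\infty,f,g}\,r$, hence $\mu(s;f(B_1)-f(B_2))^p\le C_{\infty,f,g}^p\, r^p$ for all $s$. For the first summand, hypothesis \eqref{6c1} applied to $A,B_1$ (note $g(|A-B_1|)=g(|(X-Y)e|)\le D_1+\text{(something bounded by $r$ on $e$)}$; more precisely $g(|A-B_1|)=g(|X-Y|)e=De\le D$ pointwise so it lies in $L_p$) gives $\|f(A)-f(B_1)\|_p\le C_{p,f,g}\,\|g(|A-B_1|)\|_p\le C_{p,f,g}\,\|D_1+r e\|_p\le 2^{1/p}C_{p,f,g}\big(\|D_1\|_p+r\,\tau(e)^{1/p}\big)$, and since $\|D_1\|_p^p=\int_0^{\tau(e)}(\mu(s;D)-r)^p\,ds\le\int_0^t\mu(s;D)^p\,ds$ and $r^p\tau(e)\le\int_0^t\mu(s;D)^p ds$ (as $r=\mu(t;D)\le\mu(s;D)$ for $s\le\tau(e)\le t$), we get $\|f(A)-f(B_1)\|_p^p\le C\int_0^t\mu(s;D)^p\,ds$ for a constant $C=C(p,f,g)$.

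Finally I would assemble the two estimates using the Ky Fan / Hardy–Littlewood inequality for generalized singular values: for $Z_1,Z_2\in S(\cM,\tau)$ one has $\int_0^t\mu(s;Z_1+Z_2)^p\,ds\le 2^{\max\{1,p\}-1}\big(\int_0^t\mu(s;2Z_1)^p ds+\int_0^t\mu(s;2Z_2)^p ds\big)$ — or, cleaner, the subadditivity $\mu(s_1+s_2;Z_1+Z_2)\le\mu(s_1;Z_1)+\mu(s_2;Z_2)$ together with the $p$-convexity of $u\mapsto u^p$ — to bound $\int_0^t\mu(s;f(X)-f(Y))^p\,ds$ by a constant times $\int_0^t\mu(s;f(A)-f(B_1))^p\,ds+\int_0^t\mu(s;f(B_1)-f(B_2))^p\,ds$. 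The first term is $\le\|f(A)-f(B_1)\|_p^p\le C\int_0^t\mu(s;D)^p ds$ by the previous paragraph, and the second is $\le t\cdot C_{\infty,f,g}^p r^p=C_{\infty,f,g}^p\,t\,\mu(t;D)^p\le C_{\infty,f,g}^p\int_0^t\mu(s;D)^p ds$ since $\mu(t;D)\le\mu(s;D)$ for $s\le t$. Taking $C'_{p,f,g}$ to be the resulting constant and letting $t$ range over all positive reals yields the claimed submajorization. The main obstacle I anticipate is the bookkeeping of constants through the $p$-quasi-triangle inequalities and the verification that the truncation of $D$ lifts to an admissible perturbation of $(X,Y)$ lying in the right spaces; the conceptual content — "peak part handled by the $L_p$-hypothesis, tail handled by the $L_\infty$-hypothesis, glued by Ky Fan" — is routine.
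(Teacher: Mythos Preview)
Your proposal is correct and follows essentially the same strategy as the paper: introduce an intermediate self-adjoint operator by cutting $X-Y$ with a spectral projection of $|X-Y|$ of trace at most $t$, apply hypothesis \eqref{6c1} to the pair whose difference is the ``large'' piece supported on that projection, apply hypothesis \eqref{6c2} to the pair whose difference is the bounded remainder, and combine via the quasi-triangle inequality for singular values. Your $B_1$ is exactly the paper's $X_t$ (the paper writes $X_t=Y+(X-Y)e_t$, you write $B_1=X-(X-Y)e$, and these coincide since $e$ commutes with $X-Y$); the only cosmetic difference is that the paper first reduces to the $\tau$-compact case and passes to an atomless algebra so as to choose $e_t$ with $\tau(e_t)=t$ exactly, whereas you work directly with $\tau(e)\le t$, which is slightly more economical.
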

\begin{proof}
Without loss of generality, we may assume that $\cM$ is atomless (see e.g. \cite[Lemma 2.3.18]{LSZ}).
We first consider the case when $X-Y$ is $\tau$-compact.

%%We first prove the case when $X$ and $Y$ have $\tau$-finite supports.
%Let $X-Y = \int_{-\infty}^{\infty} \lambda d E_\lambda $ be the spectral decomposition.
%By functional calculus, we have $|X-Y |= \int_{-\infty}^{\infty} |\lambda| d E_\lambda$.
For every $t>0$, we can find a projection $e_t\in \cM$ such that $\mu(s;X-Y)=\mu(s;|X-Y|e_t)$ for all $0\le s<t$ with $\tau(e_t)=t$ and $\|(X-Y)e^\perp_t\|_\infty \le \mu(t;X-Y) $ (see
\cite[Chapter III, Lemma 7.7]{DPS} or  \cite[p.953]{DDP1992}% or \cite{OV1}
).
Moreover, $e_t$ can be chosen to  commute with $X-Y$.
By Definition \ref{mu}, we have $\|(X-Y)e^\perp_t\|_\infty \ge \mu(t;X-Y) $. That is, $\|(X-Y)e^\perp_t\|_\infty = \mu(t;X-Y) $.
Let $$X_t:= Y+(X-Y)e_t .$$
Letting $c_p:= \max \{1,2^{p-1}\}$,
by \cite[Corollary 2.3.16]{LSZ}
and the fact that $(a+b)^p \le c_p (a^p+b^p)$ for any $a,b\ge 0$ (see \cite[(2.2) and (2.3)]{KPR}),
 we have
 $$\mu(f(X) -f(Y) )
\le  \mu(f(X_t) -f(Y))+ \left \| f(X) -f(X_t ) \right \|_\infty ,~ \forall t>0,   $$
 and therefore,  %\footnote{If $0<p\le 1$, then we can use \cite[Corollary 5.4]{DS2009} instead of \cite[Corollary 2.3.16]{LSZ}. In this case, the constant $c_p$ below may be replaced by $1$.},
%\footnote{For the third inequality, we refer to \cite[Theorem 3.3.4]{LSZ}}
\begin{align}\label{firststep}
\int_0^t \mu(s; f(X) -f(Y) )^p ds
&\le \int_0^t \Big( \mu(s; f(X_t) -f(Y)) + \left \| f(X) -f(X_t ) \right \|_\infty  \Big)^p ds\nonumber \\
&\le c_p\int_0^t \mu(s; f(X_t) -f(Y))^p ds +  c_p \int_0^t  \left \| f(X) -f(X_t ) \right \|_\infty^p ds \nonumber \\
&=   c_p\int_0^t \mu(s; f(X_t) -f(Y ))^p ds +  tc_p   \left \| f(X) -f(X_t ) \right \|_\infty^p \nonumber\\
& \le c_p\left\| f(X_t)  -f(Y) \right \|_p^p +   t c_p \left\| f(X) -f(X_t)\right\|_\infty^p.
\end{align}
Note that since $g$ is assumed to be monotone, we have
$\mu(g(|(X-Y ) e_t  |) )  =g(\mu((X-Y)e_t  ) )  =g(\mu(|X-Y|  ) )\chi_{(0,t)} $  and
$\left\| g( |(X  -  Y)e^\perp_t|)\right\|_\infty =  g(\mu(t;X  -  Y )  ) $ \cite[Corollary 2.3.17]{LSZ}.
Now, appealing to the hypothesis,  we obtain that
\begin{align*}
&\quad \int_0^t \mu(s; f(X) -f(Y) )^p ds \\
&\stackrel{\eqref{firststep}}{\le}
  c_p C_{p,f,g}^p  \left\|g(|X_t  -Y|) \right\|_{p}^{p } + c_p C_{\infty,f,g}^p t  \left\| g(|X  -  X_t | ) \right\|_\infty^{p} \\
& ~= ~ c_pC_{p,f,g}^p  \left\|g(|X-Y|e_t )\right\|_{p}^{p } +  c_pC_{\infty,f,g} ^p  t \left \| g(|X  -  Y|e^\perp_t )    \right \|_\infty^{p} \\
&~\le~  c_p(C_{p,f,g}^p +C_{\infty,f,g}^p ) \int_0^t \mu(s; g(|X  - Y|)) ^{p} ds,
\end{align*}
which completes the proof for the case when $X-Y$ is a $\tau$-compact operator.
For simplicity, we denote $C_1:=c_p(C_{p,f}^p +C_{\infty,f}^p )$.

Now, assume that $X-Y$ is not necessarily  $\tau$-compact.
Let
$$Z:=   (X-Y -\mu(\infty; X-Y)) _+  - (X-Y + \mu(\infty;X-Y))_-. $$
It is easy to see that $Z$ is $\tau$-compact (see e.g. \cite[Corollary 2.3.17 (d)]{LSZ}).
By the Spectral Theorem, we have
$$ \|X-Y-Z\|_\infty \le \mu(\infty;X-Y),$$
and
$$\mu(f(X) -f(Y)) \le \mu( f(Y+Z) -f(Y))  +  \left\| f(X) -f(Y+Z) \right\|_\infty .$$
Hence, by the result for $\tau$-compact operators, there exists a constant $C_{p,f,g}'$ such that
\begin{align*}
&\quad \int_0^t  \mu(s;f(X)-f(Y))^p ds \\
&\le c_p \int_0^t \mu(s;f(Y+Z)-f(Y))^p ds +c_p \int_0^t \left\| f(X)-f (Y+Z) \right\|_\infty^p ds  \\
&\le c_p C_1\int_0^t \mu(s;g(|Z|))^pds  + C_{\infty,f,g}\cdot  c_p \cdot  t  \left\| g(|X-Y-Z|)\right\|_\infty^p\\
&\le  c_p  C_1 \int_0^t \mu(s; g(|X-Y|))^p   ds + C_{\infty,f,g}  \cdot c_p \cdot   t \mu(\infty;g(|X-Y|))^p\\
&\le C_{p,f,g}'  \int_0^t  \mu(s;g(|X-Y|))^p ds,
\end{align*}
which completes the proof.

\end{proof}

\begin{remark}
By the above theorem, numerous results concerning operator  inequalities in the setting of noncommutative $L_p$-spaces, $p\ge 1$ (see e.g. \cite{AP2,CPSZ}), can be extended to the case of fully symmetric spaces.
\end{remark}

The main object of this section is the so-called $p$-th powers of symmetric spaces,
which play an important role in  analysis (see e.g. \cite{BHLS, ORS,Sukochev2016}).
Following the notation introduced
in  \cite{Xu} (see also \cite{DDP1992,ORS}), for $0<p<\infty$ and a quasi-Banach symmetric space $E(\cM,\tau)$,
 the \emph{$\frac 1p$-th power} of $E(\cM,\tau)$ is defined
by
\begin{align}\label{def:pcon}
 E(\cM,\tau)^{(p)} =\{  X\in S(\cM,\tau) : |X|^p \in   E(\cM,\tau)\} , \quad \|X\|_{  E^{(p)}} = \left\|\left|X\right|^p\right\|^{1/p}_  E.
 \end{align}
It is known (see e.g. \cite[Proposition 3.1]{DDS2014}) that $  E^{(p)}(\cM,\tau) =   E(\cM,\tau)^{(p)}$, where $  E^{(p)}(\cM,\tau)$ is the quasi-Banach symmetric space corresponding to the $\frac 1p$-th power  $  E(0,\infty)^{(p)}$ of the quasi-Banach symmetric function space $  E(0,\infty)$.
If $E(0,\infty)$ is a symmetric space, then  $E^{(p)}(\cM,\tau)$ is $p$-convex (see \cite[Proposition 3.1]{DDS2014}), that is,
there exists a constant $M$ such that for any finite sequence $\{X_k\}_{k=1}^n\subset E^{(p)}(\cM,\tau)$, we have
$$\left\|\left(\sum_{k=1}^n |X_k|^p \right)^{1/p} \right\|_{E^{(p)}}\le M \left(\sum_{k=1}^n \left\| X_k \right\|_{E^{(p)}} ^p  \right)^{1/p}. $$
If $E(0,\infty)$ is a fully symmetric space and  $0< p<\infty$,
then it is clear that for every $X\in E^{(p)}(\cM,\tau)  $ and  $Y\in S(\cM,\tau)$ with  $\mu(Y)^p \prec \prec \mu(X)^p$, we have  $Y\in E^{(p)} (\cM,\tau) $ with $\|Y\|_{E^{(p)}}\le \|X\|_{E^{(p)}}$.
We note that  $\left\|\cdot\right\|_{E^{(p)}}$ is a $p$-norm when $0< p\le 1$ (see e.g. \cite[Corollary 5.4]{DS2009} or \cite[Theorem 8.10]{Kalton_S}).
Most quasi-Banach symmetric spaces which occur in the
literature (such as $L_p$-spaces, $L_{p,q}$-spaces, etc.) can be constructed as the $1/p$-th power of a fully symmetric space.

Taking $g(s)=s^\theta$, $s\in \mathbb{R}^+$, $\theta<1$ and $f\in S_{d,\theta(p)}$, $0<p<\infty$.
Conditions \eqref{6c1} and \eqref{6c2} in   Theorem \ref{th:subm} are satisfied  (see Theorem \ref{th:main}
  and \cite[Theorem 4.1]{AP3}, respectively).
 By
  Theorem~\ref{th:subm}, we obtain  \eqref{intro_subm}, i.e.,  for any $X=X^*, Y=Y^*\in S(\cM,\tau)$, we have
  \begin{align}
\left(\mu\left(f(X) -f(Y)\right) \right)^p  \prec \prec C_{p,\theta}\|f\|_{S_{d,\theta}}^p \cdot \mu(|X-Y|^\theta)^p.
\end{align}
The following theorem is an immediate consequence of this inequality.
%, which completes  the case when $p>\theta$.
\begin{theorem}\label{th:mainE}For any fully symmetrically normed space $E(0,\infty)$, $0<p<\infty$, $0<\theta<1$ and $f\in S_{d,\theta}$, where $d=d(p)$,
there exists a constant $C_{p,\theta}$ such that for any $X,Y\in S(\cM,\tau)_h$ with $X-Y\in E^{(p)}(\cM,\tau)$,
we have
\begin{align}\label{eqmain}
\left\|f(X)-f(Y)\right\|_{E^{(p)}} \le C_{p,\theta} \left\|f \right\|_{S_{d,\theta}} \left\||X- Y|^\theta\right\|_{E^{(p)}}.\end{align}
\end{theorem}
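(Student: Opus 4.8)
The plan is to obtain \eqref{eqmain} as a direct consequence of the submajorization estimate \eqref{intro_subm} recorded just above, namely
\begin{align*}
\left(\mu\left(f(X)-f(Y)\right)\right)^p\prec\prec C_{p,\theta}\|f\|_{S_{d,\theta}}^p\cdot\mu\left(|X-Y|^\theta\right)^p,\quad X=X^*,Y=Y^*\in S(\cM,\tau),
\end{align*}
combined with the fact, recalled in the paragraph preceding this theorem, that for a fully symmetric function space $E(0,\infty)$ and $0<p<\infty$ the space $E^{(p)}(\cM,\tau)$ is fully symmetric with respect to $\mu(\cdot)^p$-submajorization: whenever $W\in E^{(p)}(\cM,\tau)$ and $Z\in S(\cM,\tau)$ satisfy $\mu(Z)^p\prec\prec\mu(W)^p$, one has $Z\in E^{(p)}(\cM,\tau)$ and $\|Z\|_{E^{(p)}}\le\|W\|_{E^{(p)}}$. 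Recall that the validity of \eqref{intro_subm} is itself obtained by applying Theorem~\ref{th:subm} with $g(s)=s^\theta$, whose hypotheses \eqref{6c1} and \eqref{6c2} are furnished by Theorem~\ref{th:main} and by \cite[Theorem 4.1]{AP3}, respectively.

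First I would absorb the scalar constant on the right-hand side of \eqref{intro_subm} into the generalised singular value function. Put $c:=C_{p,\theta}^{1/p}\|f\|_{S_{d,\theta}}$. By absolute homogeneity of $\mu(\cdot)$ we have $C_{p,\theta}\|f\|_{S_{d,\theta}}^p\,\mu(t;|X-Y|^\theta)^p=\mu\bigl(t;c\,|X-Y|^\theta\bigr)^p$ for every $t>0$, so \eqref{intro_subm} reads
\begin{align*}
\left(\mu\left(f(X)-f(Y)\right)\right)^p\prec\prec\left(\mu\left(c\,|X-Y|^\theta\right)\right)^p.
\end{align*}
If $|X-Y|^\theta\notin E^{(p)}(\cM,\tau)$, then the right-hand side of \eqref{eqmain} is infinite and there is nothing to prove; hence we may and do assume $|X-Y|^\theta\in E^{(p)}(\cM,\tau)$, equivalently $c\,|X-Y|^\theta\in E^{(p)}(\cM,\tau)$.

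Then I would simply invoke the full-symmetry property stated above with $W=c\,|X-Y|^\theta$ and $Z=f(X)-f(Y)$: it yields $f(X)-f(Y)\in E^{(p)}(\cM,\tau)$ together with
\begin{align*}
\left\|f(X)-f(Y)\right\|_{E^{(p)}}\le\left\|c\,|X-Y|^\theta\right\|_{E^{(p)}}=c\left\||X-Y|^\theta\right\|_{E^{(p)}}=C_{p,\theta}^{1/p}\|f\|_{S_{d,\theta}}\left\||X-Y|^\theta\right\|_{E^{(p)}},
\end{align*}
and renaming $C_{p,\theta}^{1/p}$ as $C_{p,\theta}$ gives \eqref{eqmain}. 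I do not expect a genuine obstacle here: the entire content of the theorem has already been packaged into \eqref{intro_subm}, and the only points requiring a modicum of care are the homogeneity bookkeeping just carried out and the trivial disposal of the case in which the right-hand side of \eqref{eqmain} is infinite. If one wished, the same reasoning shows more: $f(X)-f(Y)$ lies in every fully symmetric $E^{(p)}(\cM,\tau)$ that contains $|X-Y|^\theta$, with the displayed norm bound.
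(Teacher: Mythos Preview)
Your proposal is correct and follows essentially the same approach as the paper: the paper derives \eqref{intro_subm} from Theorem~\ref{th:subm} (with hypotheses supplied by Theorem~\ref{th:main} and \cite[Theorem 4.1]{AP3}) and then states that Theorem~\ref{th:mainE} is ``an immediate consequence of this inequality,'' which is precisely the passage via the full-symmetry property of $E^{(p)}(\cM,\tau)$ that you spell out. Your version is simply more explicit about the constant bookkeeping and the trivial infinite case.
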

Now, we consider two classic operator-monotone functions
 $t\mapsto \log(|t|+1)\in S_{\infty,\theta} $ and $t\mapsto \frac{|t|}{r+|t|} \in S_{\infty,\theta}$, where $r>0$, obtaining analogue of \cite[(ii) of Corollary 2]{Ando} (see also \cite[Theorem 3.4]{Kittaneh_K} for similar estimates).
\begin{corollary}
Assume that $E(0,\infty)$
is an arbitrary  fully symmetrically normed space and let
 $f(t):= \log (|t| +1)$ (or $f(t)={\rm sgn(t)}\log (|t| +1)$,  $\frac{|t|}{r+|t|}$, $\frac{t}{r+|t|}$, $r>0$), $t\in \bR$.
Then, for any $0<p<\infty$ and $0<\theta<1$, there exists a constant $C_{p,\theta}$ such that
\begin{align*}
\left\|f(X)-f(Y)\right\|_{E^{(p)}} \le C_{p,\theta} \left\|f \right\|_{S_{d,\theta}}  \left\||X- Y|^\theta\right\|_{E^{(p)}}, ~ X,Y\in S(\cM,\tau)_h. \end{align*}
\end{corollary}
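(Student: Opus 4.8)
The plan is to obtain this corollary as an immediate application of Theorem~\ref{th:mainE}, once one checks that each of the listed functions belongs to $S_{d,\theta}$ (in fact to $S_{\infty,\theta}$) for $d=d(p)$ and every $\theta\in(0,1)$. Indeed, Theorem~\ref{th:mainE} asserts exactly the inequality \eqref{eqmain} for an arbitrary fully symmetrically normed space $E(0,\infty)$, $0<p<\infty$, $0<\theta<1$ and $f\in S_{d,\theta}$ with $d=d(p)$; so the only work is the membership verification, together with the observation that $\|f\|_{S_{d,\theta}}$ is then the finite constant appearing on the right-hand side.

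First I would record that each of $f(t)=\log(|t|+1)$, $f(t)=\mathrm{sgn}(t)\log(|t|+1)$, $f(t)=\frac{|t|}{r+|t|}$ and $f(t)=\frac{t}{r+|t|}$ is smooth on $\bR\setminus\{0\}$, continuous on $\bR$, and vanishes at $0$, so membership in $S_{\infty,\theta}$ reduces to the finiteness of $\sup_{x\ne0}|x|^{k-\theta}|f^{(k)}(x)|$ for every $k\ge0$. For $f(t)=\log(|t|+1)$ and $x>0$ one has $f^{(k)}(x)=(-1)^{k-1}(k-1)!\,(1+x)^{-k}$ when $k\ge1$, whence
\[
|x|^{k-\theta}|f^{(k)}(x)|=(k-1)!\,\frac{x^{k-\theta}}{(1+x)^{k}},
\]
which vanishes as $x\to0^{+}$ (since $k-\theta>0$) and as $x\to\infty$ (since $k-\theta<k$), and is therefore bounded on $(0,\infty)$; for $k=0$ the function $x^{-\theta}\log(1+x)$ is bounded near $0$ because $\log(1+x)\sim x$, and near $\infty$ because the logarithm grows slower than any positive power of $x$. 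By symmetry the same bounds hold for $x<0$. The computation for $f(t)=\frac{|t|}{r+|t|}$ is entirely parallel: for $x>0$, $f^{(k)}(x)=(-1)^{k-1}k!\,r\,(r+x)^{-k-1}$ when $k\ge1$, so $|x|^{k-\theta}|f^{(k)}(x)|=k!\,r\,x^{k-\theta}(r+x)^{-k-1}$ vanishes at $0$ and at $\infty$, while for $k=0$ the function $x^{1-\theta}(r+x)^{-1}$ is bounded. Hence $\log(|t|+1),\ \tfrac{|t|}{r+|t|}\in S_{\infty,\theta}$; this is also noted in Section~\ref{s:p}.

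For the two odd functions I would write $f(t)=\mathrm{sgn}(t)\,g(|t|)$ with $g(s)=\log(1+s)$, respectively $g(s)=\frac{s}{r+s}$, on $[0,\infty)$; a direct computation then gives $|f^{(k)}(x)|=|g^{(k)}(|x|)|$ for all $x\ne0$ and $k\ge0$, so $\sup_{x\ne0}|x|^{k-\theta}|f^{(k)}(x)|=\sup_{s>0}s^{k-\theta}|g^{(k)}(s)|<\infty$ by the bounds just established, and since these functions are continuous on $\bR$ with value $0$ at the origin and smooth off the origin, they too lie in $S_{\infty,\theta}$. Finally, fixing $p\in(0,\infty)$ and $d=d(p)$, each of the four functions lies in $S_{d,\theta}$ with $\|f\|_{S_{d,\theta}}<\infty$, so Theorem~\ref{th:mainE} applied to $X,Y\in S(\cM,\tau)_h$ with $X-Y\in E^{(p)}(\cM,\tau)$ yields the asserted estimate. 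The argument is essentially routine; the only step requiring a little care is the behaviour as $x\to\infty$, where one invokes that $\log(1+x)$ grows more slowly than any positive power of $x$ and that the rational functions decay, so that the Schwartz-type semi-norms $\|f\|_{S_{d,\theta}}$ are indeed finite for every $d$.
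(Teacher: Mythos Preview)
Your proposal is correct and follows exactly the approach the paper intends: the paper gives no explicit proof of this corollary, treating it as an immediate consequence of Theorem~\ref{th:mainE} together with the remark (made in Section~2.3) that the classic operator-monotone functions $t\mapsto\log(|t|+1)$ and $t\mapsto\frac{|t|}{r+|t|}$ lie in $S_{\infty,\theta}$. You have simply written out the routine verification of that membership, which matches the paper's implicit reasoning.
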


%\begin{proof}
%The case when $0<p\le \theta$ follows from Theorem \ref{th:main} and Theorem \ref{th:subm}.
%The case when $p>\theta$ follows from the Hardy-Littlewood-Polya inequality (see e.g. \cite[Corollary 2.5]{HSZ}) and the proven case when $p=\theta$, i.e.,
%$$ \mu(f(A) -f(B))^\theta \prec\prec C_{\theta,f} \mu(|A-B|^\theta)  $$
%\end{proof}

For invertible functions, we have the following result (similar results have been obtained in \cite{Ando} and  \cite{AZ}).
\begin{corollary}\label{cor:inv}Let $\theta \in (1,\infty)$ and $p\in (0,\infty ) $.
If $f:\bR \rightarrow \bR \in S_{d,1/\theta}$ is invertible,
then there exists a constant $C_{p,\theta}$ such that
 $$ C_{p,\theta} \left\|f \right\|_{S_{d, 1/ \theta}}^p  | f^{-1}(X) -  f^{-1}(Y)|^{p/\theta}  \succ\succ  |X-Y|^{p} , ~X,Y\in S(\cM,\tau)_h .
$$
In particular, assuming $E(0,\infty)$ is a fully symmetrically normed space, we have
\begin{align}
\label{inver} C_{p,\theta}^{\theta/p} \left\|f \right\|_{S_{d, 1/ \theta}}^\theta \left\| f^{-1}(X) -  f^{-1}(Y) \right\|_{E^{(p)}}\ge
 \left\| (X-Y)^\theta \right\|_{E^{(p)}} .\end{align}
\end{corollary}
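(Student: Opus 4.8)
The plan is to deduce both assertions from the submajorization estimate \eqref{intro_subm}, applied not to $X,Y$ but to $f^{-1}(X),f^{-1}(Y)$, together with the stability of Hardy--Littlewood--P\'olya submajorization under composition with increasing convex functions vanishing at $0$. First I would put $A:=f^{-1}(X)$ and $B:=f^{-1}(Y)$. A continuous bijection of $\bR$ is a strictly monotone homeomorphism, so $f^{-1}\colon\bR\to\bR$ is continuous; hence $A,B\in S(\cM,\tau)_h$ (a continuous function of a self-adjoint $\tau$-measurable operator is again $\tau$-measurable, since $\{t:|g(t)|>\mu\}\subseteq\{|t|>R_\mu\}$ with $R_\mu\to\infty$), and the composition rule for continuous functional calculus gives $f(A)=X$ and $f(B)=Y$. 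Since $\theta>1$ the H\"older exponent $\vartheta:=1/\theta$ lies in $(0,1)$ and $f\in S_{d(p),\vartheta}$, so \eqref{intro_subm} (which, via Theorem~\ref{th:subm} with $g(s)=s^{\vartheta}$ whose hypotheses \eqref{6c1}--\eqref{6c2} hold by Theorem~\ref{th:main} and \cite[Theorem 4.1]{AP3}, is valid for \emph{all} self-adjoint operators) applied to the pair $(A,B)$ reads
\[
\mu\big(f(A)-f(B)\big)^{p}\prec\prec C_{p,\theta}\,\|f\|_{S_{d,1/\theta}}^{p}\,\mu\big(|A-B|^{\vartheta}\big)^{p}.
\]
Using $f(A)=X$, $f(B)=Y$ and $\mu(|A-B|^{\vartheta})^{p}=\mu(A-B)^{p/\theta}$, this is precisely the first assertion,
\[
\mu(X-Y)^{p}\prec\prec C_{p,\theta}\,\|f\|_{S_{d,1/\theta}}^{p}\,\mu\big(f^{-1}(X)-f^{-1}(Y)\big)^{p/\theta}.
\]

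For \eqref{inver} I would apply the increasing convex function $t\mapsto t^{\theta}$ (licit since $\theta>1$ and it vanishes at $0$) to the last display: by the Hardy--Littlewood--P\'olya principle submajorization is preserved under such maps on the level of singular value functions (see \cite{LSZ,DPS,MOA}), so raising both sides to the power $\theta$ gives
\[
\mu\big(|X-Y|^{\theta}\big)^{p}=\mu(X-Y)^{p\theta}\prec\prec C_{p,\theta}^{\theta}\,\|f\|_{S_{d,1/\theta}}^{p\theta}\,\mu\big(f^{-1}(X)-f^{-1}(Y)\big)^{p}.
\]
Since $E(0,\infty)$ is fully symmetric, the property recorded just before Theorem~\ref{th:mainE} --- namely $\mu(U)^{p}\prec\prec\mu(V)^{p}$ forces $U\in E^{(p)}$ with $\|U\|_{E^{(p)}}\le\|V\|_{E^{(p)}}$ --- converts the previous display into
\[
\big\|(X-Y)^{\theta}\big\|_{E^{(p)}}\le C_{p,\theta}^{\theta/p}\,\|f\|_{S_{d,1/\theta}}^{\theta}\,\big\|f^{-1}(X)-f^{-1}(Y)\big\|_{E^{(p)}},
\]
which is \eqref{inver}.

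The routine checks --- measurability of $f^{-1}$ of a measurable operator, and the identity $f\circ f^{-1}=\mathrm{id}$ realized through functional calculus --- have been dispatched above, and the submajorizations in \eqref{intro_subm} require no integrability of $X-Y$. The only genuine point requiring care is the final passage to a single space $E^{(p)}$: it is exactly the convexity of $t\mapsto t^{\theta}$, $\theta>1$, that lets one ``power up'' the exponent in the estimate, and this is why the argument must run through the submajorization form \eqref{intro_subm} rather than the norm inequality of Theorem~\ref{th:mainE} directly --- the latter would only give a comparison between the \emph{different} spaces $E^{(p)}$ and $E^{(p/\theta)}$.
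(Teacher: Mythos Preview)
Your proof is correct and follows essentially the same approach as the paper: both apply \eqref{intro_subm} with exponent $1/\theta$ to the pair $(f^{-1}(X),f^{-1}(Y))$ to obtain the first submajorization, and then invoke the Hardy--Littlewood--P\'olya inequality (convexity of $t\mapsto t^{\theta}$ for $\theta>1$) together with full symmetry of $E$ to deduce \eqref{inver}. Your write-up is simply more explicit about the measurability of $f^{-1}(X)$ and the mechanism by which convexity upgrades the exponent, points the paper leaves implicit.
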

\begin{proof}
By \eqref{intro_subm}, we have
 $$ \left(\mu(f(X) -f(Y))\right)^p \prec \prec C_{p,\theta} \left\|f \right\|_{S_{d, 1/ \theta}} ^p \left(\mu (X-Y)\right)^{p/\theta}.$$
Substituting $X$ and $Y$ with $f^{-1}(X) $
and $f^{-1}(Y) $, we obtain that
$$  C_{p,\theta} \left\|f \right\|_{S_{d, 1/ \theta}}  ^p  \mu(f^{-1}(X) -  f^{-1}(Y))^{p/\theta} \succ\succ |X-Y|^p  .$$
Note that $\theta>1$.
 Inequality \eqref{inver} follows from the Hardy--Littlewood--Polya inequality  \cite[Corollary 2.5]{HSZ}.
\end{proof}
In particular, we obtain the following reverse inequality of \cite{Ricard}, which extends \cite[Corollary 4]{Ando} and \cite[Corollary 3 and Corollary 4]{Bhatia}.
Note that the following corollary holds even for operators in  $S(\cM,\tau)_h$ rather than positive operators as in  \cite{Ando} (see similar results in \cite[Corollaries 1 and 2]{AZ}).
We note  that the ``{\rm sgn}'' below can not be omitted (consider the case when  $X=-Y$).
\begin{corollary}\label{cor:>1}For any  $\theta\in (1,\infty)$,  $p\in (0,\infty)$ and  fully symmetrically normed space  $E(0,\infty)$, there exists a constant $C_{p,\theta}$ depending on $p$ and $\theta$ only such that
$$\left\| {\rm sgn}(X) |X|^\theta  - {\rm sgn}(Y) |Y|^\theta \right\|_{E^{(p)}}\ge C_{p,\theta }  \left\| |X-Y|^\theta \right\|_{E^{(p)}}, ~X,Y\in S_h(\cM,\tau), $$
and
$$\left\| {\rm sgn}(X) \left (e^{|X|} -1\right)  - {\rm sgn}(Y) \left (e^{|Y|} -1\right) \right\|_{E^{(p)}}\ge C_{p,\theta }  \left\| |X-Y|^\theta \right\|_{E^{(p)}}, ~X,Y\in S_h(\cM,\tau) . $$
\end{corollary}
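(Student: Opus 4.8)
The plan is to deduce both inequalities directly from Corollary~\ref{cor:inv}, more precisely from inequality~\eqref{inver}, by applying it to an invertible function $f$ whose inverse is exactly the map appearing on the left-hand side of the assertion. Since $\theta\in(1,\infty)$ we have $1/\theta\in(0,1)$, so the class $S_{d,1/\theta}$ with $d=d(p)$ is precisely the one featuring in Corollary~\ref{cor:inv}.

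For the first inequality I would take $f(t):={\rm sgn}(t)\,|t|^{1/\theta}$. One checks immediately that $f\in C^{\infty}(\bR\setminus\{0\})\cap C(\bR)$ and that $|f^{(k)}(x)|=c_{k,\theta}\,|x|^{1/\theta-k}$ for all $x\ne0$ and $k\ge0$, where $c_{k,\theta}=\prod_{j=0}^{k-1}|1/\theta-j|$; hence $\sup_{x\ne0}|x|^{-1/\theta+k}|f^{(k)}(x)|=c_{k,\theta}$ and $f\in S_{\infty,1/\theta}\subset S_{d,1/\theta}$ with $\left\|f\right\|_{S_{d,1/\theta}}=\max_{0\le k\le d}c_{k,\theta}$ depending on $\theta$ and $d$ only. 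Next, $f$ is a strictly increasing continuous bijection of $\bR$ with inverse $f^{-1}(t)={\rm sgn}(t)\,|t|^{\theta}$; since $|f^{-1}(t)|\to\infty$ as $|t|\to\infty$, applying $f^{-1}$ through the Borel functional calculus keeps a self-adjoint $\tau$-measurable operator $\tau$-measurable, so $f^{-1}(X)={\rm sgn}(X)\,|X|^{\theta}\in S(\cM,\tau)_h$ for every $X\in S(\cM,\tau)_h$. Feeding this $f$ into \eqref{inver} (with $C_{p,\theta}$ as in Corollary~\ref{cor:inv}) and solving for $\left\|f^{-1}(X)-f^{-1}(Y)\right\|_{E^{(p)}}$ then yields
$$\left\|{\rm sgn}(X)|X|^{\theta}-{\rm sgn}(Y)|Y|^{\theta}\right\|_{E^{(p)}}\ \ge\ \frac{1}{C_{p,\theta}^{\theta/p}\left\|f\right\|_{S_{d,1/\theta}}^{\theta}}\,\left\||X-Y|^{\theta}\right\|_{E^{(p)}},$$
which is the claim, with constant $\big(C_{p,\theta}^{\theta/p}\left\|f\right\|_{S_{d,1/\theta}}^{\theta}\big)^{-1}$ depending on $p$ and $\theta$ only.

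For the second inequality the argument is identical with $f(t):={\rm sgn}(t)\,\log(|t|+1)$, which is one of the operator-monotone-type examples recorded in Section~\ref{s:p} as belonging to $S_{\infty,\theta}$ for every exponent in $(0,1)$, hence $f\in S_{\infty,1/\theta}\subset S_{d,1/\theta}$; it is again a continuous increasing bijection of $\bR$, now with inverse $f^{-1}(t)={\rm sgn}(t)\,(e^{|t|}-1)$, so that $f^{-1}(X)={\rm sgn}(X)(e^{|X|}-1)$, and \eqref{inver} delivers the stated bound after renaming the constant exactly as before.

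I do not expect a real obstacle here: the whole content is in the two elementary checks that the model functions lie in $S_{d,1/\theta}$ with $S_{d,1/\theta}$-norm depending on $\theta$ (and $d$) alone and are genuine bijections of $\bR$ with the indicated inverses, plus the routine bookkeeping that a continuous (possibly unbounded) real function of a self-adjoint $\tau$-measurable operator is again $\tau$-measurable, so that Corollary~\ref{cor:inv} applies on all of $S(\cM,\tau)_h$. Finally, the remark that ``${\rm sgn}$'' cannot be dropped is seen at once from $Y=-X$ with $X\ge0$: then the left-hand side of either inequality vanishes, whereas $\left\||X-Y|^{\theta}\right\|_{E^{(p)}}=2^{\theta}\left\|X^{\theta}\right\|_{E^{(p)}}$ is nonzero whenever $X\ne0$.
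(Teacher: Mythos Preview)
Your proposal is correct and follows essentially the same approach as the paper: both apply Corollary~\ref{cor:inv} with $f(t)={\rm sgn}(t)|t|^{1/\theta}$ and $f(t)={\rm sgn}(t)\log(|t|+1)$, so that $f^{-1}$ is precisely the map appearing on the left-hand side of each asserted inequality. The paper's proof is a single line invoking Corollary~\ref{cor:inv}; you have simply filled in the routine verifications that these $f$ lie in $S_{d,1/\theta}$ with norm depending only on $\theta$ and $d=d(p)$, and are bijections of $\bR$ with the indicated inverses.
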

\begin{proof}Appealing to Corollary \ref{cor:inv} with $g(t) = {\rm sgn}(t) |t|^\theta $ and $g(t)={\rm sgn}(t) \left (e^{|t|} -1\right)$.
\end{proof}

\section{Applications}

\subsection{Commutator and quasi-commutator estimates}
We consider  commutator and quasi-commutator estimates  for operator $\theta$-H\"{o}lder functions,
 which complement  \cite[Theorem 11.7]{AP2} and \cite[Theorem 10.5]{AP3}.
The proof of the following corollary  is essentially the same as the implication in \cite[Lemma 2.4]{Ricard2} via Cayley transform (for similar results  for  Lipschitz estimates, see \cite[Theorem 2.2]{DDPS}, \cite[Theorem 6.1]{CPSZ15} and \cite[Theorem 10.1]{AP3}).

\begin{corollary}\label{cor:com2}
Let $0<p< \infty$ and $0<\theta <1$.
Let   $f\in S_{d,\theta}$.
Let  $E(0,\infty)$ be a fully symmetrically normed space.
Then, there exists a constant $C_{p,\theta }$ such  that for $X\in S(\cM,\tau)_{h}$ and $B\in \cM$, we have
$$\left\|  [f(X),B]\right\|_{E^{(p)}} \le C_{p,\theta} \left\|f \right\|_{S_{d,\theta}} \left\| \left| [X,B] \right|^\theta \right\|_{E^{(p)}} \left\|B\right\|_\infty^{1-\theta}. $$
\end{corollary}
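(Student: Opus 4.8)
The plan is to deduce the commutator estimate from the operator difference estimate already at our disposal — specifically from the hypothesis-free submajorisation \eqref{intro_subm} — by means of the Cayley transform, exactly as in the proof of \cite[Lemma~2.4]{Ricard2}. First I would reduce to the case of a \emph{skew-adjoint} $B$: writing $B=B_1+iB_2$ with $B_1,B_2$ self-adjoint, so that $iB_1,iB_2$ are skew-adjoint of norm $\le\left\|B\right\|_\infty$, and using $[f(X),B]=[f(X),B_1]+i[f(X),B_2]$ together with the quasi-triangle inequality for $\left\|\cdot\right\|_{E^{(p)}}$, reduces the general case to skew-adjoint $B$; here one also records that, since $X=X^{*}$ forces $[X,B^{*}]=-[X,B]^{*}$, each $[X,B_j]$ equals $\pm\tfrac12\big([X,B]\mp[X,B]^{*}\big)$, whence $\mu(2t;[X,B_j])\le\mu(t;[X,B])$ for all $t>0$ and therefore $\left\||[X,B_j]|^\theta\right\|_{E^{(p)}}\le c_p\left\||[X,B]|^\theta\right\|_{E^{(p)}}$ by boundedness of dilations on the fully symmetric space $E^{(p)}(0,\infty)$. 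For skew-adjoint $B$, homogeneity in $B$ reduces matters to $\left\|B\right\|_\infty\le1$: if $\left\|[f(X),B']\right\|_{E^{(p)}}\le c_{p,\theta}\left\|f\right\|_{S_{d,\theta}}\left\||[X,B']|^\theta\right\|_{E^{(p)}}$ holds for all skew-adjoint contractions $B'$, then applying it to $B'=B/\left\|B\right\|_\infty$ and using $[f(X),B]=\left\|B\right\|_\infty[f(X),B']$ and $\left\||[X,B]|^\theta\right\|_{E^{(p)}}=\left\|B\right\|_\infty^{\theta}\left\||[X,B']|^\theta\right\|_{E^{(p)}}$ produces precisely the factor $\left\|B\right\|_\infty^{1-\theta}$.

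So assume $B\in\cM$ is skew-adjoint with $\left\|B\right\|_\infty\le1$. Then $\mathbf{1}+B$ is invertible in $\cM$ with $\left\|(\mathbf{1}+B)^{-1}\right\|_\infty\le1$, so $u:=(\mathbf{1}-B)(\mathbf{1}+B)^{-1}\in\cM$ is unitary; moreover $\mathbf{1}+u=2(\mathbf{1}+B)^{-1}$, so $(\mathbf{1}+u)^{-1}=\tfrac12(\mathbf{1}+B)$ with $\left\|(\mathbf{1}+u)^{-1}\right\|_\infty\le1$. A direct computation with the Leibniz rule yields the two identities
$$[X,u]=-(\mathbf{1}+u)[X,B](\mathbf{1}+B)^{-1},\qquad [f(X),B]=-(\mathbf{1}+u)^{-1}[f(X),u](\mathbf{1}+B),$$
and, since $E^{(p)}(\cM,\tau)$ is a quasi-normed $\cM$-bimodule and $\left\|\mathbf{1}\pm u\right\|_\infty\le2$, $\left\|\mathbf{1}+B\right\|_\infty\le2$, these give
$$\left\||[X,u]|^\theta\right\|_{E^{(p)}}\le 2^{\theta}\left\||[X,B]|^\theta\right\|_{E^{(p)}},\qquad \left\|[f(X),B]\right\|_{E^{(p)}}\le 2\left\|[f(X),u]\right\|_{E^{(p)}}.$$

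The crucial step is to recognise the commutator with the unitary $u$ as an operator difference: since $f(u^{*}Xu)=u^{*}f(X)u$ for any unitary $u$ and any $X=X^{*}$, we have $[f(X),u]=u\big(f(u^{*}Xu)-f(X)\big)$ and hence $\left\|[f(X),u]\right\|_{E^{(p)}}=\left\|f(u^{*}Xu)-f(X)\right\|_{E^{(p)}}$. Applying the submajorisation \eqref{intro_subm} to the self-adjoint pair $u^{*}Xu,\,X\in S(\cM,\tau)_h$, and using that $E^{(p)}(\cM,\tau)$ respects submajorisation of $p$-th powers whenever $E(0,\infty)$ is fully symmetric, we obtain $\left\|f(u^{*}Xu)-f(X)\right\|_{E^{(p)}}\le C_{p,\theta}\left\|f\right\|_{S_{d,\theta}}\left\||u^{*}Xu-X|^\theta\right\|_{E^{(p)}}$; and since $u^{*}Xu-X=u^{*}[X,u]$ we have $\left\||u^{*}Xu-X|^\theta\right\|_{E^{(p)}}=\left\||[X,u]|^\theta\right\|_{E^{(p)}}$. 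Chaining these with the estimates of the previous paragraph gives $\left\|[f(X),B]\right\|_{E^{(p)}}\le 2^{1+\theta}C_{p,\theta}\left\|f\right\|_{S_{d,\theta}}\left\||[X,B]|^\theta\right\|_{E^{(p)}}$ for skew-adjoint contractions, and tracing back through the two reductions of the first paragraph produces the asserted inequality with the factor $\left\|B\right\|_\infty^{1-\theta}$.

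The analytic content is entirely carried by \eqref{intro_subm}, so what remains is bookkeeping; the points that need care are the verification of the two Cayley-transform identities above and the fact that every $\cM$-operator norm entering the argument — those of $\mathbf{1}\pm u$, $\mathbf{1}+B$, $(\mathbf{1}+B)^{-1}$ and $(\mathbf{1}+u)^{-1}$ — is bounded by an absolute constant once $\left\|B\right\|_\infty\le1$, together with keeping track of the powers of $\left\|B\right\|_\infty$ so that exactly $\left\|B\right\|_\infty^{1-\theta}$ survives the homogeneity step. One should also note that \eqref{intro_subm} must be invoked in its submajorisation form rather than the quasi-norm form of Theorem~\ref{th:mainE}, because a priori (assuming the right-hand side of the asserted inequality is finite) only $[X,B]\in E^{(\theta p)}(\cM,\tau)$ is available, and $E^{(\theta p)}$ need not be contained in $E^{(p)}$ when $\theta<1$.
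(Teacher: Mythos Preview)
Your proof is correct and follows essentially the same Cayley-transform route as the paper: the only cosmetic difference is that you reduce to \emph{skew-adjoint} $B$ with $u=(\mathbf{1}-B)(\mathbf{1}+B)^{-1}$, whereas the paper reduces to \emph{self-adjoint} $B$ with $U=(B-i)(B+i)^{-1}$, and the subsequent identities and norm bounds are the obvious translations of one another. Your closing remark --- that one should invoke the submajorisation \eqref{intro_subm} directly rather than Theorem~\ref{th:mainE} with its hypothesis $X-Y\in E^{(p)}(\cM,\tau)$ --- is a valid point of care that the paper glosses over.
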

\begin{proof}
By  homogeneity, it suffices to prove the case when  $\|B\|_\infty =1$.
Let  $K_q:=\max\{2^{\frac{1}{q}-1},1\}$,
  which is the modulus of concavity of the quasi-norm $\left\|\cdot \right \|_{E^{(q)}}$, $q>0$ \cite[page 8]{Kalton}.
  We denote by ${\rm re}(B) $ and ${\rm im }(B)$ the real part and the imaginary part of $B$, respectively.
Note
$$   \left\|  [f(X),B]\right\|_{E^{(p)}} \le K_p\left( \left\|  [f(X),{\rm re}(B)]\right\|_{E^{(p)}}  +   \left\|  [f(X),{\rm im}(B)]\right\|_{E^{(p)}} \right) $$
and
\begin{align*}
&\quad \left\| \left| [X,{\rm re}(B)] \right|^\theta \right\|_{E^{(p)}}, ~\left\| \left| [X,{\rm im}(B)] \right|^\theta \right\|_{E^{(p)}}\\
%=\left\| [X,{\rm im}(B)] \right\|_{E^{(\theta p)}}^\theta \\
 &\le ( \frac 1 2 K_{\theta p}\left\|  [X,B^*]  \right\|_{E^{(\theta p)}}
 + \frac 1 2 K_{\theta p}\left\|  [X,B]  \right\|_{E^{(\theta p)}})^\theta\\
&=  (  K_{\theta p}\left\|  [X,B ]  \right\|_{E^{(\theta p)}}
  )^\theta\\
  &=  K_{\theta p} ^\theta
  \left\|  \left|
  [X,B ] \right|^\theta \right\|_{E^{(  p)}},
\end{align*}
where we use the easy fact that  $$\mu([X,B])=\mu([X,B]^*) = \mu([ B^* , X] ) = \mu([X,B^*])$$ for the first equality.
We may assume that $B=B^*$.

%Using the $2\times 2$-trick with
%\begin{align*}
%\tilde{X}=\left(\begin{array}{cccccc}
%X & 0  \\
%0 &  X \\
%\end{array}\right)
%~\mbox{ and }~
%\tilde{B}=\left(\begin{array}{cccccc}
%0 & B  \\
%B^* &  0 \\
%\end{array}\right).
%\end{align*}

Next, we use the Cayley transform defined by
\begin{align}\label{eq:UB}
U= (B-i)(B+i)^{-1}, ~B= 2i (1-U)^{-1}  -i.
\end{align}
Clearly, $U$ is unitary.
The functional calculus together with the assumption that $\|B\|_\infty =1$ yields that
\begin{align}\label{eq:U}
\left\|(1-U)^{-1} \right\|_\infty \le \frac {1}{\sqrt{2}} \mbox{ and } \left\|(B+i)^{-1} \right\|_\infty  \le 1.
\end{align}
Hence, we have
\begin{align*}
\left\|
             [f(X),B]
\right\|_{E^{(  p)}}
&~=
\left\|
           f(X)B -Bf(X)
\right\|_{E^{(  p)}}\\
& \stackrel{\eqref{eq:UB}}{=}
 2 \left\|f(X)(1-U)^{-1}- (1-U)^{-1} f(X) \right\|_{E^{(  p)}}  \\
&~\le  2 \left\|(1-U)^{-1}\right\|_\infty^2  \left\|f(X)(1-U)- (1-U) f(X) \right\|_{E^{(  p)}} \\
&\stackrel{\eqref{eq:U}}{=}   \left\|f(X) U -  U f(X) \right\|_{E^{(  p)}}\\
&~=   \left\|U^* f(X) U -   f(X) \right\|_{E^{(  p)}} \\
&~=   \left\|f(U^*  XU)  -   f(X) \right\|_{E^{(  p)}} \\
&\stackrel{\eqref{eqmain}}{\le   }  C_{p,\theta} \left\|f \right\|_{S_{d,\theta}} \left\||U^*  XU  -   X |^\theta\right\|_{E^{(  p)}} \\
& \stackrel{\eqref{def:pcon}}{=}   C_{p,\theta} \left\|f \right\|_{S_{d,\theta}} \left\|   XU  -   U X  \right\|_{E^{( \theta p)}}^{\theta }.
\end{align*}
By \eqref{eq:UB}, we have
\begin{align*}
[X,U]&= [X, (B-i)(B+i)^{-1}] = X (B-i)(B+i)^{-1} -  (B-i)(B+i)^{-1} X\\
&= (B+i)^{-1} \Big((B+i) X (B-i)-  (B-i) X (B+i)\Big) (B+i)^{-1} .
\end{align*}
Hence,
\begin{align*}
&\quad \left\|
             [f(X),B]
\right\|_{E^{(  p)}} \\
&\le  C_{p,\theta} \left\|f \right\|_{S_{d,\theta}}  \left\|(B+i)^{-1} \right\|_\infty^{2\theta}  \left\|(B+i) X (B-i) -  (B-i) X(B+i) \right\|_{E^{(  \theta p)}}^\theta \\
&\stackrel{\eqref{eq:U}}{\le}  C_{p,\theta} \left\|f \right\|_{S_{d,\theta}}   \left\| \left|2 [X,B]\right|^\theta   \right \|_{E^{(  p)}}.
\end{align*}
\end{proof}
A further consequence may be obtained for quasi-commutator estimates.
 The following corollary  extends \cite[Proposition 5.1]{Ricard} and \cite[Corollary 3.2]{Kittaneh_K}.
\begin{corollary}\label{cor:com3}
Let $0<p<  \infty$ and $0<\theta <1$.
Let   $f\in S_{d,\theta}$.
Let  $E(0,\infty)$  be a fully symmetrically normed space.
Then, there exists a constant $C_{p,\theta }$ such  that for $A,B\in S(\cM,\tau)_{h}$ and $R\in \cM$, we have
$$\left\|   f(A)R  - R f(B)\right\|_{E^{(p)}} \le C_{p,\theta} \left\|f \right\|_{S_{d,\theta}}  \left\| \left| AR  - R B \right|^\theta \right\|_{E^{(p)}} \left\|R \right\|_\infty^{1-\theta}. $$
\end{corollary}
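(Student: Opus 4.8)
The plan is to reduce this to the commutator estimate of Corollary~\ref{cor:com2} by a standard $2\times 2$ amplification. Work in the semifinite von Neumann algebra $\cN:=\cM\otimes M_2(\bC)$ equipped with the trace $\tilde\tau:=\tau\otimes\tr$, and put
$$\tilde X:=\begin{pmatrix}A&0\\0&B\end{pmatrix}\in S(\cN,\tilde\tau)_h,\qquad \tilde R:=\begin{pmatrix}0&R\\0&0\end{pmatrix}\in\cN.$$
(Here $\tilde X$ is $\tilde\tau$-measurable since $\tilde\tau(E^{|\tilde X|}(\lambda,\infty))=\tau(E^{|A|}(\lambda,\infty))+\tau(E^{|B|}(\lambda,\infty))\to0$ as $\lambda\to\infty$.) Because $f(\tilde X)=\begin{pmatrix}f(A)&0\\0&f(B)\end{pmatrix}$, one computes directly that
$$[f(\tilde X),\tilde R]=\begin{pmatrix}0&f(A)R-Rf(B)\\0&0\end{pmatrix},\qquad [\tilde X,\tilde R]=\begin{pmatrix}0&AR-RB\\0&0\end{pmatrix},$$
and $\|\tilde R\|_\infty=\|R\|_\infty$.

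Next I would record the elementary observation that for any $Z\in S(\cM,\tau)$ one has $\left|\begin{pmatrix}0&Z\\0&0\end{pmatrix}\right|=\begin{pmatrix}0&0\\0&|Z|\end{pmatrix}$, whence, computing distribution functions with respect to $\tilde\tau$, $\mu\left(\begin{pmatrix}0&Z\\0&0\end{pmatrix}\right)=\mu(Z)$ and therefore $\left\|\begin{pmatrix}0&Z\\0&0\end{pmatrix}\right\|_{E^{(p)}(\cN)}=\|Z\|_{E^{(p)}(\cM)}$. Applying this with $Z=f(A)R-Rf(B)$, and with $Z=AR-RB$ together with the identity $\left|[\tilde X,\tilde R]\right|^\theta=\begin{pmatrix}0&0\\0&|AR-RB|^\theta\end{pmatrix}$, identifies all three norms appearing below with the corresponding norms over $\cM$.

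Finally I would invoke Corollary~\ref{cor:com2} in the algebra $\cN$ with the self-adjoint operator $\tilde X\in S(\cN,\tilde\tau)_h$, the element $\tilde R\in\cN$, the function $f\in S_{d,\theta}$ with $d=d(p)$, and the same function space $E(0,\infty)$; since the constant there depends only on $p$ and $\theta$, it is insensitive to the passage from $\cM$ to $\cN$. This yields
$$\left\|[f(\tilde X),\tilde R]\right\|_{E^{(p)}(\cN)}\le C_{p,\theta}\|f\|_{S_{d,\theta}}\left\|\left|[\tilde X,\tilde R]\right|^\theta\right\|_{E^{(p)}(\cN)}\|\tilde R\|_\infty^{1-\theta},$$
and rewriting each factor via the identifications above gives precisely
$$\|f(A)R-Rf(B)\|_{E^{(p)}}\le C_{p,\theta}\|f\|_{S_{d,\theta}}\left\||AR-RB|^\theta\right\|_{E^{(p)}}\|R\|_\infty^{1-\theta}.$$
There is no substantial obstacle; the only points requiring a line of verification are that the $\tfrac1p$-th power space $E^{(p)}$ is legitimately defined over the semifinite algebra $\cN$ and that the generalised singular value function is unchanged upon moving a measurable operator to an off-diagonal corner of $\cN$. (If $AR-RB\notin E^{(p)}(\cM)$ the right-hand side is $+\infty$ and the inequality is vacuous, so no a priori integrability assumption is needed.)
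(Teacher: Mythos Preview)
Your argument is correct. Both your proof and the paper's use a $2\times 2$ amplification to reduce to Corollary~\ref{cor:com2}, but the reductions differ. The paper first isolates the special case where $A=U^*BU$ are unitarily equivalent (so that $f(A)R-Rf(B)=U^*[f(B),UR]$ becomes a genuine commutator), and only then passes to $\mathbb{M}_2\otimes\cM$ with \emph{diagonal} data $\tilde A=\mathrm{diag}(A,B)$, $\tilde B=\mathrm{diag}(B,A)$, $\tilde R=\mathrm{diag}(R,R^*)$, the point being that $\tilde A$ and $\tilde B$ are unitarily equivalent via the flip. This two-step route picks up the concavity constants $K_p,K_{\theta p}$ when comparing $\|\cdot\|_{E^{(p)}}$ on the diagonal with the entrywise norms. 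Your single-step route, with a single self-adjoint $\tilde X=\mathrm{diag}(A,B)$ and an \emph{off-diagonal} $\tilde R$, is cleaner: the identity $\mu\bigl(\begin{smallmatrix}0&Z\\0&0\end{smallmatrix}\bigr)=\mu(Z)$ gives exact equality of norms, so no extra constants are introduced beyond the $C_{p,\theta}$ from Corollary~\ref{cor:com2}. The only mild cost is that you must observe the singular-value identity for off-diagonal corners, which you handle correctly.
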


\begin{proof}
By  homogeneity, it suffices to consider  the special case when  $\|R\|_\infty =1$.
Let us first assume that  the special case when $A,B$ are unitarily equivalent in $\cM$, i.e., $A=U^*BU$ for a unitary operator $U\in \cM$ and
we shall prove that
\begin{align}\label{eq:unit}
\left\|f(A) R -Rf(B)\right\|_{E^{(  p)}} &= \left\|U^* f(B)UR -Rf(B)\right\|_{E^{(  p)}} \nonumber \\
&\le  C_{p,\theta} \left\|f \right\|_{S_{d,\theta}}  \left\| U^* BUR -RB   \right\|_{E^{( \theta  p)}}^\theta.
\end{align}

By Corollary \ref{cor:com2}, we have
\begin{align*}
\left\|U^* f(B)UR -Rf(B)\right\|_{E^{(  p)}}& = \left\| f(B)UR -  U Rf(B)\right\|_{E^{(  p)}} \\
&\le C_{p,\theta} \left\|f \right\|_{S_{d,\theta}}  \left\|  BUR -  URB \right\|_{E^{(\theta   p)}}^\theta.
\end{align*}
Hence, we obtain the validity of \eqref{eq:unit}.

Now, we consider the case of arbitrary self-adjoint operators $A,B\in S(\cM,\tau)$.
%Without loss of generality, we assume that $\cM$ is atomless
%(see \cite[Lemma 2.3.18]{LSZ}) and $\tau(1)=\infty$
%\footnote{For the case when $\tau(1) <\infty$,  we only need to replace $F(0,\infty)$ with $E(0,\tau(1))$.}.
%By \cite[Theorem 3.9]{HLS2017} (see also \cite[Theorem 2.5.3]{LSZ}),
%for any symmetrically quasi-normed operator space $F(\cM,\tau)$,
%$$F(0,\infty) := \{x\in S(0,\infty) : \mu(x) =\mu(X)\mbox{ for some } X\in F(\cM,\tau)\}$$
%is a symmetrically quasi-normed function space.
We consider the algebra $\tilde{\cM}= \bM_2 \otimes \cM$ equipped with the trace $\tilde{\tau}=   Tr\otimes \tau$, where $Tr$ is the standard trace on $\bM_2$.
We consider $E(\tilde{\cM},\tilde{\tau})^{(p)}$ instead of $E(\cM,\tau)^{(p)}$.
For any $X,Y \in E(\cM,\tau)_h$, we have
$$\tilde{Z}: = \left(\begin{array}{cccccc}
X & 0  \\
0 &  Y \\
\end{array}\right) \in E(\tilde{\cM},\tilde{\tau})$$
with
\begin{align}\label{tilde}
\left\|X\right\|_ {E^{(p)}} \le  \left\|\tilde{Z} \right\|_{E^{(p)}} =\left\|\mu(X\oplus Y)\right\|_{E^{(p)}} \le K_{p} \left(\left\|X\right\|_{E^{(p)}} +\left\| Y\right\|_{E^{(p)}}  \right)  ,
\end{align}
%where $C_F$ is the  modulus of concavity of the quasi-norm $\left\|\cdot \right\|_F$ (see e.g. \cite[Lemma 6]{Sukochev}, \cite[Proposition 2 (c)]{HM} or \cite[(3.2)]{HLS2017}).
where $K_{p}  $ stands for the modulus of concavity of $E^{(p)} (\cM,\tau)$.
Put
\begin{align*}
\tilde{A}=\left(\begin{array}{cccccc}
A & 0  \\
0 &  B \\
\end{array}\right),
\tilde{B}=\left(\begin{array}{cccccc}
B & 0  \\
0  & A \\
\end{array}\right),
~\mbox{ and }~
\tilde{R}=
\left(\begin{array}{cccccc}
R & 0  \\
0 &  R^* \\
\end{array}\right).
\end{align*}
Then, $ \tilde{A}$ and $\tilde{B}$ are unitarily equivalent in $\tilde{\cM}$.
We have
\begin{align*}
f(\tilde{A}) \tilde{R}=\left(\begin{array}{cccccc}
f(A ) R & 0  \\
0 &  f(B)  R^*\\
\end{array}\right)~ \mbox{ and }~
 \tilde{R} f(\tilde{B})  =\left(\begin{array}{cccccc}
 R f(B) & 0  \\
0  &   R^*  f(A) \\
\end{array}\right).
\end{align*}
Hence, by \eqref{tilde}, we have
$$\left\| f(A)R  - R f(B)\right\|_{E^{(  p)}}\le \left\| f(\tilde{A}) \tilde{R}-  \tilde{R} f(\tilde{B}) \right\|_{E^{(  p)}}    $$
and (noting that $A$ and $B$ are self-adjoint and therefore $(AR-RB)^*=R^*A -BR^*$)
\begin{align*}
\left\| \tilde{A} \tilde{R}-  \tilde{R} \tilde{B} \right\|_{E^{(  \theta p)}}&\le   K_{ \theta  p}  \left( \left\| AR  - RB \right\|_{E^{(  \theta  p)}}  +  \left\|BR^*  -R^*A \right\|_{E^{(  \theta  p)}}  \right)\\
&= 2 K_{\theta  p}  \left\| AR  - RB \right\|_{E^{(  \theta  p)}} ,
\end{align*}
which together with  \eqref{eq:unit} implies that
\begin{align*}\left\| f(A)R  - R f(B)\right\|_{E^{(  p)}}
&\le \left\| f(\tilde{A}) \tilde{R}-  \tilde{R} f(\tilde{B}) \right\|_{E^{(  p)}}
\stackrel{\eqref{eq:unit}}{\le} C_{p,\theta} \left\|f \right\|_{S_{d,\theta}}  \left\| \tilde{A} \tilde{R}-  \tilde{R} \tilde{B} \right\|_{E^{( \theta p)}}^\theta\\
&\le ( 2 K_{ \theta  p})^\theta   C_{p,\theta} \left\|f \right\|_{S_{d,\theta}}   \left\|      AR  - RB  \right\|_{E^{(  \theta p)}}^\theta  .\end{align*}
%where $C_{f,p}$ is the  one in \eqref{eq:unit} multiplied by  a constant.
This  completes the proof.
\end{proof}

\begin{remark}
The proof in Corollary \ref{cor:com2} and Corollary \ref{cor:com3} indeed show that,  in the setting of a symmetrically quasi-normed  space,
 the commutator estimates, quasi-commutator estimates and the difference estimates for  operator $\theta$-H\"older functions are equivalent.
\end{remark}
\begin{remark}\label{7.4}
In Corollary \ref{cor:com3}, no restrictions on the supports of $f$ are needed. That is, we can consider  functions  having possibly  unbounded supports rather than functions having  compact supports as in \cite[Theorem 2.4]{Sobolev}.

Moreover,  in \cite[Theorem 2.4]{Sobolev},  the estimate of the quasi-norm of $[f(X),B]$ is obtained in terms of $\left|[X,B]\right|^{\sigma}$ only when $\sigma $ is strictly less than $\theta $, $\theta\in (0,1)$.
Moreover, the constant obtained in \cite[page 168]{Sobolev} goes to infinity when we take  $\sigma \uparrow\theta$.
However, in Corollary \ref{cor:com3}, we obtain the estimate in terms of $\left|[X,B]\right|^{\theta}$.

%Comparing with the result by Sobolev \cite{Sobolev},  the choice of the integer $d(p)$ in our paper is sharp.
Note that the integer $d$ obtained in \cite[Theorem 2.4]{Sobolev} satisfies that $d >  \frac1p+\sigma    \in   (\frac1p, \frac1p+1) $, $p\in (0,1]$  and $\sigma \in (0,\theta)$ (one can take $d = \lfloor \frac1p+\sigma \rfloor +1 $), while the integer
$d(p)$ in our paper  is the minimal integer such that  $d(p) >\frac1p+2$ (see Section \ref{bounded}).
That is, $d(p)=  \lfloor\frac1p +2 \rfloor  +1 $.
There exist $0<p\le 1$,  $\theta $ and $\sigma$ such that $d(p)-1=  \lfloor\frac1p +2 \rfloor    = \lfloor \frac1p + \sigma \rfloor+1= \lfloor \frac1p + \theta \rfloor +1 $.
%Hence, if we take a smaller $d(p)$ (e.g. let $d(p)= \lceil \frac1p +2  \rceil  -1$), then the constant obtained in \cite[page 168]{Sobolev} goes to infinity when we take  $\sigma \uparrow\theta$.
In other words, $d(p)$ is the minimal integer such that $d(p)>d= \lfloor \frac1p + \sigma \rfloor+1  $ but $d(p)$ does not
depend on $\sigma$ and $\theta$.

\end{remark}

\subsection{Estimates for absolute value map}
The estimates of the distance between the absolute values of two operators $A$ and $B$ have been obtained by several mathematicians (see e.g. \cite{Bhatia1988,Kosaki1984,Kittaneh_K}).
In particular, Kosaki \cite{Kosaki1984} proved that if $A,B \in (\cN
_*, \left\|\cdot \right\|_1)$ ($\cN_*$ stands for the predual of a general von Neumann algebra $\cN$), then
$$  \left\| |A|-|B|  \right\|_1  \le 2^{1/2}  \left(  \|A+B\|_1 \|A-B\|_1   \right)  .$$
This result was later extended by Kittaneh and Kosaki \cite{Kittaneh_K} to the von Neumann-Schatten $p$-class, $p\ge 2$, in $B(\cH)$:
$$  \left\| |A|-|B|   \right\|_p   \le \left( \left\| A+B \right\|_p  \left\| A-B \right \|_p  \right)^{1/2},~ A,B\in B(\cH).
$$
The case when $1\le p\le 2$ was proved by Bhatia \cite{Bhatia1988}:
$$  \left\| |A|-|B|   \right\|_p   \le  2^{1/p-1/2 }\left( \left\| A+B\right\|_p  \left\| A-B\right\|_p  \right)^{1/2},~ A,B\in B(\cH).
$$
 Bhatia \cite{Bhatia1988} also proved the estimates for an arbitrary  fully symmetric norm $\left\|\cdot \right\|$ on $B(\cH)$:
 $$  \left\| |A|-|B|   \right\|    \le  2^{ 1/2 }\left( \left\|A+B\right\|  \left\|A-B\right\|  \right)^{1/2},~ A,B\in B(\cH) .
$$
In this subsection,
we consider the quasi-norm estimates of the absolute value map in $S(\cM,\tau)$, which extends the results in \cite{Bhatia1988,Kosaki1984,Kittaneh_K}.

\begin{corollary}
Let $E(0,\infty)$ be a fully symmetrically normed space and let  $p\in (0,\infty)$.
There exists a constant $C_p$ such that
$$\left\| \left| A\right|-\left| B \right| \right\|_{E^{(p)}}  \le C_{p} \left( \left\|  A+B   \right\|_{E^{(p)}} \left\|  A-B \right\|_{E^{(p)}} \right)^{\frac12}, ~A,B\in S(\cM,\tau).$$
\end{corollary}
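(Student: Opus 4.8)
The plan is to reduce the absolute value estimate to the operator $\theta$-Hölder inequality \eqref{eqmain} applied to the function $f(t) = |t|^{1/2} \in S_{\infty,1/2}$, exactly as the classical arguments of Kosaki, Kittaneh--Kosaki and Bhatia reduce the problem to the Powers--Størmer-type inequality via a $2\times 2$ matrix trick. First I would observe that for $A,B \in S(\cM,\tau)$, consider in $\widetilde{\cM} = \bM_2 \otimes \cM$ (with $\widetilde\tau = \mathrm{Tr}\otimes\tau$) the self-adjoint operators
\begin{align*}
X = \begin{pmatrix} 0 & A \\ A^* & 0 \end{pmatrix}, \qquad Y = \begin{pmatrix} 0 & B \\ B^* & 0 \end{pmatrix}.
\end{align*}
A direct computation with the polar decomposition of $A$ (resp. $B$) shows that $X^2 = \mathrm{diag}(AA^*, A^*A)$, and hence $|X| = \mathrm{diag}(|A^*|, |A|)$, so that $\mu(|X| - |Y|) = \mu\big((|A^*|-|B^*|) \oplus (|A|-|B|)\big)$; in particular $\||A|-|B|\|_{E^{(p)}} \le \||X|-|Y|\|_{E^{(p)}}$, the norm of the $2\times2$ dilation controlling the corner exactly as in \eqref{tilde}. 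On the other hand $X - Y = \begin{pmatrix} 0 & A-B \\ (A-B)^* & 0 \end{pmatrix}$, so $|X-Y| = \mathrm{diag}(|(A-B)^*|, |A-B|)$ and $\mu(|X-Y|) = \mu\big(|(A-B)^*|\oplus|A-B|\big)$.

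Next I would apply Theorem \ref{th:mainE} (equivalently \eqref{eqmain}) with $f(t) = |t|^{1/2}$, $\theta = 1/2$, and the fully symmetric space $E(0,\infty)$, noting $f(X) = |X|^{1/2}$, $f(Y) = |Y|^{1/2}$. This gives a constant $C_p$ with
\begin{align*}
\big\| |X|^{1/2} - |Y|^{1/2} \big\|_{E^{(p)}} \le C_p \|f\|_{S_{d,1/2}} \, \big\| |X-Y|^{1/2} \big\|_{E^{(p)}}.
\end{align*}
At this point the left-hand side still carries a square root, so the strategy is to run the matrix trick a second time: rather than $X$ itself, feed into \eqref{eqmain} the operators whose \emph{squares} are the diagonal blocks $AA^*, A^*A$ and $BB^*, B^*B$ — i.e. repeat the $2\times2$ off-diagonal construction, this time starting from $A$ and $B$ directly (not their absolute values) and using that $|X| - |Y|$ for the off-diagonal lift of a pair equals the absolute-value difference of that pair. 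Concretely, setting $Z = \mathrm{diag}(A,A^*)$ does not help directly; instead one uses the standard identity that for the off-diagonal $X$ above, $|X|$ is the off-diagonal lift of \emph{nothing} but rather $\mathrm{diag}(|A^*|,|A|)$, and one iterates by noting $|X|^{1/2}$ relates to $|A|^{1/2}$ and that $|X-Y|^{1/2}$ can be rewritten using a further $2\times2$ embedding of $A-B$ and $A^*-B^*$. The cleanest route: apply \eqref{eqmain} to the pair $X' = \begin{pmatrix} 0 & A \\ A^* & 0\end{pmatrix}$, $Y' = \begin{pmatrix} 0 & B \\ B^* & 0 \end{pmatrix}$ with $f(t)=|t|^{1/2}$ as above, then observe $\big\||X-Y|^{1/2}\big\|_{E^{(p)}} = \big\| (X-Y) \big\|_{E^{(\frac p2)}}^{1/2}$ by \eqref{def:pcon}, and finally bound $\|X-Y\|_{E^{(p/2)}}$ in terms of $\|A-B\|_{E^{(p/2)}}$ — these are the same quantity up to the modulus-of-concavity factor from \eqref{tilde} since $\mu(X-Y) = \mu((A-B)\oplus(A-B)^*)$. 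Combining, $\||A|-|B|\|_{E^{(p)}} \le C_p' \|A-B\|_{E^{(p/2)}}^{1/2} \cdot (\text{norm of }|X|^{1/2})$, and the same bound applied with the roles of the difference replaced by the sum via the elementary operator inequality $|X|^2 = X^2 \le \tfrac12\big((X+Y)^2 + (X-Y)^2\big) + \ldots$ — more efficiently, use $\||X|^{1/2}\|_{E^{(p)}} = \|X\|_{E^{(p/2)}}^{1/2}$ and symmetrize in the standard way to extract $\big(\|A+B\|_{E^{(p)}}\|A-B\|_{E^{(p)}}\big)^{1/2}$ rather than a single factor.

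The honest way to get the \emph{product} $\big(\|A+B\|\,\|A-B\|\big)^{1/2}$ on the right (rather than just $\|A-B\|^{1/2}$-type bounds) is the classical observation that $|X-Y|^{1/2}$ satisfies $\big\||X-Y|^{1/2}\big\|_{E^{(p)}}^2 = \big\||X-Y|\big\|_{E^{(p/2)}} \le K \,\|X-Y\|_{E^{(p/2)}}$, and one bounds $\|X-Y\|_{E^{(p/2)}}$ by interpolating — but here the slick trick is to note that in the off-diagonal lift, $|X-Y|$ is unitarily related to a function of $A-B$, while the \emph{sum} $A+B$ enters because $f(X) - f(Y) = |X|^{1/2} - |Y|^{1/2}$ and $|X|, |Y|$ are built from $AA^*, BB^*$ which in turn are controlled by $(A+B)$ and $(A-B)$ through $AA^* - BB^* = \tfrac12(A+B)(A-B)^* + \tfrac12(A-B)(A+B)^*$. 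Feeding this factorization through the Hölder inequality for $E^{(p)}$ (which holds since $\|\cdot\|_{E^{(p)}}$ is a quasi-norm and $E$ is fully symmetric, so a Hölder-type inequality $\|ST\|_{E^{(p)}} \le \|S\|_{E^{(2p)}}\|T\|_{E^{(2p)}}$ applies — cf. the $p$-convexity discussion after \eqref{def:pcon}) yields the product form. I expect the main obstacle to be bookkeeping the various modulus-of-concavity constants $K_q$ across the repeated $2\times2$ dilations and making sure the two applications of \eqref{eqmain} (once with $\theta=1/2$ on the $2\times2$ algebra, once implicitly through the power-space identity \eqref{def:pcon}) are assembled so that the final exponent on $\|A-B\|$ and $\|A+B\|$ is exactly $1/2$ each; the operator-theoretic content is entirely supplied by Theorem \ref{th:mainE} applied to $t\mapsto|t|^{1/2}$, and no new estimate is required.
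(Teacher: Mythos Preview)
Your proposal contains the right ingredients but assembles them incorrectly, and the $2\times 2$ matrix trick is a red herring that leads you astray. The concrete gap is this: when you apply Theorem~\ref{th:mainE} with $f(t)=|t|^{1/2}$ to the self-adjoint operators $X$ and $Y$, the left-hand side is $\||X|^{1/2}-|Y|^{1/2}\|_{E^{(p)}}$, not $\||X|-|Y|\|_{E^{(p)}}$. You recognise this (``the left-hand side still carries a square root'') but none of the subsequent attempts to repair it --- iterating the matrix trick, writing $Z=\mathrm{diag}(A,A^*)$, invoking $\||X|^{1/2}\|_{E^{(p)}}=\|X\|_{E^{(p/2)}}^{1/2}$ --- actually produce $|X|-|Y|$ on the left. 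The proposal never closes this loop; the final paragraph hand-waves at ``bookkeeping'' but the missing step is not bookkeeping, it is the basic choice of which operators to plug into the theorem.

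The paper's proof avoids the matrix lift entirely. Since $|A|^2=A^*A$ and $|B|^2=B^*B$ are already positive (hence self-adjoint) regardless of whether $A,B$ are, one applies Theorem~\ref{th:mainE} with $f(t)=|t|^{1/2}$ directly to $|A|^2$ and $|B|^2$: then $f(|A|^2)=|A|$, $f(|B|^2)=|B|$, and the left side is exactly $\||A|-|B|\|_{E^{(p)}}$, bounded by $C_p\,\||A^*A-B^*B|^{1/2}\|_{E^{(p)}}$. Now the factorisation you correctly identify, $A^*A-B^*B=\tfrac12(A+B)^*(A-B)+\tfrac12(A-B)^*(A+B)$, combined with operator monotonicity of $t\mapsto t^{1/2}$, the $p$-triangle inequality in $E^{(p/2)}$, and the H\"older-type inequality $\|ST\|_{E^{(p/2)}}\le\|S\|_{E^{(p)}}\|T\|_{E^{(p)}}$ for symmetric spaces, produces the product $(\|A+B\|_{E^{(p)}}\|A-B\|_{E^{(p)}})^{1/2}$ directly. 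Your proposal eventually names every one of these pieces; the fix is simply to feed $|A|^2,|B|^2$ (rather than the off-diagonal $X,Y$) into Theorem~\ref{th:mainE}, after which the rest of your sketch goes through.
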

\begin{proof}
Without loss of generality, we may assume that $0< p\le 1$.
Applying Theorem \ref{th:mainE} to $f(t)= |t|^{\frac 12}, ~t\in \bR$, $|A|^2$ and $|B|^2$, we obtain that there exists a constant $C_p$ such that
\begin{align}\label{final:abs1}
\left\| |A|-|B| \right\|_{E^{(p)}}
\le
C_{p} \left\|  \left| |A|^2- |B|^2\right|^{\frac12}   \right\|_{E^{(p)}} , ~A,B\in S(\cM,\tau).
\end{align}
By \cite[Lemma  2.3.15]{LSZ}, there exists partial isometries $U$ and $V$  such that
%$$ \left| |A|^2- |B|^2  \right|  \le U |A|^2 U^* + V|B|^2V^*.  $$
%Hence,
\begin{align*}
 \left| |A|^2 -|B|^2 \right| =\left|
  \frac{2 A^*A   -2B^*B}{2}
\right| &=
\left|
  \frac{(A+B)^*(A-B)}{2}
+
  \frac{(A-B)^*(A+B)}{2}\right|\\
  &\le
 U\left|  \frac{(A+B)^*(A-B)}{2} \right|U^*
+
V\left|  \frac{(A-B)^*(A+B)}{2} \right|V^*
.
\end{align*}
By the monotonicity of function $t\mapsto t^{1/2}$, $t\in \bR^+$ (see e.g. \cite{DD1995}), we have \begin{align*}
\left| |A|^2 -|B|^2 \right|^{\frac 1 2}  \le
\left(
U\left|  \frac{(A+B)^*(A-B)}{2} \right|U^*
+
V\left|  \frac{(A-B)^*(A+B)}{2} \right|V^*
\right)^{\frac12}.
\end{align*}
Hence,
\begin{align}\label{final1}
&\quad
\left\| \left| |A|^2 -|B|^2 \right|^{\frac 1 2} \right\|_{E^{(p)}}^2 \nonumber\\
 &\le
 \left\|
\left(
U\left|  \frac{(A+B)^*(A-B)}{2} \right|U^*
+
V\left|  \frac{(A-B)^*(A+B)}{2} \right|V^*
\right)^{\frac12}
\right\|_{E^{(p)}} ^2 \nonumber \\
&=
 \left\|
\left(
U\left|  \frac{(A+B)^*(A-B)}{2} \right|U^*
+
V\left|  \frac{(A-B)^*(A+B)}{2} \right|V^*
\right)
\right\|_{E^{(p/2)}}  \nonumber \\
&
\le
\left( \left\|
 \frac{(A+B)^*(A-B)}{2}
\right\|_{E^{(p/2)}}^{p/2}
+
\left\|
 \frac{(A-B)^*(A+B)}{2}
\right\|_{E^{(p/2 )}}^{p/2}
\right)^{2/p}.
\end{align}
Recall that
 \cite[Theorem 2.2]{DD95} (see also \cite[Corollary 1.13]{Hiai})
 $$ \mu(XY)^{p/2} \prec \prec  \mu(X)^{p/2} \mu(Y)^{p/2},~X,Y\in S(\cM,\tau)  . $$
By \cite[Theorem 1]{Sukochev2016} (see also \cite{DykS}. Note that the notation  of $E^{(p)}$ in \cite{Sukochev2016} is different from that in our paper), we have
\begin{align*}
\left\|
 (A+B)^*(A-B)
\right\|_{E^{(p/2)}}^{p/2} &=
\left\|
| (A+B)^*(A-B)|^{p/2}
\right\|_{E } \\
&\le
\left\|
 \mu(A+B)^{p/2} \mu(A-B)^{p/2}
\right\|_{E } \\
&\le  \left\|
\mu(A+B )^{p/2}
\right\|_{E^{(2)} } \left\|
\mu( A-B )^{p/2}
\right\|_{E^{(2)} }\\
&= \left\|
A+B
\right\|_{E^{(p)} }^{p/2}  \left\|
 A-B
\right\|_{E^{(p )} }^{p/2},
\end{align*}
which together with \eqref{final:abs1} and \eqref{final1} completes the proof.
\end{proof}

{\bf Acknowledgements}
The final version of this paper was completed while the first and the third
named authors were visiting Central South University in Changsha, during the January of 2019.
They gratefully acknowledge Yong Jiao and Dejian Zhou, for their
kind hospitality.

The first author acknowledges the support of University International Postgraduate Award (UIPA).
The   second   author was supported by the Australian Research Council  (FL170100052).
The third author was partly funded by a UNSW Scientia Fellowship.
The authors would like to thank Edward McDonald and Xiao Xiong for helpful discussions.
%We would like to thank Dostilek Dauitbek for helpful discussion.
%%%%%%%%%%%%%%%%%%%%%%%%%%%%%%%%%%%%%%%%%%%%%%%%%%%%%%%%%%%%%%
%%%%%%%%%%%%%          thebibliography          %%%%%%%%%%%%%%
%%%%%%%%%%%%%%%%%%%%%%%%%%%%%%%%%%%%%%%%%%%%%%%%%%%%%%%%%%%%%%

\affiliationone{
   J. Huang, F. Sukochev and D. Zanin\\
   School of Mathematics and Statistics,\\
    University of New South Wales,\\
     Kensington, 2052, NSW,
   Australia
   \email{jinghao.huang@unsw.edu.au\\
   f.sukochev@unsw.edu.au\\
   d.zanin@unsw.edu.au}}

\end{document}